\newtheorem{theorem}{Theorem}
\newtheorem{lemma}{Lemma}
\newtheorem{coro}{Corollary}
\newtheorem{remark}{Remark}
\newtheorem{assumption}{Assumption}
\newtheorem{proposition}{Proposition}
\newtheorem*{proof}{Proof}
\newcommand{\norm}[1]{\left\Vert#1\right\Vert}
\newcommand{\Mnorm}[1]{\left\Vert#1\right\Vert}
\newcommand{\R}{\mathbb{R}}
\newcommand{\bE}{\mathbb{E}}
\newcommand{\B}{\mathbb{B}}
\newcommand{\bN}{\mathbb{N}}
\newcommand{\cF}{\mathcal{F}}
\newcommand{\cK}{\mathcal{K}}
\newcommand{\cA}{\mathcal{A}}
\newcommand{\cC}{\mathcal{C}}
\newcommand{\cL}{\mathcal{L}}
\newcommand{\cB}{\mathcal{B}}
\newcommand{\cT}{\mathcal{T}}
\newcommand{\cI}{\mathcal{I}}
\newcommand{\dist}{\operatorname{dist}}
\newcommand{\dom}{\operatorname{dom}}
\newcommand{\trace}{\operatorname{trace}}
\def\<#1,#2>{\langle #1,#2\rangle}
\algrenewcommand\algorithmicrequire{\textbf{Parameters:}}
\algrenewcommand\algorithmicensure{\textbf{Initialize:}}
\begin{document}
	
	\title{An inexact proximal augmented Lagrangian framework  with arbitrary linearly convergent inner solver for  composite convex optimization}
	\author[]{Fei Li\thanks{\textit{Email:} \href{mailto:lifei16@connect.hku.hk}{lifei16@connect.hku.hk}. The author was supported by Hong Kong PhD Fellowship Scheme No. PF15-16399. }  }
	\author[]{Zheng Qu\thanks{\textit{Email:} \href{mailto:zhengqu@maths.hku.hk}{zhengqu@maths.hku.hk}.  The author was supported by Early Career Scheme from Hong Kong Research Grants Council No. 27302016. The computations were performed using research computing facilities offered by Information Technology Services, the University of Hong Kong.}}

	\affil[]{Department of Mathematics\\ The University of Hong Kong}

	\maketitle

	\begin{abstract}
		We propose an inexact proximal augmented Lagrangian  framework with explicit inner problem termination rule for composite convex optimization problems. We consider arbitrary linearly convergent inner solver including in particular stochastic algorithms,  making the resulting framework more scalable facing the ever-increasing problem dimension.
		Each subproblem is solved inexactly with an explicit and self-adaptive stopping criterion, without requiring to set an a priori target accuracy. 
		When the primal and dual domain are bounded, our method achieves $O(1/\sqrt{\epsilon})$ and $O(1/{\epsilon})$  complexity bound in terms of number of inner solver iterations, respectively for the strongly convex and non-strongly convex case. Without the boundedness assumption, only logarithm terms need to be added and the above two complexity bounds increase  respectively to $\tilde O(1/\sqrt{\epsilon})$ and $\tilde O(1/{\epsilon})$, which hold both for obtaining $\epsilon$-optimal and $\epsilon$-KKT solution. Within the  general framework that we propose, we also obtain $\tilde O(1/{\epsilon})$ and $\tilde O(1/{\epsilon^2})$ complexity bounds  under relative smoothness assumption on the differentiable component of the objective function. We show through theoretical analysis as well as numerical experiments the computational speedup possibly achieved by the use of randomized inner solvers for large-scale problems.

	\end{abstract}
	
	\section{Introduction}\label{sec:intro}

	We consider the following optimization problem:
	\begin{align}
		\label{prime}
		\min_{x\in \mathbb{R}^n} ~~& f(x)+ g(x)+ h_1(p_1(x))+h_2(p_2(x)).
	\end{align} Here $g:\R^{n}\rightarrow \R\cup\{+\infty\}$, $h_1:\R^{d_1}\rightarrow\R$   are proper, convex and closed functions.  The function
	$h_2:\R^{d_2}\rightarrow \R\cup\{+\infty\}$ is the indicator function of a convex and closed set $\cK\subset\R^{d_2}$: \begin{equation}\label{eq:h2}
	h_2(u_2)= \left\{\begin{array}{ll}0 & \mathrm{if~} u_2\in \cK \\
	+\infty & \mathrm{otherwise}
	\end{array}\right. 
	\end{equation}
	The function $f:\R^n\rightarrow \R\cup\{+\infty\}$ is  convex and differentiable on an open set containing $\dom(g)$. The functions $p_1:\R^n\rightarrow \R^{d_1}$ and $p_2: \R^n\rightarrow \R^{d_2}$ are differentiable. In addition, we assume that $g, h_1, h_2$ are \textit{simple} functions, in the sense that their proximal operator are easily computable. With some other standard assumptions stated in the later discussion, the model that we consider covers a wide range of optimization problems.
	As an example, the following linearly constrained convex optimization problem
	\begin{align}
		\label{primefff}
		&\min_{x\in \mathbb{R}^n} ~~~ f(x)+ g(x)\\
		&\enspace s.t. \qquad Ax=b \notag
	\end{align}
	is a special case of~\eqref{prime} by letting $h_1\equiv 0$, $\cK=\{b\}$ and $p_2(x)\equiv Ax$.  Important applications of~\eqref{primefff} include  model predictive control~\cite{751369} and basis pursuit problem~\cite{ChenDonohoSaubders}.
	When $h_1\equiv 0$, $\cK$ is a closed convex cone in $\R^{d_2}$ and $p_2(\cdot)$ is convex with respect to $\cK$, problem~\eqref{prime} reduces to the convex conic programming model~\cite{NecoaraPatrascuGlineur,lu18} and in particular contains the constrained convex programming problem~\cite{rock76}:
	\begin{align}
		\label{primeff33f}
		&\min_{x\in \mathbb{R}^n} ~~~ f(x)+g(x)\\
		&\enspace s.t. \quad f_1(x)\leq 0 ,\enspace \cdots, f_m(x)\leq 0 \notag .
	\end{align}
	Apart from constrained programs,  problem~\eqref{prime} also covers many popular models in machine learning, including the sparse-group LASSO~\cite{Simon13asparse-group}, the fused LASSO~\cite{Tibshirani05sparsityand}, the square root LASSO~\cite{articlesqa}, and the support vector machine problem~\cite{Zhu:2003:SVM:2981345.2981352}. 
	
	In~\cite{rock76}, Rockafellar built an inexact augmented Lagrangian method (ALM) framework for solving~\eqref{primeff33f}.
	At each iteration of  the inexact ALM, one needs to solve a convex optimization problem (referred to as \textit{inner problem}) presumed easier than the original constrained problem~\eqref{primeff33f}, up to a certain accuracy.  
	Rockafellar~\cite{rock76} gave some stopping criteria for  the test of the inner problem solution accuracy, as well as some sufficient conditions guaranteeing the convergence of the inexact ALM method.  
	In~\cite{Nesterov2005}, Nesterov proposed a smoothing technique to deal with the unconstrained case ($h_2\equiv 0$). The idea is again to replace the original problem by an easier subproblem and solve it up to a desired accuracy.
	Although existing work usually consider either  $h_1\equiv 0$ or $h_2\equiv 0$~\cite{ber14,cham11}, 
	we can treat them in a unified way  because the augmented Lagrangian function corresponds to a smooth approximation of the function $h_2$.
	In fact, both Rockafellar's inexact ALM framework and Nesterov's smoothing technique are applications of  the inexact proximal point method~\cite{rock76ppa, BeckTeboulle12}.  
	There is no essential difficulty in extending existing results from the case $h_1\equiv 0$ or $h_2\equiv 0$ to the general model~\eqref{prime}.  For this reason,  in the following discussion, we do  not make particular difference between the papers dealing with the  two different cases (either  $h_1\equiv 0$ or $h_2\equiv 0$).

	We mainly consider two optimality criteria  for the complexity analysis of inexact ALM. 
	One  is based on the primal feasibility and the primal value optimality gap and the other on the KKT-residual. 
	A solution $x\in \dom(g)$ is said to be $\epsilon$-optimal if~\cite{rock76, Nesterov2005, BeckTeboulle12, NedelcuNecoaraTran,PatrascuNecoara15,NecoaraPatrascuGlineur,Xu2017IterationCO}
	\begin{align}\label{a:epsiol}
		|F(x)-F^\star|\leq \epsilon,\enspace  \dist(p_2(x), \cK)\leq \epsilon.
	\end{align} 
	Here,
	\begin{align}\label{a:defF}
		F(x):= f(x)+g(x)+h_1(p_1(x)),\enspace \forall x\in \R^n,
	\end{align}
	and $F^\star$ denotes the optimal value of~\eqref{prime}. A solution $x\in \dom(g)$  is said to be $\epsilon$-KKT optimal if there is $\lambda_1\in \dom(h_1^*)$ and $\lambda_2\in \dom(h_2^*)$ such that~\cite{Lan:2016:IFA:2874819.2874858,lu18}
	\begin{align}\label{a:erd}
		\dist(0,\partial_x L(x, \lambda_1,\lambda_2)) \leq \epsilon,\enspace\dist(0,\partial_{\lambda_1}L(x, \lambda_1,\lambda_2)) \leq \epsilon,\enspace\dist(0,\partial_{\lambda_2}L(x, \lambda_1,\lambda_2)) \leq \epsilon.
	\end{align}
	Here,
	$$
	L(x,\lambda_1,\lambda_2):=f(x)+g(x)+\<\lambda_1, p_1(x)>-h_1^*(\lambda_1)+\<\lambda_2, p_2(x)>-h_2^*(\lambda_2), \enspace \forall x\in \R^n, \lambda_1\in \R^{d_1}, \lambda_2\in \R^{d_2},
	$$
	denotes the Lagrangian function. A different criterion which can be derived from~\eqref{a:erd} under the boundedness of $\dom(g)$ was used in~\cite{LiuLiuMa19MoR}. Most of the previously cited papers studied the complexity bound of the inexact ALM, which is the number of inner iterations needed for computing an $\epsilon$-optimal solution or an $\epsilon$-KKT solution. The lowest known complexity bound is $O(\epsilon)$ for obtaining an $\epsilon$-optimal solution~\cite{Nesterov2005, BeckTeboulle12, PatrascuNecoara15,NecoaraPatrascuGlineur,Xu2017IterationCO,tran18ada}, and $\tilde O(\epsilon)$ for obtaining an $\epsilon$-KKT solution~\cite{lu18}.

	There are some variants of inexact ALM which avoid the solution of inner problems, including the linearized ALM~\cite{Xu2017IterationCO} and linearized ADMM~\cite{ouyang15} as well as their stochastic extensions~\cite{xu2017first,xu2018accelerated,chambolle2018stochastic}. These inner problem free methods are widely used in practice thanks to their simple implementation form and good practical convergence behavior.  However, $O(\epsilon)$ complexity bound of these methods are established only in an ergodic sense, not in the last iterate.
	In~\cite{tran18}, an accelerated smooth gap reduction method (ASGARD) was developed with a non-ergodic $O(\epsilon)$ complexity bound and showed superior numerical performance than linearized ADMM. The method has been extended to a stochastic block coordinate update version in~\cite{ala17}, called SMART-CD. In practice, it was observed that appropriately restarting ASGARD or SMART-CD can further speed up the convergence. In~\cite{tran18ada},  the authors analyzed a double-loop ASGARD (ASGARD-DL) which achieves the non-ergodic complexity bound $O(\epsilon)$ and has similar practical convergence behavior as ASGARD with restart. ASGARD-DL~\cite{tran18ada} can be seen as an inexact ALM. However, in contrast to a series of work on inexact ALM~\cite{Nesterov2005, BeckTeboulle12, PatrascuNecoara15,NecoaraPatrascuGlineur,Xu2017IterationCO}, ASGARD-DL has an explicit inner termination rule and does not require the boundedness of $\dom(g)$.
	
	%The optimality criterion in~\eqref{a:epsiol} was adopted in~\cite{rock76}, while the KKT-optimality criterion in~\eqref{} was studied in~\cite{}.
	%Some recent work investigated the overall complexity of the inexact ALM, see~\cite{NecoaraPatrascuGlineur,Lan:2016:IFA:2874819.2874858,NedelcuNecoaraTran,PatrascuNecoara15,LiuLiuMa19MoR} for~\eqref{primefff} and~\cite{Xu2017IterationCO,lu18} for conic programming. 

	The boundedness assumption of $\dom(g)$ seems to be crucial in the existing analysis for inexact ALM since it allows to directly control the number of iterations needed for the solution of each inner problem, using deterministic first-order solvers such as the accelerated proximal gradient (APG)~\cite{beck2009fista}. It was argued that such boundedness assumption is mild because  in some cases it is possible to find a bounded set including the optimal solution~\cite{LiuLiuMa19MoR}. Nevertheless, removing this assumption from the complexity analysis of inexact ALM seems to be challenging and requires different approaches from existing ones. ASGARD-DL~\cite{tran18ada} is among the first which  
	achieve the best complexity bound $O(\epsilon)$ without making the compactness assumption.
	However, their analysis builds on a very special  property  of APG  and thus excludes the possibility of other inner solvers.

	Allowing more flexible choice of inner solver is a very important feature in the large-scale setting. It is recognized that some randomized first-order methods can be more efficient than APG when the problem dimension is high. This includes for example the randomized coordinate descent variant of APG (\textit{a.k.a.} APPROX)~\cite{FR:2013approx} and the stochastic variance reduced variant of APG (\textit{a.k.a.} Katyusha)~\cite{Katyusha}.  Compared with their deterministic origin APG, APPROX can reduce the computation load when the number of coordinates $n$ is large, while Katyusha is more efficient when the number of constraints $m$ (in~\eqref{primeff33f}) is large.  
	With the ever-increasing scale of the problems to be solved, it is necessary to employ randomized methods for solving the inner problems. However, the
	complexity analysis of inexact ALM with randomized inner solvers seems not to have been fully investigated.
	
	In this paper, we develop an inexact proximal ALM which does not require the boundedness of $\dom(g)$ for the inner termination rule, and analyze its total complexity bound for any linearly convergent inner solver. Since randomized inner solvers are included, we will only require the optimality criteria~\eqref{a:epsiol} and~\eqref{a:erd} to be achieved in expectation, see~\eqref{a:ef} and~\eqref{a:tdff}. In addition, the complexity bound that we provide is an upper bound on the expectation of the number of total inner iterations.
	We summarize below our contributions.
	\begin{enumerate}
		\item We give a stochastic extension of Rockafellar's inexact  proximal ALM framework, see Algorithm~\ref{ipALM}. The  difference with the original framework lies in the inner problem stopping criteria, which only asks the   inner optimality gap to be smaller than a certain threshold in expectation.
		\item 
		For any linearly convergent inner solver $\cA$, we give an upper bound on the number of inner iterations required to satisfy the  stopping criteria, see~\eqref{eq:ms2}.  In contrast to  related work~\cite{NecoaraPatrascuGlineur,Lan:2016:IFA:2874819.2874858,NedelcuNecoaraTran,PatrascuNecoara15,LiuLiuMa19MoR,Xu2017IterationCO,lu18}, the upper bound computed by~\eqref{eq:ms2} does not depend  on the diameter of  $\dom(g)$ and in particular does not need to assume the boundedness of $\dom(g)$.
		Instead, our upper bound is adaptively computed based on the previous and current  primal and dual iterates, as well as the linear convergence rate of the inner solver $\cA$.  
		
		\item Based on the explicit upper bound computed by~\eqref{eq:ms2}, we propose an inexact proximal ALM with an explicit inner termination rule, see Algorithm~\ref{ipALM_m}. Compared with the previously mentioned work, our termination rule \begin{itemize}
			\item \textbf{does not} require the desired accuracy $\epsilon$ to be set a priori;
			\item \textbf{does not} need to assume the boundedness of $\dom(g)$.
		\end{itemize}

		\item We show that the complexity bound of Algorithm~\ref{ipALM_m} is $\tilde O(1/\epsilon^{\ell})$ to obtain an $\epsilon$-optimal solution where $\ell>0$ is a constant determined by the convergence rate of the inner solver $\cA$, see Theorem~\ref{main:theoemalgo2}. 
		Our approach can be easily extended to obtain $\tilde O(1/\epsilon^{\ell})$ complexity bound for $\epsilon$-KKT solution, see Section~\ref{sec:kkt}.  When both the primal and dual domains are bounded, the bound $\tilde O(1/\epsilon^{\ell})$ can be improved to $ O(1/\epsilon^{\ell})$ for  obtaining an $\epsilon$-optimal solution, see Section~\ref{sec:bound}.
		
		\item  We show how to apply Theorem~\ref{main:theoemalgo2} under different problem structures and assumptions. When $p_1(\cdot)$ and $p_2(\cdot)$ are linear,
		under the same assumptions as~\cite{NecoaraPatrascuGlineur,Lan:2016:IFA:2874819.2874858,NedelcuNecoaraTran,PatrascuNecoara15,LiuLiuMa19MoR,lu18} but without the boundedness  of $\dom(g)$, we obtain $\tilde O(1/\epsilon)$ and  $\tilde O(1/\sqrt{\epsilon})$ complexity bound respectively for the non-strongly convex and strongly convex case, see Corollary~\ref{coro:lAPG} and~\ref{coro:LL}. We also consider the case when $f$ is only relatively smooth, and establish $\tilde O(1/\epsilon)$ and $\tilde O(1/\epsilon^2)$ complexity bound respectively for the non-strongly convex and strongly convex case, see Corollary~\ref{coro:BPG}.

		\item 
		We provide theoretical justification  to support  the use of randomized solvers in large-scale setting, see Table~\ref{tab_0}. We give numerical evidence to show that with appropriate choice of inner solver, our algorithm outperforms ASGARD-DL and SMART-CD, see Figure~\ref{fig_1}, \ref{fig_2}, \ref{fig_2_inf}, \ref{fig_3}, \ref{fig_7}. Moreover,  compared with CVX, our algorithm often obtains a  solution  with medium accuracy
		within less computational time,  see Table~\ref{tab_2},~\ref{tab_3},~\ref{tab_4}.
	\end{enumerate}

	\textbf{Notations.} For any two vectors $\lambda_1 \in \R^{d_1}$ and $ \lambda_2 \in \R^{d_2}$ we denote by $(\lambda_1;\lambda_2)$ the vector in $\R^{d_1+d_2}$ obtained by concatenating $\lambda_1$ and $\lambda_2$. Inversely, for any $\lambda\in \R^{d_1+d_2}$ we denote by $\lambda_1\in \R^{d_1}$ the vector containing the first $d_1$ components of $\lambda$ and $\lambda_2\in \R^{d_2}$ the vector containing the last $d_2$ components of $\lambda$. We use $\|\cdot\|$ to denote the standard Euclidean norm for vector and spectral norm for matrix. For any matrix $A$, $A_{i,i}$ is the $i$th diagonal element of $A$. We denote by $e_i\in \R^n$ the $i$th standard basis vector in $\R^n$. For proper, closed and convex function $h(\cdot)$, $h^{*}(\cdot)$ denotes its Fenchel conjugate function. For any $x\in \R^{d_2}$, $\dist(x, \cK)$ denotes the distance from $x$ to $\cK$.
	For any integer $n$ we denote by $[n]$ the set $\{1,2,\cdots,n\}$.
	%We use $F^\star$ to denote the optimal value of minimization problem $\min_x F(x)$.

	\textbf{Organization.} 
	In Section~\ref{sec:pre}, we study an inexact proximal ALM framework with expected inexactness condition. In Section~\ref{sec:rri}, we give an upper bound on the number of the inner iterations and obtain an instantiation of the general inexact proximal ALM. In Section~\ref{sec:total}, we briefly recall several  first order methods and their respective convergence rate. In Section~\ref{sec:rterr}, we apply  
	our main results to different structured problems. In Section~\ref{sec::dis}, we discuss some extension of our  work. In Section~\ref{sec::num}, we present numerical experiments. In Section~\ref{sec::con}, we make some concluding remarks.  Background knowledge used and missing proofs can be found in the Appendix.
	
	\section{Preliminaries}\label{sec:pre}
	\subsection{Problem and Assumptions}
	For ease of presentation we rewrite~\eqref{prime} as
	\begin{align}
		\label{prime2}
		\min_{x\in \mathbb{R}^n} f(x)+g(x)+h(p(x)),
	\end{align}
	where 
	$$
	h((u_1;u_2)):=h_1(u_1)+h_2(u_2),\enspace u_1\in\R^{d_1}, u_2\in \R^{d_2},
	$$
	and 
	$$
	p(x):= ( p_1(x); p_2(x)),\enspace x\in \R^n.
	$$
	Define the Lagrangian function
	\begin{align}
		\label{langrangian}
		L(x;\lambda):=f(x)+g(x)+\langle\lambda, p(x)\rangle-h^*(\lambda),
	\end{align}
	and consider  the Lagrange dual problem:
	\begin{align}\label{a:dfer}
		\max_{\lambda\in \R^d}  \left[D(\lambda) \equiv\inf_x L(x;\lambda)\right].
	\end{align}
	We shall call problem~\eqref{prime2} the \textit{primal problem} and~\eqref{a:dfer} the \textit{dual problem}.
	Apart from the structures mentioned in the very beginning of Section~\ref{sec:intro}, we make the following additional assumptions  throughout the paper.
	\begin{assumption}\label{ass:handp}
		~
		
		\begin{enumerate}[label=(\alph*)]
			%\item  \label{ass:P1} $P:\R^n\rightarrow \R\cup\{+\infty\}$ is proper, convex and closed with domain $\dom(P)$.
			%\item \label{ass:f1} $f:\R^n\rightarrow \R$ is convex and differentiable on an open set containing $\dom(P)$. 
			%\item \label{ass:p1} $q:\R^n\rightarrow \R^{d_1}$ and $g:\R^n\rightarrow \R^{d_2}$ are  differentiable on an open set containing $\dom(P)$. 
			\item  \label{ass:hp1} $h_1$ is  $L_{h_1}$-Lipschitz continuous.
			%	\item   \label{ass:Kq1} $\cK\subset \R^{d_2}$ is a closed convex set.	
			\item \label{ass:pq2} for any $x, y\in \R^n$, $u_1,v_1 \in \R^{d_1}$ and $\alpha\in(0,1)$
			$$
			h_1 \left(p_1(\alpha x+(1-\alpha)y)-\alpha u_1-(1-\alpha)v_1\right)\leq \alpha h_1(p_1(x)-u_1)+(1-\alpha) h_1(p_1(y)-v_1).
			$$
			\item \label{ass:pq3} for any $x, y\in \R^n$, $u_2,v_2\in \R^{d_2}$ and $\alpha\in(0,1)$ such that $
			p_2(x)-u_2\in \cK$ and $ p_2(y)-v_2\in \cK$, it holds that
			$$ p_2\left(\alpha x+(1-\alpha)y)\right)-\alpha u_2-(1-\alpha)v_2\in \cK.
			$$
			\item both the primal and the dual problem have optimal solution and the strong duality holds, i.e., there is $x^\star\in \dom(g)$ and $\lambda^\star\in \dom(h^*)$ such that $g(x^\star)\in \cK$ and
			\begin{align}\label{a:strd}
				F(x^\star)=L(x^\star;\lambda^\star)=D(\lambda^\star).
			\end{align}
		\end{enumerate}
	\end{assumption}	
	If $p_1(\cdot):\R^{n}\rightarrow \R^{d_1}$ is affine, then Assumption~\ref{ass:pq2} holds. Otherwise,~\ref{ass:pq2} holds if there is a partial order $\preceq_{\cC_1}$ on $\R^{d_1}$ induced by a closed convex cone  $\cC_1 \subset \R^{d_1}$ (i.e. $x\preceq_{\cC_1}y$ if and only if $y-x\in \cC_1$) such that the function $p_1(\cdot)$ is convex with respect to the order $\preceq_{\cC_1}$, i.e.,
	\begin{align}\label{a:pass}
		p_1(\alpha x+(1-\alpha)y)\preceq_{\cC_1} \alpha p_1(x)+(1-\alpha)p_1(y),
	\end{align}
	and the function $h_1(\cdot)$ is order preserving with respect to $\preceq$, i.e.,
	\begin{align}\label{a:hass}
		u_1\preceq_{\cC_1} v_1 \Longrightarrow h_1(u_1)\leq h_1(v_1).
	\end{align}
	Similarly, if  $p_2(\cdot):\R^{n}\rightarrow \R^{d_2}$ is affine, then Assumption~\ref{ass:pq3} holds. Otherwise,~\ref{ass:pq3} holds if there is a partial order $\preceq_{\cC_2}$ on $\R^{d_2}$ induced by a closed convex cone $\cC_2$ such that the function $p_2(\cdot)$ is convex with respect to the order $\preceq_{\cC_2}$, i.e.,
	\begin{align}\label{a:pass2}
		p_2(\alpha x+(1-\alpha)y)\preceq_{\cC_2} \alpha p_2(x)+(1-\alpha)p_2(y),
	\end{align}
	and the set $\cK$ is such that $u_2+z_2\in \cK$ for any $u_2\in \cK$ and $z_2\preceq_{\cC_2} 0$.
	\begin{remark}
		For example,  consider the partial order $\preceq$  induced by the nonnegative orthant $\R^{d_1}_+$, then~\eqref{a:pass} is satisfied if $p_1(x)=(q_1(x),\cdots,q_{d_1}(x))^\top$ with each $q_i:\R^n\rightarrow \R$  being convex. If the partial order $\preceq$  is induced by the cone of positive semidefinite matrices $S^m_+$, then~\eqref{a:pass} holds if
		$p_1(x)=\sum_{i=1}^t B_i q_i(x)$ with $B_1,\dots, B_t \in S^m_+$ and each $q_i:\R^n\rightarrow \R$  being convex, see, e.g.~\cite{Auslender:2005:IPM:3113613.3114011}. The same class of examples apply to~\eqref{a:pass2}.
	\end{remark}
	\begin{remark}
		A special case when~\eqref{a:hass} holds is when $h_1$ is the support function of some bounded set included in the dual cone of $\cC_1$, i.e.,
		$$
		h_1(x)\equiv \sup \{\<y, x>: y\in \B \cap \cC_1^{*} \},
		$$
		where $\B$ is a bounded set and $\cC_1^{*}:=\{y: \<y,x> \geq 0,\enspace \forall x\in \cC_1\}$ is the dual cone of $\cC_1$. For example, when $\B$ is the unit ball with respect to the standard Euclidean norm and $\cC_1=\R^{d_1}_+$ is the nonpositive orthant,  then $h_1(x)=\| \max(x,0)\|$, see e.g.~\cite{FriedlanderGoh:2016}.
	\end{remark}
	Let $d=d_1+d_2$. Condition~\ref{ass:pq2} and~\ref{ass:pq3} imply that for any $x, y\in \R^n$, $u,v \in \R^d$ and $\alpha\in(0,1)$
	\begin{align}\label{a:herffg}
		h\left(z\right)\leq \alpha h(p(x)-u)+(1-\alpha) h(p(y)-v),
	\end{align}
	where $z=p(\alpha x+(1-\alpha)y)-\alpha u-(1-\alpha)v$.  The latter condition guarantees the convexity of $h(p(\cdot)):\R^n\rightarrow \R\cup\{+\infty\}$.

	\subsection{Proximal ALM Revisited}\label{sec:prox_ALM}
	Let any $\lambda\in \R^d$ and $\beta>0$. Define
	\begin{align}\label{a:hbetadef}
		h(u;\lambda,\beta):=\max_{v\in \R^d}\left\{\<v, u>-h^{*}(v)- \frac{\beta}{2}\|v-\lambda\|^2 \right\},
	\end{align}
	and
	\begin{align}\label{a:Lambda}
		\Lambda(u;\lambda, \beta):=\arg\max_{v\in \R^d}\left\{\<v, u>-h^{*}(v)- \frac{\beta}{2}\|v-\lambda\|^2 \right\}.
	\end{align}
	The function $h(\cdot;\lambda,\beta)$ is known as an approximate smooth function of the possibly nonsmooth function $h(\cdot)$ with parameter $\lambda$ and $\beta$. We next recall some results needed later about the smooth function $h(\cdot;\lambda,\beta)$. 
	\begin{lemma}[\cite{Nesterov2005, BauschkeCombetter09,BeckTeboulle12}]\label{l:erdfgt}
		\begin{enumerate}
			\item 	The function $h(u;\lambda,\beta)$ is convex and differentiable with respect to $u$. Denote by $\nabla_1 h(u;\lambda,\beta)$ the gradient with respect to the variable $u$, then we have
			%and to $\lambda$ 
			\begin{align}\label{a:nablehu}
				&\nabla_1 h(u;\lambda,\beta)=\Lambda(u;\lambda,\beta)\\ \label{a:nablahLip}
				&\|\nabla_1 h(u;\lambda,\beta)-\nabla_1 h(v;\lambda, \beta)\| \leq \beta^{-1} \| u-v\|
				%\nabla_{\lambda} h(u;\lambda,\beta)=\beta(\Lambda(u;\lambda,\beta)-\lambda)
			\end{align}
			\item 	For any $u, \lambda\in \R^d$ and $\beta>0$ we have
			\begin{align}\label{a:dualityhbeta}
				& h(u;\lambda,\beta)=\min_w\left\{ h(u-w)+\frac{1}{2\beta}\|w\|^2+\<w,\lambda> \right\} \leq h(u) 
			\end{align}
			In addition, the optimal solution $w^\star$ for~\eqref{a:dualityhbeta} is given by
			\begin{align}\label{a:wstar}
				w^\star=\beta(\Lambda(u;\lambda,\beta)-\lambda),
			\end{align}
			and 
			\begin{align}
				& \label{a:hbeta}
				h(u;\lambda,\beta)=h(u-\beta(\Lambda(u;\lambda,\beta)-\lambda))+\frac{\beta}{2}\|\Lambda(u;\lambda,\beta)\|^2-\frac{\beta}{2}\|\lambda\|^2
			\end{align}
			\item For any $u,\lambda\in \R^d$ and $\beta>0$, we have
			\begin{align}\label{a:optimalitycondition}
				u-\beta(\Lambda(u;\lambda, \beta)-\lambda)\in \partial h^{*}(\Lambda(u;\lambda,\beta)).
			\end{align}
		\end{enumerate}
	\end{lemma}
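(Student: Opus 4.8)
The plan is to recognize $h(\cdot;\lambda,\beta)$ as a Fenchel conjugate and to read off all three parts from standard conjugate duality. Set $\phi(v):=h^{*}(v)+\frac{\beta}{2}\Norm{v-\lambda}^2$; then by \eqref{a:hbetadef} we have $h(u;\lambda,\beta)=\phi^{*}(u)$ and by \eqref{a:Lambda} $\Lambda(u;\lambda,\beta)=\arg\max_v\{\<v,u>-\phi(v)\}$. Since $h$ is proper, closed and convex, so is $h^{*}$, and the quadratic term makes $\phi$ a $\beta$-strongly convex proper closed function; hence the maximizer $\Lambda(u;\lambda,\beta)$ exists and is unique.

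For Part 1, I would invoke the standard duality between strong convexity and gradient Lipschitzness \cite{Nesterov2005,BauschkeCombetter09,BeckTeboulle12}: the conjugate of a $\beta$-strongly convex closed proper function is finite-valued, continuously differentiable, and has $\beta^{-1}$-Lipschitz gradient, with $\nabla\phi^{*}(u)$ equal to the unique maximizer defining $\phi^{*}(u)$. This immediately yields $\nabla_1 h(u;\lambda,\beta)=\Lambda(u;\lambda,\beta)$, i.e.\ \eqref{a:nablehu}, together with the Lipschitz estimate \eqref{a:nablahLip}; convexity of $h(\cdot;\lambda,\beta)$ is automatic, being a conjugate. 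For Part 2, write $\phi=h^{*}+g$ with $g(v):=\frac{\beta}{2}\Norm{v-\lambda}^2$, whose conjugate is computed directly as $g^{*}(w)=\frac{1}{2\beta}\Norm{w}^2+\<w,\lambda>$. Because $g$ is finite and continuous everywhere, the conjugate-of-a-sum rule applies without any relative-interior qualification, so $\phi^{*}=(h^{*})^{*}\,\square\,g^{*}=h\,\square\,g^{*}$, where $\square$ denotes infimal convolution and $(h^{*})^{*}=h$ by biconjugacy. Unfolding the infimal convolution gives exactly \eqref{a:dualityhbeta}, and the inequality $h(u;\lambda,\beta)\le h(u)$ follows by taking $w=0$ in the infimum.

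Part 3 I would obtain most cleanly by applying Fermat's rule to the concave maximization defining $\Lambda(u;\lambda,\beta)$: at the maximizer the subdifferential of $v\mapsto\<v,u>-h^{*}(v)-\frac{\beta}{2}\Norm{v-\lambda}^2$ contains $0$, which reads $u-\beta(\Lambda-\lambda)\in\partial h^{*}(\Lambda)$ with $\Lambda=\Lambda(u;\lambda,\beta)$, i.e.\ \eqref{a:optimalitycondition}; equivalently, by the subgradient inversion $y\in\partial h^{*}(v)\Leftrightarrow v\in\partial h(y)$, this says $\Lambda\in\partial h(u-\beta(\Lambda-\lambda))$. To identify the minimizer $w^{\star}$ in \eqref{a:dualityhbeta}, I match the decomposition in the infimal convolution: optimality forces the common subgradient $\Lambda=\nabla g^{*}(w^{\star})=\frac{1}{\beta}w^{\star}+\lambda$, giving $w^{\star}=\beta(\Lambda-\lambda)$, which is \eqref{a:wstar}. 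Finally, substituting $w^{\star}=\beta(\Lambda-\lambda)$ into \eqref{a:dualityhbeta} and expanding $\frac{1}{2\beta}\Norm{w^{\star}}^2+\<w^{\star},\lambda>=\frac{\beta}{2}\Norm{\Lambda}^2-\frac{\beta}{2}\Norm{\lambda}^2$ yields \eqref{a:hbeta}.

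As for the main obstacle, there is no deep difficulty; the step deserving the most care is the justification of the conjugate-of-a-sum / infimal-convolution identity and the exactness (attainment) of the infimal convolution, together with the correspondence between the maximizer $\Lambda$ of the smooth (max) formulation and the minimizer $w^{\star}$ of the (min) formulation. These rest on the continuity of the quadratic term and on the biconjugacy of the closed proper convex $h$, and I would cite \cite{Nesterov2005,BauschkeCombetter09,BeckTeboulle12} for the packaged statements rather than re-derive them.
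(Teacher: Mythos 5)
Your proof is correct, and it takes the same route the paper implicitly relies on: the paper states Lemma~\ref{l:erdfgt} without proof, citing \cite{Nesterov2005, BauschkeCombetter09,BeckTeboulle12}, and your argument --- viewing $h(\cdot;\lambda,\beta)$ as the conjugate of the $\beta$-strongly convex function $h^{*}+\frac{\beta}{2}\|\cdot-\lambda\|^2$, invoking the strong-convexity/Lipschitz-gradient conjugate duality for Part 1, the sum-rule/infimal-convolution identity for Part 2, and Fermat's rule plus subgradient matching for Part 3, \eqref{a:wstar} and \eqref{a:hbeta} --- is precisely the standard derivation packaged in those references. Nothing is missing; in particular, you correctly note that the everywhere-finiteness and continuity of the quadratic term make the conjugate-of-a-sum rule and the exactness (attainment) of the infimal convolution hold without any further qualification, which is the only point requiring care.
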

	\begin{lemma}\label{l:infimalconv}
		Let $\psi(\cdot):\R^n\rightarrow\R \cup\{+\infty\}$ 
		be a convex function.  Define:
		$$
		\tilde \psi(x):=\inf_w\{h(p(x)-w)+\psi(w)\},
		$$
		Then condition~\eqref{a:herffg} ensures the convexity of   $\tilde \psi$.
	\end{lemma}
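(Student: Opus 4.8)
The plan is to verify the defining inequality of convexity directly, using $\epsilon$-approximate minimizers to circumvent the possibility that the infimum defining $\tilde\psi$ is not attained. Fix $x,y\in\R^n$ and $\alpha\in(0,1)$; the goal is to show
$$
\tilde\psi(\alpha x+(1-\alpha)y)\leq \alpha\tilde\psi(x)+(1-\alpha)\tilde\psi(y).
$$
If either $\tilde\psi(x)$ or $\tilde\psi(y)$ equals $+\infty$ the inequality holds trivially, so we may assume both are finite.

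Given $\epsilon>0$, I would select $w_x,w_y\in\R^n$ with
$$
h(p(x)-w_x)+\psi(w_x)\leq \tilde\psi(x)+\epsilon,\qquad h(p(y)-w_y)+\psi(w_y)\leq\tilde\psi(y)+\epsilon,
$$
which exist by definition of the infimum. Setting $w:=\alpha w_x+(1-\alpha)w_y$ and noting that $w$ is a feasible point in the infimum defining $\tilde\psi$ at the averaged argument, we obtain
$$
\tilde\psi(\alpha x+(1-\alpha)y)\leq h\bigl(p(\alpha x+(1-\alpha)y)-w\bigr)+\psi(w).
$$

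The crucial step is to bound the first term on the right. Observe that $p(\alpha x+(1-\alpha)y)-w$ is exactly the vector $z$ appearing in~\eqref{a:herffg} with the choices $u=w_x$ and $v=w_y$. Applying~\eqref{a:herffg} therefore yields
$$
h\bigl(p(\alpha x+(1-\alpha)y)-w\bigr)\leq \alpha h(p(x)-w_x)+(1-\alpha)h(p(y)-w_y),
$$
while convexity of $\psi$ gives $\psi(w)\leq \alpha\psi(w_x)+(1-\alpha)\psi(w_y)$. Combining these two bounds with the choice of $w_x,w_y$ produces
$$
\tilde\psi(\alpha x+(1-\alpha)y)\leq \alpha\tilde\psi(x)+(1-\alpha)\tilde\psi(y)+\epsilon,
$$
and since $\epsilon>0$ is arbitrary the convexity inequality follows.

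I expect the only delicate point to be the handling of non-attained infima, which is why I pass to $\epsilon$-approximate minimizers rather than exact ones; everything else reduces to invoking~\eqref{a:herffg} for the composite term $h(p(\cdot)-\cdot)$, evaluated at the \emph{jointly} averaged arguments, together with the assumed convexity of $\psi$ for the separable term. It is worth emphasizing that ordinary convexity of $h\circ p$ alone would not suffice here, since the averaging acts simultaneously on $x$ and on the shift $w$; it is precisely the joint form of~\eqref{a:herffg} in the two arguments that makes the estimate go through.
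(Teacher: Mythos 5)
Your proof is correct and takes essentially the same approach as the paper's: the key step in both is to apply~\eqref{a:herffg} at the jointly averaged point $p(\alpha x+(1-\alpha)y)-\alpha w_x-(1-\alpha)w_y$ and combine it with the convexity of $\psi$. The only cosmetic difference is that you work with $\epsilon$-approximate minimizers, whereas the paper performs the equivalent manipulation directly on the infima by reparametrizing $\inf_w$ as an infimum over pairs $(u,v)$ with $w=\alpha u+(1-\alpha)v$ and then splitting it.
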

	\begin{lemma}\label{l:rterrr}
		Fix any $\lambda\in\R^d$ and $\beta>0$. Define $$\tilde \psi (x):=h(p(x);\lambda,\beta)$$ Then $\tilde \psi:\R^n\rightarrow \R$ is a convex  and differentiable function with $\nabla \tilde \psi(x)=\nabla p(x)\Lambda(p(x);\lambda,\beta)$.
	\end{lemma}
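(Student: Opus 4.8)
The statement has two parts: the convexity of $\tilde\psi$ and its differentiability together with the gradient formula. The differentiability is essentially a chain-rule computation and is routine; the convexity is where the structural condition~\eqref{a:herffg} must be used, since the composition of a convex function with the (generally non-affine) map $p$ need not be convex. The plan is to first dispose of convexity by recognizing $\tilde\psi$ as an infimal convolution of exactly the type already treated in Lemma~\ref{l:infimalconv}, and then to establish differentiability by the chain rule using the regularity of the smoothed function supplied by Lemma~\ref{l:erdfgt}.

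For convexity, I would invoke the primal (Moreau-type) representation of the smoothing given in~\eqref{a:dualityhbeta}:
$$
\tilde\psi(x)=h(p(x);\lambda,\beta)=\min_w\left\{h(p(x)-w)+\frac{1}{2\beta}\|w\|^2+\<w,\lambda>\right\}.
$$
Setting $\psi(w):=\frac{1}{2\beta}\|w\|^2+\<w,\lambda>$, which is a finite convex function of $w$, this reads exactly $\tilde\psi(x)=\inf_w\{h(p(x)-w)+\psi(w)\}$. This is precisely the object appearing in Lemma~\ref{l:infimalconv}, whose hypothesis---condition~\eqref{a:herffg}---is in force throughout the paper. Lemma~\ref{l:infimalconv} therefore yields the convexity of $\tilde\psi$ directly, with no further work.

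For differentiability and the gradient formula, I would argue by composition. By Lemma~\ref{l:erdfgt}, the map $u\mapsto h(u;\lambda,\beta)$ is differentiable with gradient $\nabla_1 h(u;\lambda,\beta)=\Lambda(u;\lambda,\beta)$ by~\eqref{a:nablehu}, and in fact this gradient is $\beta^{-1}$-Lipschitz, so $h(\cdot;\lambda,\beta)$ is even continuously differentiable. Since $p$ is differentiable by assumption, the composition $\tilde\psi=h(p(\cdot);\lambda,\beta)$ is differentiable, and the chain rule gives
$$
\nabla\tilde\psi(x)=\nabla p(x)\,\nabla_1 h(p(x);\lambda,\beta)=\nabla p(x)\,\Lambda(p(x);\lambda,\beta),
$$
where $\nabla p(x)\in\R^{n\times d}$ denotes the transposed Jacobian of $p$ at $x$, in accordance with the paper's convention. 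This is the claimed identity.

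The only genuine obstacle is the convexity, and it is entirely resolved by the reduction to Lemma~\ref{l:infimalconv}; I would emphasize that this reduction is what makes the structural condition~\eqref{a:herffg} indispensable. The one remaining bookkeeping point is that $\tilde\psi$ is real-valued rather than extended-real-valued: this follows because the penalty $-\frac{\beta}{2}\|v-\lambda\|^2$ in~\eqref{a:hbetadef} makes the maximand coercive in $v$, so the maximum is finite and attained whenever $\dom(h^*)\neq\emptyset$, which holds by Assumption~\ref{ass:handp}.
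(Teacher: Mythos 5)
Your proof is correct and follows essentially the same route as the paper: convexity via the Moreau-type representation~\eqref{a:dualityhbeta} and Lemma~\ref{l:infimalconv} applied with $\psi(w):=\frac{1}{2\beta}\|w\|^2+\<w,\lambda>$, and the gradient formula via~\eqref{a:nablehu} together with the chain rule. The extra remarks on finiteness of $\tilde\psi$ and Lipschitz continuity of $\nabla_1 h(\cdot;\lambda,\beta)$ are harmless additions beyond what the paper records.
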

	\iffalse \begin{lemma}\label{l:infimalconv}
		Let $\psi(\cdot):\R^n\rightarrow\R \cup\{+\infty\}$ and $\phi(\cdot):\R^d\rightarrow \R\cup\{+\infty\}$
		be two convex functions.  Define:
		$$
		\tilde \phi(x):=\inf_w\{h(p(x)-w)+\phi(w)\},
		$$
		and
		$$
		\tilde \psi(w):=\inf_x\{h(p(x)-w)+\psi(x)\}.
		$$
		Then condition~\eqref{a:herffg} ensures the convexity of  both $\tilde \phi$ and  $\tilde \psi$.
	\end{lemma}\fi
	%Define
	%\begin{align}\label{a:AL}
	%L(x;\lambda, \beta):=f(x)+P(x)+h_{\beta}(p(x);\lambda).
	%\end{align}
	Define 
	\begin{align}\label{a:pAL}
		L(x;y,\lambda, \beta):=f(x)+g(x)+h(p(x);\lambda,\beta)+\frac{\beta}{2}\|x-y\|^2.
	\end{align}
	It follows from Lemma~\ref{l:rterrr} that  for any $\beta>0$ the function $L(x;y,\lambda,\beta)$ is strongly convex with respect to the variable $x$. 
	%It is clear that
	%\begin{align}\label{a:pALe}
	%L(x;y,\lambda, \beta)\equiv L(x;\lambda,\beta)+\frac{\beta}{2}\|x-y\|^2.
	%\end{align}
	Let $\{\beta_s:s\geq 0\}$  and $\{\epsilon_s:s\geq 0\}$ be two sequence of positive numbers. We recall the  inexact  proximal augmented Lagrangian framework in Algorithm~\ref{ipALM}.  The objective function at outer iteration $s$ is denoted by $H_s(\cdot)$ and $H_s^\star:=\min_x H_s(x)$.
	A small difference with the classical  inexact proximal ALM in~\cite{rock76} is that we only require to control  the  expectation of the subproblem objective value gap. In particular, note that the iterates $\{(x^s,\lambda^s)\}$ in Algorithm~\ref{ipALM} are random variables.  We denote by $\cF_s$ the $\sigma$-algebra generated by $\{x^t: t\leq s\}\cup \{\lambda^{t}:t\leq s\}$. %Without further specification the expectation is taken with respect to $\cF_{+\infty}$ and all equations hold with probability 1.
	\begin{algorithm}[H]
		\caption{IPALM (compare with~\cite{rock76})}
		\label{ipALM}
		\begin{algorithmic}
			\Require   $\{\epsilon_s\}$, $\{\beta_s\}$;
			\Ensure  $x^{-1}\in \dom(g)$, $\lambda^0\in \dom(h^{*})$;
			\For {$s= 0, 1, \ldots$}
			\State Find $ x^s\simeq \arg\min H_s(x)\equiv L(x; x^{s-1}, \lambda^s, \beta_s)$ satisfying
			$
			\bE[{H_s}(x^s)- H_s^\star]\leq \epsilon_{s}
			$
			\State $\lambda^{s+ 1}\leftarrow \Lambda(p(x^{s});\lambda^s,\beta_s)$
			\EndFor
		\end{algorithmic}
	\end{algorithm}
	We now recall a few known results about inexact proximal ALM. Note that we are in a slightly more general setting than~\cite{rock76} due to our problem formulation~\eqref{prime} and the expected inexactness condition in Algorithm~\ref{ipALM}. A modification of the original proof is needed to obtain the  desired results. For completeness  proofs can be found in Appendix~\ref{app:pALM}. Hereinafter,  $x^\star$ is an arbitrary optimal solution of the primal problem~\eqref{prime2} and $\lambda^\star$ is an arbitrary optimal solution of the dual problem~\eqref{a:dfer}.

	\begin{lemma}[compare with~\cite{rock76ppa}]
		\label{bound_y_1.3}
		Let $\{x^{s}, \lambda^s\}$ be the sequence generated by Algorithm $\ref{ipALM}$. Then for any $s\geq 0$,
		\begin{align}
			&\mathbb{E}\left[\Mnorm{(x^{s},\lambda^{s+ 1})- (x^{s-1},\lambda^{s})}\right]\le \Mnorm{(x^{-1},\lambda^0)- (x^\star,\lambda^\star)}+ \sum_{i= 0}^s \sqrt{2\epsilon_{i}/\beta_i}\\ \label{a:erdfdfdfd}
			&\bE\left[ \Mnorm{(x^{s},\lambda^{s+ 1})- (x^\star,\lambda^\star)}^2\right] \leq \left(\Mnorm{(x^{-1},\lambda^0)- (x^\star,\lambda^\star)}+ \sum_{i= 0}^s \sqrt{2\epsilon_{i}/\beta_i} \right)^2 
		\end{align}
	\end{lemma}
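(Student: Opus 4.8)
The plan is to read Algorithm~\ref{ipALM} as an inexact proximal point iteration for the maximal monotone operator $T(x,\lambda):=\partial_x L(x;\lambda)\times\bigl(-\partial_\lambda L(x;\lambda)\bigr)$ attached to the saddle function $L$ in~\eqref{langrangian}, and then to invoke the nonexpansiveness of its resolvent in the spirit of~\cite{rock76ppa}. Writing $z=(x,\lambda)$ and $z^\star=(x^\star,\lambda^\star)$, I let $x^s_\star:=\arg\min_x H_s(x)$ and $\lambda^{s+1}_\star:=\Lambda(p(x^s_\star);\lambda^s,\beta_s)$ be the \emph{exact} outcome of the $s$-th outer step. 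Combining the first-order optimality condition for $H_s$ with Lemma~\ref{l:rterrr} (for $\nabla_x h(p(\cdot);\lambda,\beta)$) and the smoothing identity~\eqref{a:optimalitycondition}, one checks that $\beta_s\bigl((x^{s-1},\lambda^s)-(x^s_\star,\lambda^{s+1}_\star)\bigr)\in T(x^s_\star,\lambda^{s+1}_\star)$, i.e. $(x^s_\star,\lambda^{s+1}_\star)=(I+\beta_s^{-1}T)^{-1}(x^{s-1},\lambda^s)$ is the exact proximal point of $(x^{s-1},\lambda^s)$. By strong duality (Assumption~\ref{ass:handp}) we have $0\in T(z^\star)$, so $z^\star$ is a fixed point of every resolvent $(I+\beta_s^{-1}T)^{-1}$.

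The crux is to show that the \emph{objective} inexactness $\bE[H_s(x^s)-H_s^\star]\le\epsilon_s$ forces the \emph{joint} iterate $(x^s,\lambda^{s+1})$ to be close to the exact proximal point:
\[
\tfrac{\beta_s}{2}\,\bE\!\left[\norm{(x^s,\lambda^{s+1})-(x^s_\star,\lambda^{s+1}_\star)}^2\right]\le \bE[H_s(x^s)-H_s^\star]\le\epsilon_s .
\]
To prove the first inequality I introduce the proximal saddle function $\Phi_s(x,\lambda):=L(x;\lambda)+\tfrac{\beta_s}{2}\norm{x-x^{s-1}}^2-\tfrac{\beta_s}{2}\norm{\lambda-\lambda^s}^2$. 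By~\eqref{a:hbetadef}--\eqref{a:Lambda} and~\eqref{a:pAL}, $H_s(x)=\max_\lambda\Phi_s(x,\lambda)$ with maximizer $\Lambda(p(x);\lambda^s,\beta_s)$; hence $H_s(x^s)=\Phi_s(x^s,\lambda^{s+1})$ and $H_s^\star=\Phi_s(x^s_\star,\lambda^{s+1}_\star)$ is the saddle value. Since $\lambda^{s+1}_\star\in\dom(h^*)$, the term $\langle\lambda^{s+1}_\star,p(\cdot)\rangle$ is convex (by the cone/affine structure of~\ref{ass:pq2}--\ref{ass:pq3}), so $\Phi_s(\cdot,\lambda^{s+1}_\star)$ is $\beta_s$-strongly convex with minimizer $x^s_\star$, while $\Phi_s(x^s,\cdot)$ is $\beta_s$-strongly concave with maximizer $\lambda^{s+1}$; chaining the two one-sided quadratic bounds through the common value $\Phi_s(x^s,\lambda^{s+1}_\star)$ gives the displayed estimate. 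Taking expectations and applying (conditional) Jensen then yields $\bE\,\norm{(x^s,\lambda^{s+1})-(x^s_\star,\lambda^{s+1}_\star)}\le\sqrt{2\epsilon_s/\beta_s}$ and the matching $L^2$ bound.

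With the per-step error controlled, I close the recursion using two resolvent properties: nonexpansiveness, $\norm{(x^s_\star,\lambda^{s+1}_\star)-z^\star}\le\norm{(x^{s-1},\lambda^s)-z^\star}$, and the companion estimate $\norm{(x^s_\star,\lambda^{s+1}_\star)-(x^{s-1},\lambda^s)}\le\norm{(x^{s-1},\lambda^s)-z^\star}$ coming from firm nonexpansiveness. For~\eqref{a:erdfdfdfd} I combine the triangle inequality with Minkowski's inequality in $L^2$ to obtain $\bigl(\bE\,\norm{(x^s,\lambda^{s+1})-z^\star}^2\bigr)^{1/2}\le \bigl(\bE\,\norm{(x^{s-1},\lambda^s)-z^\star}^2\bigr)^{1/2}+\sqrt{2\epsilon_s/\beta_s}$, unroll from the deterministic start $(x^{-1},\lambda^0)$, and square. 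For the first inequality I split $(x^s,\lambda^{s+1})-(x^{s-1},\lambda^s)$ through the exact proximal point and bound the two pieces by $\sqrt{2\epsilon_s/\beta_s}$ and $\bE\,\norm{(x^{s-1},\lambda^s)-z^\star}$, the latter being governed by the distance recursion just established.

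I expect the main obstacle to be the joint distance estimate of the second paragraph. The naive route---bounding $\norm{x^s-x^s_\star}$ from strong convexity of $H_s$ and then propagating to $\lambda^{s+1}=\Lambda(p(x^s);\lambda^s,\beta_s)$ via Lipschitz continuity of $\Lambda$---fails because $p$ is nonlinear, so $x\mapsto\Lambda(p(x);\lambda^s,\beta_s)$ need not be globally Lipschitz. The saddle-function reformulation circumvents this precisely because $\lambda^{s+1}$ is the \emph{exact} best response to $x^s$, so strong concavity supplies the dual half of the distance for free; verifying strong convexity of $\Phi_s(\cdot,\lambda^{s+1}_\star)$ from~\ref{ass:pq2}--\ref{ass:pq3} is the one place where the structural hypotheses are genuinely used.
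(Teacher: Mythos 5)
Your proposal is correct and, at the top level, follows the same route as the paper: Algorithm~\ref{ipALM} is recast as an inexact proximal point iteration for the Lagrangian saddle operator (the paper's $\cT_l$), the objective-gap inexactness is converted into a per-step resolvent error $\varepsilon_s=\sqrt{2\epsilon_s/\beta_s}$, and the two estimates then come from the stochastic PPA distance recursion; this is exactly the content of the paper's Proposition~\ref{prop:3fgrtrt}, Lemma~\ref{l:sL2} and Lemma~\ref{li} in the Appendix. Where you genuinely deviate is in the proofs of the two supporting facts. For the crux inequality $\tfrac{\beta_s}{2}\|(x^s,\lambda^{s+1})-(x^s_\star,\lambda^{s+1}_\star)\|^2\le H_s(x^s)-H_s^\star$, the paper (Lemma~\ref{l:sL2}) argues through the perturbation function $H(w;y,\lambda,\beta)=\min_x L(x,w;y,\lambda,\beta)$, whose $1/\beta$-strong convexity in $w$ is supplied by Lemma~\ref{l:infimalconv2}; you instead use the saddle representation $H_s(x)=\max_\lambda\Phi_s(x,\lambda)$ and add the strong-convexity gap in $x$ (at $\lambda=\lambda^{s+1}_\star$) to the strong-concavity gap in $\lambda$ (at $x=x^s$). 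Your derivation is more direct and makes the appearance of the dual distance transparent. For the $L^2$ recursion $(\bE\|z^{s}-z^\star\|^2)^{1/2}\le(\bE\|z^{s-1}-z^\star\|^2)^{1/2}+\varepsilon_s$, you use plain nonexpansiveness plus Minkowski's inequality, which is shorter than the paper's computation in Lemma~\ref{li} via the firm-nonexpansiveness inequality~\eqref{PPA} and Cauchy--Schwarz; both are valid, and you still need~\eqref{PPA} anyway for the companion bound used in the first estimate, as you correctly note.

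One step needs tightening. You justify convexity of $x\mapsto\langle\lambda^{s+1}_\star,p(x)\rangle$ --- needed both for the $\beta_s$-strong convexity of $\Phi_s(\cdot,\lambda^{s+1}_\star)$ and, implicitly, for the monotonicity of $T$ --- ``by the cone/affine structure of~\ref{ass:pq2}--\ref{ass:pq3}''. But the paper states~\ref{ass:pq2}--\ref{ass:pq3} in a form strictly more general than the conic one (the cone order is only offered afterwards as a sufficient condition), so in full generality this convexity must be derived from~\eqref{a:herffg} itself. It does follow, for every $\lambda\in\dom(h^*)$: set $\delta:=p(\alpha x+(1-\alpha)y)-\alpha p(x)-(1-\alpha)p(y)$; applying~\eqref{a:herffg} with $u=p(x)-w_1$ and $v=p(y)-w_2$ gives $h(\delta+\alpha w_1+(1-\alpha)w_2)\le\alpha h(w_1)+(1-\alpha)h(w_2)$ for all $w_1,w_2$; combining with Fenchel--Young, $\langle\lambda,z\rangle\le h(z)+h^*(\lambda)$, and taking the infimum over $w_1$ and $w_2$ (each infimum producing $-h^*(\lambda)$) yields $\langle\lambda,\delta\rangle\le 0$, i.e.\ the desired convexity. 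With this two-line patch your argument is complete; note that the paper's perturbation-duality proof sidesteps this point precisely because it invokes~\eqref{a:herffg} only through Lemma~\ref{l:infimalconv2}.
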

	Lemma~\ref{bound_y_1.3} allows us to give a bound on the produced primal dual sequence $\{(x^s,\lambda^s)\}_s$.
	\begin{coro}\label{coro:bound}
		Consider Algorithm~\ref{ipALM} with $\beta_s=\beta_0 \rho^s$ and $\epsilon_s=\epsilon_0 \eta^s$ for some $0<\eta<\rho<1$. Define
		\begin{align}\label{a:defc0}
			c_0:=2\left(\Mnorm{(x^{-1},\lambda^0)- (x^\star,\lambda^\star)}+ \frac{2\sqrt{\epsilon_0/\beta_0}}{1-\sqrt{\eta/\rho}} \right)^2 +2 \|\lambda^\star\|^2+2\|x^\star\|^2.
		\end{align}
		Then we have
		$$
		\max\left(\bE\left[\|\lambda^s\|^2\right], \bE\left[\|x^s\|^2\right],\bE\left[ \Mnorm{x^{s}-x^\star}^2\right]\right)\leq c_0,\enspace \bE\left[\|\lambda^{s+1}-\lambda^s\|\right]\leq \sqrt{c_0},\enspace \forall s\geq 0.
		$$
	\end{coro}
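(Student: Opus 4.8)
The plan is to feed the two bounds of Lemma~\ref{bound_y_1.3} into elementary manipulations, after first evaluating the geometric series produced by the specific choices $\beta_s=\beta_0\rho^s$ and $\epsilon_s=\epsilon_0\eta^s$. First I would compute $\sqrt{2\epsilon_i/\beta_i}=\sqrt{2\epsilon_0/\beta_0}\,(\sqrt{\eta/\rho})^i$, so that, since $0<\eta<\rho$, the partial sums are dominated by a convergent geometric series:
$$
\sum_{i=0}^s \sqrt{2\epsilon_i/\beta_i}\;\le\;\frac{\sqrt{2\epsilon_0/\beta_0}}{1-\sqrt{\eta/\rho}}\;\le\;\frac{2\sqrt{\epsilon_0/\beta_0}}{1-\sqrt{\eta/\rho}},
$$
where the last step merely replaces $\sqrt{2}$ by the larger constant $2$ to match the definition of $c_0$. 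Writing $R:=\Mnorm{(x^{-1},\lambda^0)-(x^\star,\lambda^\star)}$ and $A:=\big(R+2\sqrt{\epsilon_0/\beta_0}/(1-\sqrt{\eta/\rho})\big)^2$, this shows the right-hand side of~\eqref{a:erdfdfdfd} is bounded by $A$ for every $s$, while by construction $c_0=2A+2\Mnorm{\lambda^\star}^2+2\Mnorm{x^\star}^2$, so in particular $A\le c_0$ and $R^2\le A$.

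Next I would unpack the squared-distance bound. Since $\Mnorm{(x^s,\lambda^{s+1})-(x^\star,\lambda^\star)}^2=\Mnorm{x^s-x^\star}^2+\Mnorm{\lambda^{s+1}-\lambda^\star}^2$, inequality~\eqref{a:erdfdfdfd} yields simultaneously $\bE[\Mnorm{x^s-x^\star}^2]\le A$ and $\bE[\Mnorm{\lambda^{s+1}-\lambda^\star}^2]\le A$ for all $s\ge 0$. Because $A\le c_0$, the third claimed bound is immediate. For $\bE[\Mnorm{x^s}^2]$ and $\bE[\Mnorm{\lambda^s}^2]$ I would invoke the elementary inequality $\Mnorm{a}^2\le 2\Mnorm{a-b}^2+2\Mnorm{b}^2$ with $b=x^\star$ (resp.\ $b=\lambda^\star$): this gives $\bE[\Mnorm{x^s}^2]\le 2A+2\Mnorm{x^\star}^2\le c_0$ and, for $s\ge 1$, $\bE[\Mnorm{\lambda^s}^2]\le 2A+2\Mnorm{\lambda^\star}^2\le c_0$. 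The only case needing separate attention is the deterministic initial dual iterate $\lambda^0$; here one uses $\Mnorm{\lambda^0-\lambda^\star}\le R$ together with $R^2\le A$ to conclude $\Mnorm{\lambda^0}^2\le 2R^2+2\Mnorm{\lambda^\star}^2\le c_0$.

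Finally, for the consecutive-difference estimate I expect the only genuine subtlety to arise: a naive triangle inequality $\bE[\Mnorm{\lambda^{s+1}-\lambda^s}]\le \bE[\Mnorm{\lambda^{s+1}-\lambda^\star}]+\bE[\Mnorm{\lambda^s-\lambda^\star}]$ combined with Jensen only delivers $2\sqrt{A}$, which may exceed $\sqrt{c_0}=\sqrt{2A+\cdots}$. Instead I would use the \emph{first} bound of Lemma~\ref{bound_y_1.3}, which directly controls the increment of the concatenated vector: $\bE[\Mnorm{(x^s,\lambda^{s+1})-(x^{s-1},\lambda^s)}]\le R+\sum_{i=0}^s\sqrt{2\epsilon_i/\beta_i}\le \sqrt{A}$. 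Dropping the nonnegative $x$-component under the norm then gives $\bE[\Mnorm{\lambda^{s+1}-\lambda^s}]\le \sqrt{A}\le \sqrt{c_0}$, as required. The main obstacle is thus recognizing that the first (rather than the second) inequality of Lemma~\ref{bound_y_1.3} must be used for this last claim; everything else is a routine combination of the geometric-series evaluation and the two-point quadratic inequality.
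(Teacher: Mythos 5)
Your proposal is correct and follows essentially the same route as the paper's proof: evaluate the geometric series $\sum_i\sqrt{2\epsilon_i/\beta_i}$ under the choices $\beta_s=\beta_0\rho^s$, $\epsilon_s=\epsilon_0\eta^s$, split the concatenated norm in the second bound of Lemma~\ref{bound_y_1.3}, apply $\|a\|^2\le 2\|a-b\|^2+2\|b\|^2$ to pass from distances to $x^\star,\lambda^\star$ to the stated bounds, and use the \emph{first} (increment) bound of the lemma, dropping the $x$-component, to get $\bE[\|\lambda^{s+1}-\lambda^s\|]\le\sqrt{c_0}$. If anything, you are slightly more careful than the paper's terse ``We then conclude,'' since you explicitly treat the deterministic initial iterate $\lambda^0$, which the squared-distance bound (covering only $\lambda^{s+1}$ for $s\ge 0$) does not reach.
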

	
	\begin{theorem}[compare with~\cite{rock76}]
		\label{rock2} Consider Algorithm~\ref{ipALM}.
		We have the following bounds:
		\begin{align}\label{a:rock2e1}
			&  F(x^s)-F^\star\leq H_s(x^s)-H_s^\star+ L_{h_1} \beta_s \|\lambda_1^{s+1}-\lambda_1^s\|+\frac{\beta_s}{2}(\| \lambda^{s}\|^2-\|\lambda^{s+1}\|^2) +\frac{\beta_s}{2}\|x^\star-x^{s-1}\|^2	\\
			&F(x^s)-F^\star	\geq -\beta_s\|\lambda_2^\star\| \| \lambda_2^{s+1}-\lambda_2^s\|,\\
			&	\dist(p_2(x^s), \cK)\leq \beta_s\| \lambda_2^{s+1}-\lambda_2^s\|. 	\end{align}
	\end{theorem}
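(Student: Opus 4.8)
The plan is to combine the smoothing identities of Lemma~\ref{l:erdfgt} with the separable structure $h=h_1+h_2$. The single fact driving all three inequalities is that the shifted point $\hat u:=p_2(x^s)-\beta_s(\lambda_2^{s+1}-\lambda_2^s)$ lies in $\cK$. To establish this I would apply the optimality condition~\eqref{a:optimalitycondition} with $u=p(x^s)$, $\lambda=\lambda^s$, $\beta=\beta_s$ and $\Lambda(p(x^s);\lambda^s,\beta_s)=\lambda^{s+1}$, giving $p(x^s)-\beta_s(\lambda^{s+1}-\lambda^s)\in\partial h^{*}(\lambda^{s+1})$. Since $h$ is separable, $h^{*}=h_1^{*}+h_2^{*}$ and the subdifferential splits blockwise, so the last $d_2$ coordinates yield $\hat u\in\partial h_2^{*}(\lambda_2^{s+1})$. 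As $h_2$ is the indicator of $\cK$, its conjugate $h_2^{*}$ is the support function of $\cK$, whose subdifferential at any point is an exposed face of $\cK$ and hence contained in $\cK$; thus $\hat u\in\cK$. The distance bound is then immediate, because $\hat u\in\cK$ is feasible and $\dist(p_2(x^s),\cK)\le\|p_2(x^s)-\hat u\|=\beta_s\|\lambda_2^{s+1}-\lambda_2^s\|$.

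For the upper bound I would start from the definition~\eqref{a:pAL} of $H_s$ and isolate $f(x^s)+g(x^s)$, then use the identity~\eqref{a:hbeta} to rewrite $h(p(x^s);\lambda^s,\beta_s)$ as $h(p(x^s)-\beta_s(\lambda^{s+1}-\lambda^s))+\tfrac{\beta_s}{2}(\|\lambda^{s+1}\|^2-\|\lambda^s\|^2)$. After adding back $h_1(p_1(x^s))$ to reconstitute $F(x^s)$, separability together with $\hat u\in\cK$ (so $h_2(\hat u)=0$) collapses the remaining $h$-term to $h_1\!\left(p_1(x^s)-\beta_s(\lambda_1^{s+1}-\lambda_1^s)\right)$, which by the $L_{h_1}$-Lipschitz property in Assumption~\ref{ass:hp1} is at least $h_1(p_1(x^s))-L_{h_1}\beta_s\|\lambda_1^{s+1}-\lambda_1^s\|$. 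Writing $H_s(x^s)=H_s^{\star}+(H_s(x^s)-H_s^{\star})$ and using $H_s^{\star}\le H_s(x^{\star})$, I would invoke~\eqref{a:dualityhbeta} to obtain $h(p(x^{\star});\lambda^s,\beta_s)\le h(p(x^{\star}))=h_1(p_1(x^{\star}))$, where feasibility $p_2(x^{\star})\in\cK$ annihilates the $h_2$ part; this gives $H_s(x^{\star})\le F^{\star}+\tfrac{\beta_s}{2}\|x^{\star}-x^{s-1}\|^2$. Discarding the nonnegative proximal term $\tfrac{\beta_s}{2}\|x^s-x^{s-1}\|^2$ assembles exactly the claimed upper bound.

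For the lower bound I would invoke weak duality $F^{\star}=D(\lambda^{\star})\le L(x^s;\lambda^{\star})$, valid by Assumption~\ref{ass:handp}(d). Splitting $L(x^s;\lambda^{\star})$ into its $h_1$ and $h_2$ blocks, the Fenchel--Young inequality $\<\lambda_1^{\star},p_1(x^s)>-h_1^{*}(\lambda_1^{\star})\le h_1(p_1(x^s))$ bounds the first block by $F(x^s)$, leaving $F^{\star}\le F(x^s)+\<\lambda_2^{\star},p_2(x^s)>-h_2^{*}(\lambda_2^{\star})$. Since $h_2^{*}$ is the support function of $\cK$ and $\hat u\in\cK$, we have $h_2^{*}(\lambda_2^{\star})\ge\<\lambda_2^{\star},\hat u>=\<\lambda_2^{\star},p_2(x^s)>-\beta_s\<\lambda_2^{\star},\lambda_2^{s+1}-\lambda_2^s>$, and Cauchy--Schwarz then yields the stated bound.

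The main obstacle is establishing $\hat u\in\cK$ cleanly, i.e.\ correctly identifying $\partial h_2^{*}(\lambda_2^{s+1})$ as a subset of $\cK$ and justifying the blockwise splitting of $\partial h^{*}$; this single structural fact is what lets both the distance inequality and the $h_2$-cancellations in the other two bounds go through. Once it is in place, the remaining work is routine bookkeeping with the smoothing identities and Fenchel--Young, so I expect no further difficulty beyond keeping track of the sign of the dropped proximal term.
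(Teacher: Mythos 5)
Your proof is correct and follows essentially the same route as the paper's: both hinge on the fact that $p_2(x^s)-\beta_s(\lambda_2^{s+1}-\lambda_2^s)\in\cK$ (via~\eqref{a:optimalitycondition} and the support-function structure of $h_2^*$), use the identity~\eqref{a:hbeta} together with the $L_{h_1}$-Lipschitz property for the upper bound, use~\eqref{a:dualityhbeta} and feasibility of $x^\star$ to bound $H_s^\star$, and combine strong duality with Fenchel--Young for the lower bound. The only cosmetic differences are that the paper invokes strong convexity of $H_s$ and then discards the resulting extra term where you simply use $H_s^\star\le H_s(x^\star)$, and that its lower bound passes through $-\|\lambda_2^\star\|\dist(p_2(x^s),\cK)$ and the third inequality rather than using the shifted point directly as the test point.
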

	
	\begin{coro}\label{coro:them1c}
		Consider Algorithm~\ref{ipALM} with $\beta_s=\beta_0 \rho^s$ and $\epsilon_s=\epsilon_0 \eta^s$ for some $0<\eta<\rho<1$. Then to obtain a solution $x^s$ such that
		\begin{align}\label{a:ef}
			\left |\bE[F(x^s)-F^\star]\right | \leq \epsilon,\enspace \bE[ \dist(p_2(x^s),\cK)]\leq \epsilon,
		\end{align}
		if suffices to run Algorithm~\ref{ipALM} for 
		\begin{align}\label{a:sbound}
			s\geq \frac{\ln(c_1/\epsilon)}{\ln 1/\rho}
		\end{align}
		number of outer iterations where 
		\begin{align}\label{a:defc1}
			c_1:= \max(\epsilon_0+ 2L_{h_1}^2 \beta_0 +c_0\beta_0, \beta_0\|\lambda_2^\star\| \sqrt{c_0}, \beta_0 \sqrt{c_0})
		\end{align}
		with $c_0$ defined in~\eqref{a:defc0}.
	\end{coro}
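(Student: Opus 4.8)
The plan is to take expectations in the three pathwise bounds of Theorem~\ref{rock2}, estimate each resulting term by means of Corollary~\ref{coro:bound}, show that all three controlled quantities are dominated by the single geometric expression $c_1\rho^s$, and finally invert $c_1\rho^s\le\epsilon$ to read off the required number of outer iterations. Throughout, I would use that $\beta_s=\beta_0\rho^s$, $\epsilon_s=\epsilon_0\eta^s$ and $0<\eta<\rho<1$.

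First I would dispatch the feasibility bound and the lower bound on the value gap, which are immediate. Taking expectations in the third inequality of Theorem~\ref{rock2} and using $\|\lambda_2^{s+1}-\lambda_2^s\|\le\|\lambda^{s+1}-\lambda^s\|$ together with $\bE[\|\lambda^{s+1}-\lambda^s\|]\le\sqrt{c_0}$ from Corollary~\ref{coro:bound} gives $\bE[\dist(p_2(x^s),\cK)]\le\beta_0\rho^s\sqrt{c_0}$. The same two facts applied to the second inequality give $\bE[F(x^s)-F^\star]\ge-\beta_0\rho^s\|\lambda_2^\star\|\sqrt{c_0}$. Both right-hand sides are at most $c_1\rho^s$ by the definition of $c_1$ in~\eqref{a:defc1}.

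The main work is the upper bound on $\bE[F(x^s)-F^\star]$, which after taking expectations in the first inequality of Theorem~\ref{rock2} splits into four terms. The subproblem gap $\bE[H_s(x^s)-H_s^\star]$ is at most $\epsilon_s=\epsilon_0\eta^s$ by the stopping criterion of Algorithm~\ref{ipALM}. The two quadratic terms are each bounded by $\tfrac{\beta_s}{2}c_0$: for the term $\tfrac{\beta_s}{2}(\|\lambda^s\|^2-\|\lambda^{s+1}\|^2)$ I would drop $-\|\lambda^{s+1}\|^2\le0$ and invoke $\bE[\|\lambda^s\|^2]\le c_0$, while for $\tfrac{\beta_s}{2}\bE[\|x^\star-x^{s-1}\|^2]$ I would use $\bE[\|x^{s-1}-x^\star\|^2]\le c_0$ (valid for $s\ge1$ by Corollary~\ref{coro:bound}, and directly from the definition~\eqref{a:defc0} of $c_0$ when $s=0$, since $c_0\ge2\|x^{-1}-x^\star\|^2$). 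The step I expect to be the crux is controlling $L_{h_1}\beta_s\|\lambda_1^{s+1}-\lambda_1^s\|$ by $2L_{h_1}^2\beta_s$ so as to match $c_1$; bounding this difference by $\sqrt{c_0}$ would not reproduce the stated constant. Here I would use that $L_{h_1}$-Lipschitz continuity of $h_1$ (Assumption~\ref{ass:handp}\ref{ass:hp1}) forces $\dom(h_1^*)\subseteq\{v_1:\|v_1\|\le L_{h_1}\}$, and that each dual iterate $\lambda_1^{t}$ lies in $\dom(h_1^*)$ (it is produced by the maximization defining $\Lambda$, cf.~\eqref{a:Lambda}, and the initial $\lambda^0\in\dom(h^*)$). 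Hence $\|\lambda_1^{s+1}-\lambda_1^s\|\le\|\lambda_1^{s+1}\|+\|\lambda_1^s\|\le2L_{h_1}$ surely, so this term contributes at most $2L_{h_1}^2\beta_s$. Summing the four estimates and using $\eta^s\le\rho^s$ yields $\bE[F(x^s)-F^\star]\le(\epsilon_0+2L_{h_1}^2\beta_0+c_0\beta_0)\rho^s\le c_1\rho^s$.

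Combining the three estimates gives $|\bE[F(x^s)-F^\star]|\le c_1\rho^s$ and $\bE[\dist(p_2(x^s),\cK)]\le c_1\rho^s$. Finally I would impose $c_1\rho^s\le\epsilon$; taking logarithms and dividing by $\ln(1/\rho)>0$ (which reverses the inequality since $\ln\rho<0$) gives exactly $s\ge\ln(c_1/\epsilon)/\ln(1/\rho)$, namely~\eqref{a:sbound}, so that for any such $s$ both criteria in~\eqref{a:ef} hold.
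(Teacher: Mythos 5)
Your proposal is correct and follows essentially the same route as the paper: the paper's proof likewise takes expectations in the three bounds of Theorem~\ref{rock2}, relaxes them via Corollary~\ref{coro:bound} to exactly the estimates $\epsilon_s+2L_{h_1}^2\beta_s+c_0\beta_s$, $-\beta_s\|\lambda_2^\star\|\sqrt{c_0}$ and $\beta_s\sqrt{c_0}$, and then inverts $c_1\rho^s\le\epsilon$. The only difference is that you spell out the step the paper leaves implicit, namely that $L_{h_1}$-Lipschitz continuity of $h_1$ forces $\|\lambda_1^t\|\le L_{h_1}$ for every dual iterate (the same mechanism as~\eqref{aseff} in the appendix), which is precisely how the $2L_{h_1}^2\beta_s$ term arises.
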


	\section{Recursive Relation of Inexactness}\label{sec:rri}
	The main objective of this section is to show that the  initial error  $H_{s+1}(x^s)-H_{s+1}^\star$ of the inner problem at iteration $s+1$ in Algorithm~\ref{ipALM} can be upper bounded using the last step error  $H_s(x^s)-H_s^\star$ and some computable quantities.   The bound will yield a way to control the number of inner iterations.
	The key proposition of this section is as follows.
	\begin{proposition}\label{prop:main2}
		Consider Algorithm~\ref{ipALM}.
		If $\beta_s\geq \beta_{s+1}> \beta_s/2$, then
		\begin{equation}\label{eq:mainpop2}
		\begin{array}{ll}
		& H_{s+1}(x^s) -H_{s+1}^\star
		\\&\leq 2(H_{s}(x^s) -H_{s}^\star)+{{\beta_s}}\|\lambda^{s+1}-\lambda^{s}\|^2+
		\frac{\beta_s-\beta_{s+1}}{2}\| \Lambda(p(x^s); \lambda^{s+1},\beta_{s+1})-\lambda^{s+1}\|^2+
		\frac{\beta_s^2}{2\beta_{s+1}-\beta_s}\|x^{s-1}-x^s\|^2\\&\quad+ \|\lambda^{s+1}-\lambda^{s}\|\sqrt{ \left((\beta_s+\beta_{s+1})L_{h_1}+\|\beta_s \lambda_1^s-\beta_{s+1}\lambda_1^{s+1}\| \right)^2
			+\|\beta_s \lambda_2^s-\beta_{s+1}\lambda_2^{s+1}\|^2 }
		\end{array}.
		\end{equation}
	\end{proposition}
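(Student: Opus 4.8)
The plan is to control the next initial error through the telescoping decomposition
\begin{equation*}
H_{s+1}(x^s)-H_{s+1}^\star = \bigl(H_{s+1}(x^s)-H_s(x^s)\bigr) + \bigl(H_s(x^s)-H_s^\star\bigr) + \bigl(H_s^\star - H_{s+1}^\star\bigr),
\end{equation*}
bounding the three brackets separately. The middle bracket already supplies one copy of $H_s(x^s)-H_s^\star$; the remaining copy (the factor $2$) will be consumed when estimating the last bracket, where I can only compare the two subproblems at the available iterate rather than at the exact minimizer. Terms $T_2,T_3$ (i.e.\ $\beta_s\|\lambda^{s+1}-\lambda^s\|^2$ and $\tfrac{\beta_s-\beta_{s+1}}{2}\|\Lambda(p(x^s);\lambda^{s+1},\beta_{s+1})-\lambda^{s+1}\|^2$) will come from the first bracket, and $T_4,T_5$ from the last.

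\emph{First bracket.} Since $H_{s+1}(x)=L(x;x^s,\lambda^{s+1},\beta_{s+1})$, the proximal term vanishes at $x=x^s$, so by \eqref{a:pAL} $H_{s+1}(x^s)=f(x^s)+g(x^s)+h(p(x^s);\lambda^{s+1},\beta_{s+1})$, whereas $H_s(x^s)$ carries the extra term $\tfrac{\beta_s}{2}\|x^s-x^{s-1}\|^2$. Hence the bracket reduces to comparing the smoothed values $h(p(x^s);\lambda^{s+1},\beta_{s+1})$ and $h(p(x^s);\lambda^s,\beta_s)$. I would evaluate each via its defining maximizer in \eqref{a:hbetadef}: the first at $\mu^s:=\Lambda(p(x^s);\lambda^{s+1},\beta_{s+1})$ and the second at its maximizer, which is exactly $\lambda^{s+1}$ by the dual update rule. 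Writing the difference out, the inner products against $p(x^s)$ cancel once I lower-bound $h^*(\mu^s)$ by convexity using the subgradient $p(x^s)-\beta_s(\lambda^{s+1}-\lambda^s)\in\partial h^*(\lambda^{s+1})$ from \eqref{a:optimalitycondition}; a single Young inequality on the remaining cross term then gives $H_{s+1}(x^s)-H_s(x^s)\le \beta_s\|\lambda^{s+1}-\lambda^s\|^2+\tfrac{\beta_s-\beta_{s+1}}{2}\|\mu^s-\lambda^{s+1}\|^2-\tfrac{\beta_s}{2}\|x^s-x^{s-1}\|^2$, i.e.\ $T_2$, $T_3$, and a negative proximal slack.

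\emph{Last bracket (the crux).} To bound $H_s^\star-H_{s+1}^\star$ I would use the infimal-convolution form \eqref{a:dualityhbeta} to write both optimal values as joint minima over $(x,w)$ of $f(x)+g(x)+h(p(x)-w)+\tfrac1{2\beta}\|w\|^2+\langle w,\lambda\rangle+\tfrac{\beta}{2}\|x-y\|^2$, and evaluate the $H_s$-objective at the optimal pair $(x_{s+1}^\star,w_{s+1}^\star)$ of the $H_{s+1}$-problem. Because $\beta_s\ge\beta_{s+1}$ the $\tfrac1{2\beta}\|w\|^2$ discrepancy has favorable sign and is discarded, leaving two pieces. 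The proximal piece $\tfrac{\beta_s}{2}\|x_{s+1}^\star-x^{s-1}\|^2-\tfrac{\beta_{s+1}}{2}\|x_{s+1}^\star-x^s\|^2$, after the substitution $x_{s+1}^\star-x^{s-1}=(x_{s+1}^\star-x^s)+(x^s-x^{s-1})$, produces $\tfrac{\beta_s}{2}\|x^s-x^{s-1}\|^2$ (cancelling the slack from the first bracket) plus a residual quadratic in $x_{s+1}^\star-x^s$; completing the square against the curvature $\tfrac{2\beta_{s+1}-\beta_s}{2}\|x_{s+1}^\star-x^s\|^2$ — positive precisely under the hypothesis $\beta_{s+1}>\beta_s/2$ — and paying for the leftover with the spare copy of $H_s(x^s)-H_s^\star$ is what yields $T_4$ and the factor $2$. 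The linear piece $\langle w_{s+1}^\star,\lambda^s-\lambda^{s+1}\rangle$, combined with the shift in the argument of $h$, I would bound by Cauchy--Schwarz against $\lambda^{s+1}-\lambda^s$; the two coordinate blocks are treated asymmetrically, using $\dom(h_1^*)\subseteq\{\,\|\cdot\|\le L_{h_1}\,\}$ (from the $L_{h_1}$-Lipschitzness of $h_1$) on the first block, which generates the $(\beta_s+\beta_{s+1})L_{h_1}$ summand, and the feasibility built into the indicator $h_2$ on the second block, which leaves the clean vector $\beta_s\lambda_2^s-\beta_{s+1}\lambda_2^{s+1}$; this is exactly $T_5$.

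The main obstacle is this last bracket: unlike the first, it forces a comparison at the unknown minimizer $x_{s+1}^\star$ of the next subproblem, so the estimate must absorb both the proximal-curvature mismatch between consecutive centers and regularization parameters, and the unbounded dual block associated with the indicator $h_2$. The two devices that make it go through are the completing-the-square step — which is exactly why the hypothesis $\beta_{s+1}>\beta_s/2$ (equivalently $2\beta_{s+1}-\beta_s>0$) is needed — and the Lipschitz bound on the conjugate of $h_1$, which tames the otherwise uncontrolled first block. Assembling the three brackets and cancelling the $\pm\tfrac{\beta_s}{2}\|x^s-x^{s-1}\|^2$ slack then gives \eqref{eq:mainpop2}.
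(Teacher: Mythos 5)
Your three-bracket telescoping is, in substance, the paper's own decomposition (its $\Delta_1$ and $\Delta_2$ are your middle and first brackets, and its $\Delta_3+\Delta_4$ is your last bracket merged into one), and your first bracket is handled correctly: using $\lambda^{s+1}=\Lambda(p(x^s);\lambda^s,\beta_s)$, the subgradient from~\eqref{a:optimalitycondition} and one Young step does give $H_{s+1}(x^s)-H_s(x^s)\le \beta_s\|\lambda^{s+1}-\lambda^s\|^2+\tfrac{\beta_s-\beta_{s+1}}{2}\|\Lambda(p(x^s);\lambda^{s+1},\beta_{s+1})-\lambda^{s+1}\|^2-\tfrac{\beta_s}{2}\|x^s-x^{s-1}\|^2$. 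The genuine gap is in the last bracket. Write $x^\star_{s+1}:=x^\star(x^s,\lambda^{s+1},\beta_{s+1})$, $x^\star_s:=x^\star(x^{s-1},\lambda^s,\beta_s)$, $p^\star_s:=p(x^\star_s)$, and let $J_s(x,w)$ be the joint objective whose minimum over $(x,w)$ is $H_s^\star$. Your plan uses only the plain evaluation $H_s^\star\le J_s(x^\star_{s+1},w^\star_{s+1})$, and after your substitution the coefficient of $\|x^\star_{s+1}-x^s\|^2$ that this leaves is $\tfrac{\beta_s-\beta_{s+1}}{2}\ge 0$: the negative curvature $-\tfrac{2\beta_{s+1}-\beta_s}{2}\|x^\star_{s+1}-x^s\|^2$ you propose to complete the square against does not exist anywhere in your decomposition, and a quadratic with nonnegative curvature plus the cross term $\beta_s\langle x^\star_{s+1}-x^s,x^s-x^{s-1}\rangle$ has supremum $+\infty$ in $x^\star_{s+1}$ — nothing in your brackets controls $\|x^\star_{s+1}-x^s\|$. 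The missing idea is precisely the paper's Lemma~\ref{l:sL2}: since the $H_s$-objective is $\beta_s$-strongly convex in $x$ and $\beta_s^{-1}$-strongly convex in $w$ (jointly convex by~\eqref{a:herffg}), evaluation at the foreign minimizer can be strengthened to $H_s^\star \le J_s(x^\star_{s+1},w^\star_{s+1})-\tfrac{\beta_s}{2}\|x^\star_{s+1}-x^\star_s\|^2-\tfrac{\beta_s}{2}\|\Lambda(p(x^\star_{s+1});\lambda^s,\beta_s)-\Lambda(p^\star_s;\lambda^s,\beta_s)\|^2$. It is the term $-\tfrac{\beta_s}{2}\|x^\star_{s+1}-x^\star_s\|^2$ that, combined with the two proximal terms through the paper's Lemma~\ref{l:erfgrte}, makes the square completable exactly when $2\beta_{s+1}>\beta_s$, with leftover $\tfrac{\beta_s}{2}\|x^\star_s-x^s\|^2$ plus your $T_4$; and it is that leftover — a distance to the \emph{current} minimizer $x^\star_s$, an object your sketch never introduces — that the spare copy of $H_s(x^s)-H_s^\star$ pays for, via Lemma~\ref{l:sL2} applied a second time at $x=x^s$.

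The same omission breaks your linear piece. Cauchy--Schwarz cannot bound $\langle w^\star_{s+1},\lambda^s-\lambda^{s+1}\rangle$ by your $T_5$, because $\|w^\star_{s+1}\|$ itself is not bounded by the $T_5$ coefficient: by~\eqref{a:wstar} and Remark~\ref{rem:hc}, in the case $\cK=\{b\}$ the $h_2$-block of $w^\star_{s+1}$ equals $p_2(x^\star_{s+1})-b$, which is in no way controlled by $\|\beta_s\lambda_2^s-\beta_{s+1}\lambda_2^{s+1}\|$. What the Lipschitz/cone argument (the paper's Lemma~\ref{l::boundbetabetap}) bounds is only the \emph{difference} $\|w^\star_{s+1}-\beta_s(\Lambda(p(x^\star_{s+1});\lambda^s,\beta_s)-\lambda^s)\|$ of two such vectors at the same argument $p(x^\star_{s+1})$; after inserting that difference, the remaining term $\beta_s\langle\Lambda(p(x^\star_{s+1});\lambda^s,\beta_s)-\lambda^s,\lambda^s-\lambda^{s+1}\rangle$ still contains the uncontrolled dual point $\Lambda(p(x^\star_{s+1});\lambda^s,\beta_s)$, and absorbing it requires exactly the dual half of the strengthened inequality above together with the dual half of the spare copy of $H_s(x^s)-H_s^\star$. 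So your architecture, and your reading of where the hypothesis $\beta_{s+1}>\beta_s/2$ and the factor $2$ enter, are right, but both hinge on the two insertions of Lemma~\ref{l:sL2} that your proof never makes; once they are made, your outline assembles into~\eqref{eq:mainpop2} with the stated constants and coincides with the paper's argument (Proposition~\ref{prop:m2} plus the final absorption step). The only device available strictly inside your sketch — quadratic growth of $H_{s+1}$ itself, $H_{s+1}(x^s)-H_{s+1}^\star\ge\tfrac{\beta_{s+1}}{2}\|x^s-x^\star_{s+1}\|^2$ — leads instead to a self-bounding inequality whose solution carries the factor $\tfrac{2\beta_{s+1}}{2\beta_{s+1}-\beta_s}$ (strictly larger than $2$ whenever $\beta_{s+1}<\beta_s$) in front of $H_s(x^s)-H_s^\star$, and still leaves the dual block unresolved.
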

	
	We defer the proof of Proposition~\ref{prop:main2} in Section~\ref{sec:H}.  In the next section we show how to make use of  Proposition~\ref{prop:main2} to get an implementable form of Algorithm~\ref{ipALM}. Hereinafter we assume that we have at our disposal  an algorithm $ \cA$ suitable for solving each inner problem in Algorithm~\ref{ipALM}:
	\begin{align}\label{a:minHs}
		\min_x H_s(x).
	\end{align}
	Denote by $\cA(x, k, H_s)$ the output obtained by running $k$ iterations of Algorithm $\cA$   on problem~\eqref{a:minHs}  starting with  initial solution $x$.
	We only consider those inner solvers $\cA$  satisfying the following requirement. 
	\begin{assumption}[Linearly Convergent Inner Solver]\label{ass:innersolver}
		For any outer iteration $s\in\{0,1,\dots\}$ of Algorithm~\ref{ipALM},  there is $ K_s\geq 1$ such that for any $x\in \dom(g)$,
		\begin{align}\label{a:etasdef}
			\bE\left[H_s\left(  \cA(x, k, H_s)\right)-H_s^\star| \cF_{s-1}\right]\leq {2^{-\lfloor k/ K_s\rfloor}}   \left( H_s(x)-H_s^\star\right).
		\end{align}
	\end{assumption}
	We will give in Section~\ref{sec:total} some examples of algorithms satisfying these properties.
	
	\subsection{Inner Iteration Complexity Control for ALM}
	In this section we apply Proposition~\ref{prop:main2} to derive an implementable form of Algorithm~\ref{ipALM}.  
	\begin{coro}\label{coro:dfd2}
		Consider Algorithm~\ref{ipALM} with $\beta_s\geq \beta_{s+1}> \beta_s/2$.
		Let $m_{s+1}>0$ be an integer satisfying
		\begin{equation}\label{eq:ms2}
		\begin{array}{ll}&2\epsilon_s+{{\beta_s}}\|\lambda^{s+1}-\lambda^{s}\|^2+
		\frac{\beta_s-\beta_{s+1}}{2}\| \Lambda(p(x^s); \lambda^{s+1},\beta_{s+1})-\lambda^{s+1}\|^2+ \frac{\beta_s^2}{2\beta_{s+1}-\beta_s}\|x^{s-1}-x^s\|^2\\&+ \|\lambda^{s+1}-\lambda^{s}\|\sqrt{ \left((\beta_s+\beta_{s+1})L_{h_1}+\|\beta_s \lambda_1^s-\beta_{s+1}\lambda_1^{s+1}\| \right)^2
			+\|\beta_s \lambda_2^s-\beta_{s+1}\lambda_2^{s+1}\|^2 } \leq {2^{\lfloor m_{s+1}/ K_{s+1}\rfloor}}  {\epsilon_{s+1}/2 }
		\end{array}.
		\end{equation}
		If $\bE\left [{H_s}(x^s)- H_s^\star\right ]\leq \epsilon_{s}$, then 
		\begin{align}\label{a:dererdf2}
			\bE\left [{H_{s+1}}(x^{s+1})- H_{s+1}^\star\right ]\leq \epsilon_{s+1}, 
		\end{align}
		is guaranteed by letting
		$$x^{s+1}= \cA(x^{s}, m_{s+1}, H_{s+1}).$$
	\end{coro}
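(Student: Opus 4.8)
The plan is to chain the linear-convergence guarantee of the inner solver (Assumption~\ref{ass:innersolver}) with the one-step inexactness recursion (Proposition~\ref{prop:main2}), and then exploit the defining inequality~\eqref{eq:ms2} of $m_{s+1}$ to absorb the random number of inner iterations. Throughout, write $\delta_s := H_s(x^s)-H_s^\star \ge 0$ and let $T_s$ denote the sum of the four ``computable'' terms on the right-hand side of~\eqref{eq:mainpop2} (equivalently, the quantity added to $2\epsilon_s$ on the left-hand side of~\eqref{eq:ms2}). Since $x^s$ and $\lambda^{s+1}=\Lambda(p(x^s);\lambda^s,\beta_s)$ are $\cF_s$-measurable, so are $H_{s+1}$, the constant $K_{s+1}$, the quantity $T_s$, the chosen integer $m_{s+1}$, and hence $\gamma := 2^{-\lfloor m_{s+1}/ K_{s+1}\rfloor}$.

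First I would apply Assumption~\ref{ass:innersolver} at outer iteration $s+1$, conditioning on $\cF_s$, with starting point $x=x^s$ and $k=m_{s+1}$; because $x^s$ is $\cF_s$-measurable and the fresh randomness of $\cA$ enters only through the $\cF_s$-conditional law, recalling $x^{s+1}=\cA(x^s,m_{s+1},H_{s+1})$ this yields
\begin{align*}
\bE[\delta_{s+1}\mid \cF_s] \le \gamma\,(H_{s+1}(x^s)-H_{s+1}^\star).
\end{align*}
Next, since $\beta_s \ge \beta_{s+1} > \beta_s/2$, Proposition~\ref{prop:main2} applies pointwise and gives $H_{s+1}(x^s)-H_{s+1}^\star \le 2\delta_s + T_s$, so that
\begin{align*}
\bE[\delta_{s+1}\mid \cF_s] \le \gamma\,(2\delta_s + T_s).
\end{align*}

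The crux is that $\gamma$ is a random, $\cF_s$-measurable multiplier correlated with $\delta_s$, so one cannot factor the expectation or replace $\gamma$ by its bound inside a product with $\delta_s$. The remedy is to read~\eqref{eq:ms2} as the pointwise bound $\gamma \le (\epsilon_{s+1}/2)/(2\epsilon_s + T_s)$ and to split the two summands. Using $T_s \ge 0$ gives $\gamma T_s \le \gamma(2\epsilon_s+T_s) \le \epsilon_{s+1}/2$ almost surely; using $2\epsilon_s + T_s \ge 2\epsilon_s$ together with $\delta_s \ge 0$ converts the random multiplier into a \emph{deterministic} coefficient,
\begin{align*}
2\gamma\delta_s \le \frac{\epsilon_{s+1}}{2\epsilon_s}\,\delta_s \qquad \text{almost surely}.
\end{align*}
Because $\epsilon_{s+1}/(2\epsilon_s)$ is deterministic, taking total expectation and invoking the hypothesis $\bE[\delta_s]\le \epsilon_s$ yields $\bE[2\gamma\delta_s]\le \epsilon_{s+1}/2$, whence $\bE[\delta_{s+1}]=\bE\!\left[\bE[\delta_{s+1}\mid \cF_s]\right] \le \epsilon_{s+1}/2+\epsilon_{s+1}/2 = \epsilon_{s+1}$, which is~\eqref{a:dererdf2}.

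I expect the main obstacle to be exactly this handling of the randomness: both the inner-iteration count $m_{s+1}$ (through $\gamma$) and the starting accuracy $\delta_s$ depend on $x^s$, so they are not independent, and a careless argument that pulls $\gamma$ out of the expectation or uses only $\gamma\le 1$ would lose the contraction and fail to close the recursion. Converting $\gamma$ into the deterministic factor $\epsilon_{s+1}/(2\epsilon_s)$ via the $T_s\ge 0$ slack in~\eqref{eq:ms2} is precisely what makes the hypothesis $\bE[\delta_s]\le\epsilon_s$ usable; the remaining measurability bookkeeping — that $m_{s+1}$, $K_{s+1}$, $T_s$ are $\cF_s$-measurable, so that Assumption~\ref{ass:innersolver} may legitimately be invoked conditionally with the random start $x^s$ — is routine but should be stated explicitly.
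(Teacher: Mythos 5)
Your proposal is correct and follows essentially the same route as the paper's proof: both invoke Assumption~\ref{ass:innersolver} conditionally on $\cF_s$, bound $H_{s+1}(x^s)-H_{s+1}^\star$ via Proposition~\ref{prop:main2}, and then split the defining inequality~\eqref{eq:ms2} into the two pointwise bounds $2^{-\lfloor m_{s+1}/K_{s+1}\rfloor}\leq \epsilon_{s+1}/(4\epsilon_s)$ and $2^{-\lfloor m_{s+1}/K_{s+1}\rfloor}M_s\leq \epsilon_{s+1}/2$ before taking total expectation. Your explicit discussion of measurability and of why the random multiplier must be converted into the deterministic factor $\epsilon_{s+1}/(2\epsilon_s)$ is exactly the (implicit) content of the paper's final two displays, just spelled out more carefully.
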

	\begin{proof}
		Denote
		\begin{equation}\label{eq:Ms}
		\begin{array}{l}
		M_s:={{\beta_s}}\|\lambda^{s+1}-\lambda^{s}\|^2+
		\frac{\beta_s-\beta_{s+1}}{2}\| \Lambda(p(x^s); \lambda^{s+1},\beta_{s+1})-\lambda^{s+1}\|^2 + \frac{\beta_s^2}{2\beta_{s+1}-\beta_s}\|x^{s-1}-x^s\|^2
		\\
		\qquad\qquad\qquad+\|\lambda^{s+1}-\lambda^{s}\|\sqrt{ \left((\beta_s+\beta_{s+1})L_{h_1}+\|\beta_s \lambda_1^s-\beta_{s+1}\lambda_1^{s+1}\| \right)^2
			+\|\beta_s \lambda_2^s-\beta_{s+1}\lambda_2^{s+1}\|^2 } \end{array}.
		\end{equation}
		By~\eqref{a:etasdef}, we have
		$$
		\bE\left[H_{s+1}\left( x^{s+1}\right)-H_{s+1}^\star|\cF_s\right ]\leq {2^{-\lfloor m_{s+1}/K_{s+1}\rfloor}} \left( H_{s+1}(x^s)-H_{s+1}^\star\right).
		$$
		Then we apply Proposition~\ref{prop:main2} and obtain
		\begin{equation}
		\begin{array}{l}
		\bE\left [H_{s+1}\left( x^{s+1}\right)-H_{s+1}^\star| \cF_s \right]\leq  {2^{1-\lfloor m_{s+1}/K_{s+1}\rfloor}}  \left( H_{s}(x^s)-H_{s}^\star\right)+ {2^{-\lfloor m_{s+1}/K_{s+1}\rfloor}} M_s .
		\end{array}
		\end{equation}
		If~\eqref{eq:ms2} holds, then
		$$
		{2^{-\lfloor m_{s+1}/K_{s+1}\rfloor}} \leq \frac{\epsilon_{s+1}}{4\epsilon_s}, \enspace {2^{-\lfloor m_{s+1}/K_{s+1}\rfloor}} M_s\leq \frac{\epsilon_{s+1}}{2}.
		$$
		It follows that
		$$
		\bE\left [H_{s+1}\left( x^{s+1}\right)-H_{s+1}^\star| \cF_s\right ]\leq \frac{\epsilon_{s+1}}{2\epsilon_s} \left( H_{s}(x^s)-H_{s}^\star\right)+ \frac{\epsilon_{s+1}}{2}.
		$$
		Then~\eqref{a:dererdf2} is guaranteed by taking expectation on both sides of the last inequality.
	\end{proof}

	\begin{remark}
		All the values involved in~\eqref{eq:ms2} are computable. 
	\end{remark}
	Then an instantiation of Algorithm~\ref{ipALM} with  inner solver $ \cA$ and explicit number of inner iterations is given in Algorithm~\ref{ipALM_m}.
	\begin{algorithm}[H]
		\caption{IPALM($ \cA$)}
		\label{ipALM_m}
		\begin{algorithmic}
			\Require   $ \beta_0>0$, $\rho\in(1/2,1)$, $\eta\in (0,1)$, $m_0\in\bN_{++}$
			\Ensure $x^{-1}\in \dom(g)$, $\lambda^0\in \dom(h^{*})$ 
			\State $x^{0}\leftarrow  \cA( x^{-1}, m_0, H_0 )$
			\State $\epsilon^0\geq H_0(x^0)-H_0^\star$ 
			\For {$s=0,1, 2, \ldots$}
			\State $\lambda^{s+ 1}\leftarrow \Lambda(p(x^{s});\lambda^s,\beta_s)$
			\State $\beta_{s+1}=\rho\beta_s$
			\State $\epsilon_{s+1}=\eta\epsilon_s$
			\State choose $m_{s+1}$ to be the smallest integer satisfying~\eqref{eq:ms2}
			\State $x^{s+1}\leftarrow  \cA( x^{s}, m_{s+1}, H_{s+1} )$ 
			\EndFor
		\end{algorithmic}
	\end{algorithm}
	
	\subsection{Overall Iteration Complexity Bound}\label{sec:overall}
	To analyze the total complexity of Algorithm~\ref{ipALM_m}, we will evaluate  $\bE[m_s]$.
	The key step is to show that the expectation of the quantity $M_s$ defined in~\eqref{eq:Ms} can be bounded by some constant times $\beta_s$, provided that the primal and dual sequence $\{(x^s, \lambda^s)\}$ is bounded.
	\begin{lemma}\label{l:erer2}
		Consider Algorithm~\ref{ipALM_m}.
		If there is a constant $c>0$ such that
		\begin{align}\label{a:fgert}
			\max(\bE[\|\lambda^s-\lambda^{s+1}\|^2], \bE[\|x^{s-1}-x^{s}\|^2], \bE[\|\lambda^s\|^2])\leq c,
		\end{align}
		then
		$$
		\bE[M_{s}]\leq \beta_s \left((11+2\rho^{-2})(L^2_{h_1}+c)+(2\rho-1)^{-1}c\right).$$	\end{lemma}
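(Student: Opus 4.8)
The plan is to bound the expectation of each of the four summands of $M_s$ in~\eqref{eq:Ms} separately by a constant multiple of $\beta_s$, and then add them. Throughout I use that $\beta_{s+1}=\rho\beta_s$ with $\rho\in(1/2,1)$, so that $\beta_s-\beta_{s+1}=(1-\rho)\beta_s$ and $2\beta_{s+1}-\beta_s=(2\rho-1)\beta_s>0$; this also confirms $\beta_s\geq\beta_{s+1}>\beta_s/2$, so that the quantities in $M_s$ are well defined. The first summand is immediate: $\bE[\beta_s\|\lambda^{s+1}-\lambda^s\|^2]\leq\beta_s c$ by~\eqref{a:fgert}. The third summand is equally direct, since $\tfrac{\beta_s^2}{2\beta_{s+1}-\beta_s}=\tfrac{\beta_s}{2\rho-1}$, whence its expectation is at most $(2\rho-1)^{-1}\beta_s c$, which is exactly the last term of the claimed bound.

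The crux is the second summand, because $\Lambda(p(x^s);\lambda^{s+1},\beta_{s+1})$ is not one of the algorithm's iterates and so cannot be controlled directly by~\eqref{a:fgert}. Here I would exploit the optimality characterization~\eqref{a:optimalitycondition}. Writing $\mu:=\Lambda(p(x^s);\lambda^{s+1},\beta_{s+1})$ and recalling that $\lambda^{s+1}=\Lambda(p(x^s);\lambda^s,\beta_s)$, \eqref{a:optimalitycondition} gives $p(x^s)-\beta_{s+1}(\mu-\lambda^{s+1})\in\partial h^*(\mu)$ and $p(x^s)-\beta_s(\lambda^{s+1}-\lambda^s)\in\partial h^*(\lambda^{s+1})$. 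Testing the monotonicity inequality of $\partial h^*$ on these two points (the $p(x^s)$ terms cancel) yields $\beta_{s+1}\|\mu-\lambda^{s+1}\|^2\leq\beta_s\langle\lambda^{s+1}-\lambda^s,\mu-\lambda^{s+1}\rangle$, and Cauchy--Schwarz then gives $\|\mu-\lambda^{s+1}\|\leq\rho^{-1}\|\lambda^{s+1}-\lambda^s\|$. Consequently the second summand is at most $\tfrac{(1-\rho)}{2}\rho^{-2}\beta_s\|\lambda^{s+1}-\lambda^s\|^2$, whose expectation is bounded by a constant times $\rho^{-2}\beta_s c$ using~\eqref{a:fgert} and $1-\rho<1$; this is the source of the $2\rho^{-2}$ factor in the claim.

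For the fourth summand I would first tame the square root. Splitting $\beta_s\lambda_i^s-\beta_{s+1}\lambda_i^{s+1}=(\beta_s-\beta_{s+1})\lambda_i^s+\beta_{s+1}(\lambda_i^s-\lambda_i^{s+1})$ for $i=1,2$, using $\|\lambda_i^s\|\leq\|\lambda^s\|$ and $\|\lambda_i^s-\lambda_i^{s+1}\|\leq\|\lambda^s-\lambda^{s+1}\|$ for the subvectors, and bounding $\sqrt{a^2+b^2}\leq a+b$ together with $\beta_s+\beta_{s+1}\leq 2\beta_s$, I get that the whole square-root factor is at most $2\beta_s(L_{h_1}+\|\lambda^s\|+\|\lambda^s-\lambda^{s+1}\|)$. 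Hence the fourth summand is bounded by $2\beta_s\|\lambda^{s+1}-\lambda^s\|(L_{h_1}+\|\lambda^s\|+\|\lambda^s-\lambda^{s+1}\|)$. The products are then linearized with Young's inequality $ab\leq\tfrac12(a^2+b^2)$, after which every factor is one of $L_{h_1}^2$, $\|\lambda^s\|^2$, or $\|\lambda^{s+1}-\lambda^s\|^2$; taking expectations and applying~\eqref{a:fgert} bounds this contribution by a constant multiple of $\beta_s(L_{h_1}^2+c)$.

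Adding the four expectations and collecting the constants into the form $(11+2\rho^{-2})(L_{h_1}^2+c)+(2\rho-1)^{-1}c$ — using $\rho\in(1/2,1)$ to absorb the numerical factors (the $2\rho^{-2}$ coming from the second summand, the $(2\rho-1)^{-1}c$ from the third, and the remaining $11(L_{h_1}^2+c)$ generously covering the first, fourth, and $\rho$-independent part of the second) — yields the stated bound. I expect the second summand to be the main obstacle, since it is the only place where a non-iterate quantity appears and where the monotonicity-of-$\partial h^*$ argument via~\eqref{a:optimalitycondition} is essential; the remaining summands are routine applications of the triangle inequality, Young's inequality, and~\eqref{a:fgert}.
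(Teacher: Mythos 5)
Your proposal is correct, and every step checks out: the monotonicity argument for the second summand is valid (both inclusions come from~\eqref{a:optimalitycondition} evaluated at the same point $p(x^s)$, using $\lambda^{s+1}=\Lambda(p(x^s);\lambda^s,\beta_s)$, so monotonicity of $\partial h^*$ indeed yields $\beta_{s+1}\|\mu-\lambda^{s+1}\|^2\leq\beta_s\langle\lambda^{s+1}-\lambda^s,\mu-\lambda^{s+1}\rangle$ and hence $\|\mu-\lambda^{s+1}\|\leq\rho^{-1}\|\lambda^{s+1}-\lambda^s\|$), and your accounting of constants lands comfortably inside $(11+2\rho^{-2})(L_{h_1}^2+c)+(2\rho-1)^{-1}c$. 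However, your route differs from the paper's at the key step. The paper handles the non-iterate quantity $\Lambda(p(x^s);\lambda^{s+1},\beta_{s+1})$ by invoking Lemma~\ref{l::boundbetabetap} with $u=p(x^s)$: this bounds $\|\beta_{s+1}(\Lambda(p(x^s);\lambda^{s+1},\beta_{s+1})-\lambda^{s+1})-\beta_s(\lambda^{s+1}-\lambda^s)\|$ by the same square-root expression appearing in the fourth summand of~\eqref{eq:Ms}, after which a triangle inequality produces a bound on $\|\Lambda(p(x^s);\lambda^{s+1},\beta_{s+1})-\lambda^{s+1}\|$ that carries $L_{h_1}$-dependent terms; the paper then squares everything with $(a+b)^2\leq 2a^2+2b^2$ and controls the fourth summand via the Cauchy--Schwarz inequality in expectation, $\bE[XY]\leq(\bE[X^2])^{1/2}(\bE[Y^2])^{1/2}$. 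Your treatment buys two things: the bound $\|\mu-\lambda^{s+1}\|\leq\rho^{-1}\|\lambda^{s+1}-\lambda^s\|$ is tighter (no $L_{h_1}$ contamination in the second summand) and relies only on monotonicity of $\partial h^*$, making that step self-contained rather than dependent on the structural Lemma~\ref{l::boundbetabetap} (whose own proof needs the Lipschitz bound on $h_1$ and the cone property of $\partial h_2$); and your linearization $\sqrt{a^2+b^2}\leq a+b$ followed by pointwise Young's inequality keeps the fourth summand entirely elementary, avoiding Cauchy--Schwarz in expectation. What the paper's route buys in exchange is economy: Lemma~\ref{l::boundbetabetap} is already needed elsewhere (e.g., in the proof of Proposition~\ref{prop:main2}), so reusing it keeps the argument uniform, and the paper's coarser estimates are what dictate the stated constant $(11+2\rho^{-2})$, which your sharper analysis shows is generous.
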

	To ensure condition~\eqref{a:fgert}, we can rely on the result from  Corollary~\ref{coro:bound}.
	\begin{proposition}\label{prop:maincomplexity}
		Consider Algorithm~\ref{ipALM_m} with parameters satisfying $\eta<\rho$.
		Then,
		\begin{align}\label{a:mainnbi}
			\sum_{t=1}^s \bE[ m_t] \leq  s+ \sum_{t=1}^s K_t   \left(t\log_2 \frac{\rho}{\eta}+c_2\right)\leq \left( 1+\log_2\frac{\rho}{\eta}+c_2\right)s\sum_{t=1}^s K_t
		\end{align}
		where 
		\begin{align}\label{a:defc2}
			c_2:=\log_2\left( \frac{4}{\eta}+\frac{2\beta_0 \left((11+2\rho^{-2})(L^2_{h_1}+4c_0)+4(2\rho-1)^{-1}c_0\right)}{\epsilon_0 \eta}\right)+1
		\end{align}
		with $c_0$ is defined as in~\eqref{a:defc0}.
	\end{proposition}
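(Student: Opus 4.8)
The plan is to bound each $\bE[m_t]$ separately and then sum. First I would unpack the definition of $m_{s+1}$ in Algorithm~\ref{ipALM_m}. Since $m_{s+1}$ is the smallest integer satisfying~\eqref{eq:ms2}, and writing $M_s$ as in~\eqref{eq:Ms}, condition~\eqref{eq:ms2} reads $2^{\lfloor m_{s+1}/K_{s+1}\rfloor}\ge A_s$ with $A_s:=2(2\epsilon_s+M_s)/\epsilon_{s+1}=4/\eta+2M_s/(\eta\epsilon_s)$, where I used $\epsilon_{s+1}=\eta\epsilon_s$. Because $A_s\ge 4/\eta>1$, the smallest admissible integer satisfies $m_{s+1}\le K_{s+1}\lceil\log_2 A_s\rceil\le K_{s+1}(\log_2 A_s+1)$. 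Here I treat $K_{s+1}$ as deterministic (as it is for the linearly convergent solvers of Section~\ref{sec:total}) and note that $A_s$ is $\cF_s$-measurable, since $M_s$ depends only on $x^{s-1},x^s,\lambda^s,\lambda^{s+1}$, all of which are $\cF_s$-measurable. Taking expectations and pulling out the constant $K_{s+1}$ then reduces everything to estimating $\bE[\log_2 A_s]$, hence $\bE[M_s]$.

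The core estimate is the bound on $\bE[M_s]$, where Lemma~\ref{l:erer2} enters and where the main work lies. To invoke it I must first verify its hypothesis~\eqref{a:fgert}. Since $\eta<\rho$, Corollary~\ref{coro:bound} applies and gives $\bE[\|\lambda^s\|^2]\le c_0$ and $\bE[\|x^s\|^2]\le c_0$. The increments are then controlled by the elementary inequality $\|a-b\|^2\le 2\|a\|^2+2\|b\|^2$, yielding $\bE[\|x^{s-1}-x^s\|^2]\le 4c_0$ and $\bE[\|\lambda^s-\lambda^{s+1}\|^2]\le 4c_0$, so that~\eqref{a:fgert} holds with $c=4c_0$. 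Lemma~\ref{l:erer2} then delivers $\bE[M_s]\le \beta_s B$ with $B:=(11+2\rho^{-2})(L_{h_1}^2+4c_0)+4(2\rho-1)^{-1}c_0$, and I observe that $c_2=\log_2(4/\eta+2\beta_0 B/(\epsilon_0\eta))+1$ is precisely one plus $\log_2$ of the $s=0$ value of $\bE[A_s]$.

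Finally I would combine these through Jensen's inequality. Using $\beta_s/\epsilon_s=(\beta_0/\epsilon_0)(\rho/\eta)^s$ and $(\rho/\eta)^s\ge 1$,
\begin{align*}
\bE[A_s]=\frac{4}{\eta}+\frac{2\bE[M_s]}{\eta\epsilon_s}\le \frac{4}{\eta}+\frac{2\beta_0 B}{\epsilon_0\eta}\Big(\frac{\rho}{\eta}\Big)^s\le\Big(\frac{4}{\eta}+\frac{2\beta_0 B}{\epsilon_0\eta}\Big)\Big(\frac{\rho}{\eta}\Big)^s.
\end{align*}
Concavity of $\log_2$ and Jensen (legitimate because $\epsilon_s$ is deterministic and $A_s$ is $\cF_s$-measurable) give $\bE[\log_2 A_s]\le\log_2\bE[A_s]\le(c_2-1)+s\log_2(\rho/\eta)$, whence $\bE[m_{s+1}]\le K_{s+1}(\bE[\log_2 A_s]+1)\le K_{s+1}(c_2+s\log_2(\rho/\eta))$. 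Re-indexing $t=s+1$ gives $\bE[m_t]\le K_t((t-1)\log_2(\rho/\eta)+c_2)\le 1+K_t(t\log_2(\rho/\eta)+c_2)$, and summing over $t=1,\dots,s$ produces the first inequality in~\eqref{a:mainnbi}. The second inequality is then routine: bound $t\le s$ inside the sum and use $K_t\ge 1$, so the stray $+s$ is absorbed (since $s\le s\sum_t K_t$) and $s\sum_t K_t$ factors out.

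The step I expect to be the main obstacle is the estimate $\bE[M_s]\le\beta_s B$ through Lemma~\ref{l:erer2}, and in particular discharging its hypothesis~\eqref{a:fgert} with the correct constant $c=4c_0$: $M_s$ mixes squared increment terms with the cross term $\|\lambda^{s+1}-\lambda^s\|\sqrt{\cdots}$, whose square root must be bounded in expectation, so the clean reduction of all three quantities in~\eqref{a:fgert} to the uniform second-moment bounds of Corollary~\ref{coro:bound} is the delicate part. The two secondary subtleties are the validity of the Jensen step and the treatment of $K_{s+1}$ as a deterministic factor that may be pulled out of the expectation.
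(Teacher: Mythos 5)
Your proposal is correct and follows essentially the same route as the paper's own proof: Corollary~\ref{coro:bound} (valid since $\eta<\rho$) gives~\eqref{a:fgert} with $c=4c_0$, Lemma~\ref{l:erer2} gives $\bE[M_s]\le C\beta_s$, minimality of $m_{s+1}$ in~\eqref{eq:ms2} gives a logarithmic bound, and concavity of $\log_2$ together with $(\rho/\eta)^s\ge 1$ yields the per-iteration estimate, which is then summed. One harmless slip: the smallest admissible integer is $\lceil K_{s+1}\lceil\log_2 A_s\rceil\rceil$, which can exceed $K_{s+1}\lceil\log_2 A_s\rceil$ when the latter is not an integer, so the bound should read $m_{s+1}\le K_{s+1}\left(\log_2 A_s+1\right)+1$ (as in the paper's intermediate inequality~\eqref{a:ms2}); this extra $+1$ is exactly absorbed by the slack you already built into the step $\bE[m_t]\le 1+K_t\left(t\log_2(\rho/\eta)+c_2\right)$, so your conclusion is unaffected.
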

	
	\begin{theorem}\label{main:theoemalgo2}
		Consider Algorithm~\ref{ipALM_m} with parameters satisfying $\eta<\rho$. If there are three constants $\varsigma\geq 0$, $\omega>0$ and $\ell>0$ such that
		\begin{align}\label{a:Ksbound}
			K_s\leq \frac{\omega}{\beta_s^\ell}+\varsigma,\enspace \forall s\geq 1.
		\end{align}
		Let $\epsilon\leq \epsilon_0$.
		Then to obtain a solution $x^s$ such that
		\begin{equation}\label{eq:eps}
		\left |\bE[F(x^s)-F^\star]\right | \leq \epsilon,\enspace \bE[ \dist(p_2(x^s),\cK)]\leq \epsilon,
		\end{equation}
		the total  expected number of calls of Algorithm $\cA$ is bounded by
		\begin{align}\label{a:wrrrrr}
			\sum_{t=0}^s \bE[m_t] \leq m_0+\frac{c_3}{\epsilon^{\ell}
			} \ln\frac{c_1}{\epsilon\rho}
		\end{align}
		where $c_1$ is defined in~\eqref{a:defc1} and
		\begin{align}\label{a:defc3}
			c_3:=\frac{1+\log_2({\rho}/{\eta})+c_2}{\ln(1/\rho)}\left( \frac{ \varsigma c^\ell_1}{\rho^\ell \ell\ln(1/\rho)}+ \frac{\omega c_1^\ell}{\beta_0^\ell(1-\rho^\ell)} \right),
		\end{align}
		with $c_2$ defined in~\eqref{a:defc2}.
	\end{theorem}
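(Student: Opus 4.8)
The plan is to stitch together two facts already in hand: Corollary~\ref{coro:them1c}, which tells us how many \emph{outer} iterations $s$ guarantee the target~\eqref{eq:eps}, and Proposition~\ref{prop:maincomplexity}, which controls the cumulative \emph{inner} work $\sum_{t=1}^s\bE[m_t]$ in terms of $s$ and $\sum_{t=1}^s K_t$. The hypothesis~\eqref{a:Ksbound}, combined with the geometric decay $\beta_s=\beta_0\rho^s$, will then let me collapse $\sum_{t=1}^s K_t$ into a single $\epsilon^{-\ell}$ term, and the only genuinely delicate point will be converting the $\varsigma$ part of $K_t$ into the right shape.

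First I would take $s$ to be the smallest integer with $s\ge \ln(c_1/\epsilon)/\ln(1/\rho)$. Because the inner stopping rule of Algorithm~\ref{ipALM_m} chooses $m_{t+1}$ exactly so that the expected inexactness $\bE[H_t(x^t)-H_t^\star]\le\epsilon_t$ propagates (Corollary~\ref{coro:dfd2}), Algorithm~\ref{ipALM_m} is a bona fide instance of Algorithm~\ref{ipALM}, so Corollary~\ref{coro:them1c} applies and this $x^s$ satisfies~\eqref{eq:eps}. Minimality of $s$ yields the two-sided control $\rho\epsilon/c_1<\rho^s\le\epsilon/c_1$, hence
\begin{equation*}
s\le \frac{1}{\ln(1/\rho)}\ln\frac{c_1}{\epsilon\rho},\qquad \beta_s^{-\ell}=\beta_0^{-\ell}\rho^{-s\ell}\ \asymp\ \frac{c_1^\ell}{\beta_0^\ell\epsilon^\ell}.
\end{equation*}
This is where both the logarithmic outer factor and the $\epsilon^{-\ell}$ scale originate. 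Note that $c_1\ge\epsilon_0\ge\epsilon$ by~\eqref{a:defc1}, so $c_1/(\epsilon\rho)\ge1$, a fact I will use below.

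Next I would invoke Proposition~\ref{prop:maincomplexity} in its product form, $\sum_{t=1}^s\bE[m_t]\le(1+\log_2(\rho/\eta)+c_2)\,s\sum_{t=1}^s K_t$, and estimate $\sum_{t=1}^s K_t$ through~\eqref{a:Ksbound}. Splitting $K_t\le\omega\beta_t^{-\ell}+\varsigma$, the first piece is the geometric sum $\omega\beta_0^{-\ell}\sum_{t=1}^s\rho^{-t\ell}$, which is dominated by its last term and bounded by $\omega\beta_s^{-\ell}/(1-\rho^\ell)$, whereas the second piece is simply $\varsigma s$. Feeding in the estimates for $s$ and $\beta_s^{-\ell}$ from the previous step turns the $\omega$-piece directly into the $O\!\left(\epsilon^{-\ell}\ln(c_1/(\epsilon\rho))\right)$ summand of $c_3$ built from $\tfrac{\omega c_1^\ell}{\beta_0^\ell(1-\rho^\ell)}$.

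The remaining, and in my view hardest, bookkeeping is the $\varsigma$ piece: after the leading factor $s$ it appears as $\varsigma s^2$, which on its face is only $O((\ln 1/\epsilon)^2)$ and would not generate an $\epsilon^{-\ell}$ term at all. The device I would use is the elementary inequality $\ln u\le u^\ell/\ell$ for $u\ge1$ and $\ell>0$ (proved by monotonicity of $u\mapsto \tfrac{u^\ell-1}{\ell}-\ln u$): applied to $u=c_1/(\epsilon\rho)$ it converts \emph{one} of the two factors $s\le\ln(c_1/(\epsilon\rho))/\ln(1/\rho)$ into $\tfrac{c_1^\ell}{\ell\rho^\ell\epsilon^\ell\ln(1/\rho)}$, leaving the other factor as the bare logarithm; this reproduces exactly the $\tfrac{\varsigma c_1^\ell}{\rho^\ell\ell\ln(1/\rho)}$ summand of $c_3$. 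Collecting the $\omega$- and $\varsigma$-contributions, factoring out $(1+\log_2(\rho/\eta)+c_2)/\ln(1/\rho)$, and finally adding back the deterministic initial count $m_0$ for $t=0$ gives~\eqref{a:wrrrrr} with $c_3$ as in~\eqref{a:defc3}. The main obstacle is precisely this asymmetric treatment of the two logarithmic factors in the $\varsigma$ term: keeping one logarithmic and inflating the other to $\epsilon^{-\ell}$ so that the hybrid bound lands on the stated constant, rather than merely on the correct order $\tilde O(1/\epsilon^{\ell})$.
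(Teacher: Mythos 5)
Your proposal is correct and is essentially the paper's own proof: you take the outer-iteration count from Corollary~\ref{coro:them1c}, apply the product form of Proposition~\ref{prop:maincomplexity}, bound the $\omega$-part of $\sum_{t\le s}K_t$ by a geometric sum dominated by its last term, and use $\ln a\le a$ (your $\ln u\le u^{\ell}/\ell$) to inflate exactly one of the two factors of $s$ multiplying $\varsigma$ into $\epsilon^{-\ell}$ while keeping the other as the logarithm, before adding $m_0$. The only place you do not land \emph{exactly} on~\eqref{a:defc3} is a constant: your (valid) bound $\sum_{t=1}^{s}\rho^{-\ell t}\le\rho^{-\ell s}/(1-\rho^{\ell})$ combined with $\rho^{-s}\le c_1/(\rho\epsilon)$ yields the summand $\omega c_1^{\ell}/\bigl(\beta_0^{\ell}\rho^{\ell}(1-\rho^{\ell})\bigr)$ rather than the paper's $\omega c_1^{\ell}/\bigl(\beta_0^{\ell}(1-\rho^{\ell})\bigr)$; the paper reaches the smaller constant via the step $\sum_{t=1}^{s}\rho^{-\ell t}\le\rho^{-\ell s}/(\rho^{-\ell}-1)$, which fails for large $s$, so your constant (larger by $\rho^{-\ell}$) is the defensible one and the discrepancy is immaterial to the claimed $\tilde O(1/\epsilon^{\ell})$ complexity.
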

	\begin{proof}
		By Corollary~\ref{coro:them1c},~\eqref{eq:eps} holds if $$s\geq \frac{\ln(c_1/\epsilon)}{\ln(1/\rho)}.$$   Thus~\eqref{eq:eps} is true for some integer $s$ satisfying
		\begin{align}\label{a:rrrr}
			s\leq  \frac{\ln(c_1/\epsilon)}{\ln(1/\rho)}+1=\frac{\ln(c_1/(\epsilon \rho))}{\ln(1/\rho)}.
		\end{align}
		Since $\epsilon\leq \epsilon_0$, we know that $\epsilon\leq c_1$ and
		\begin{align}\label{a:trc}
			s\leq \frac{\ln(c_1/(\epsilon \rho))}{\ln(1/\rho)}=\frac{\ln(c^\ell_1/(\epsilon^\ell \rho^\ell))}{\ell\ln(1/\rho)}\leq \frac{c^\ell_1}{\epsilon^\ell \rho^\ell \ell\ln(1/\rho)},
		\end{align}
		where in the last inequality we used $\ln a\leq a$ for any $a\geq 1$.
		In view of~\eqref{a:Ksbound}, we have
		$$
		\sum_{t=1}^s K_t\leq  \varsigma s+\frac{\omega}{\beta_0^\ell} \sum_{t=1}^s   \rho^{-\ell t}\le \varsigma s+\frac{\omega \rho^{-\ell s}}{\beta_0^\ell(\rho^{-\ell}-1)}  \overset{\eqref{a:rrrr}}{\leq}
		\varsigma s+  \frac{\omega c_1^\ell}{\beta_0^\ell(1-\rho^\ell)\epsilon^\ell}
		\overset{\eqref{a:trc}}{\leq} \left( \frac{ \varsigma c^\ell_1}{\rho^\ell \ell\ln(1/\rho)}+ \frac{\omega c_1^\ell}{\beta_0^\ell(1-\rho^\ell)} \right)\frac{1}{\epsilon^\ell}
		$$
		Then we apply Proposition~\eqref{prop:maincomplexity} to obtain
		\begin{align*}
			\sum_{t=1}^s \bE[m_t]&\leq s \left( 1+\log_2({\rho}/{\eta})+c_2\right)  \left( \frac{ \varsigma c^\ell_1}{\rho^\ell \ell\ln(1/\rho)}+ \frac{\omega c_1^\ell}{\beta_0^\ell(1-\rho^\ell)} \right)\frac{1}{\epsilon^\ell} \\& \overset{\eqref{a:rrrr}}{\leq} 
			\frac{1+\log_2({\rho}/{\eta})+c_2}{\ln(1/\rho)}\left( \frac{ \varsigma c^\ell_1}{\rho^\ell \ell\ln(1/\rho)}+ \frac{\omega c_1^\ell}{\beta_0^\ell(1-\rho^\ell)} \right)\frac{1}{\epsilon^\ell}\ln\frac{c_1}{\epsilon\rho}.
		\end{align*}
	\end{proof}
	
	To facilitate the comparison of complexity of different inner solvers, hereinafter we hide the logarithm terms and those constants independent with the inner solver into the $\tilde O$ notation. We also hide the constant $\ell$ since we only compare inner solvers with the same  order $\ell$.
	\begin{coro}\label{coro:main}Under the premise of Theorem~\ref{main:theoemalgo2},  to obtain an $\epsilon$-optimal solution in the sense of~\eqref{eq:eps}, the number of calls of the inner solver $\cA$ is bounded by
		$$
		\tilde O\left(\frac{\omega+\varsigma}{\epsilon^\ell}\right)
		$$
		where the $\tilde O$ hides logarithm terms, and
		constants related to  the inner solver convergence order $\ell$ and other inner solver independent constants  $c_1, c_2$, $\rho$, $\eta$, $\beta_0$, $\epsilon_0$ and  $m_0$. 
	\end{coro}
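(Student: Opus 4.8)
The plan is to read the claim off directly from the explicit bound~\eqref{a:wrrrrr} of Theorem~\ref{main:theoemalgo2}, treating the corollary as an exercise in isolating the dependence on the inner solver. Recall that the solver $\cA$ enters the whole analysis only through the linear-convergence constants $K_s$, and that by hypothesis~\eqref{a:Ksbound} these are controlled entirely by the three quantities $\omega$, $\varsigma$ and $\ell$. The remaining constants $c_0$, $c_1$, $c_2$ (defined in~\eqref{a:defc0},~\eqref{a:defc1},~\eqref{a:defc2}) depend only on the problem data ($L_{h_1}$, $x^\star$, $\lambda^\star$, the initializers $x^{-1},\lambda^0$) and the algorithm parameters $\beta_0,\epsilon_0,\rho,\eta$; none of them involves $\cA$. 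The first step is therefore to verify this solver-independence by inspection of the three definitions, so that in what follows $c_1$ and $c_2$ may safely be hidden inside $\tilde O$.

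Second, I would rewrite the constant $c_3$ in~\eqref{a:defc3} as an affine function of $(\omega,\varsigma)$. Indeed, \eqref{a:defc3} is already displayed as a product of the solver-independent prefactor $(1+\log_2(\rho/\eta)+c_2)/\ln(1/\rho)$ with the sum $\frac{\varsigma c_1^\ell}{\rho^\ell \ell \ln(1/\rho)} + \frac{\omega c_1^\ell}{\beta_0^\ell(1-\rho^\ell)}$, so that $c_3 = A\varsigma + B\omega$ with $A,B$ depending only on $c_1,c_2,\rho,\eta,\beta_0,\ell$. Consequently $c_3 \le \max(A,B)(\omega+\varsigma)$, and $A,B$ are exactly the kind of constants that the $\tilde O$ notation is declared to absorb. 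Third, I would dispose of the two trailing factors in~\eqref{a:wrrrrr}: the additive term $m_0$ is a fixed algorithm parameter, independent of $\epsilon$, and the logarithmic factor $\ln(c_1/(\epsilon\rho)) = \ln c_1 - \ln\rho + \ln(1/\epsilon)$ is $O(\ln(1/\epsilon))$, precisely a logarithm term. Substituting these into~\eqref{a:wrrrrr} gives $\sum_{t=0}^s \bE[m_t] \le m_0 + \max(A,B)(\omega+\varsigma)\,\epsilon^{-\ell}\ln(c_1/(\epsilon\rho))$, which, upon hiding $m_0$, $A$, $B$ and the logarithm inside $\tilde O$, is exactly $\tilde O((\omega+\varsigma)/\epsilon^\ell)$.

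There is no genuine analytic obstacle here, since the corollary is a bookkeeping restatement of Theorem~\ref{main:theoemalgo2}; the only point that genuinely requires care — and the one I would state explicitly — is the claim that $c_1$ and $c_2$ carry no hidden dependence on $\cA$. This must be checked against their definitions, because if, say, the initialization tolerance $\epsilon_0 \ge H_0(x^0)-H_0^\star$ (produced by running $\cA$ for $m_0$ steps in Algorithm~\ref{ipALM_m}) were allowed to depend on the solver in a way that scales with $\epsilon$, the clean separation would break down. Since $\epsilon_0$ is a fixed, $\epsilon$-independent starting accuracy, this does not occur, the affine-in-$(\omega,\varsigma)$ structure of $c_3$ is preserved, and the stated bound follows.
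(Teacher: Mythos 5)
Your proposal is correct and follows exactly the route the paper intends: the paper gives no separate proof of Corollary~\ref{coro:main}, treating it as an immediate bookkeeping consequence of the bound~\eqref{a:wrrrrr}, the linear-in-$(\omega,\varsigma)$ structure of $c_3$ in~\eqref{a:defc3}, and the stated convention that $\tilde O$ absorbs logarithm factors and the solver-independent constants $c_1, c_2, \rho, \eta, \beta_0, \epsilon_0, m_0, \ell$. Your explicit check that $c_1$ and $c_2$ (via $c_0$) involve only problem data and algorithm parameters, not $\cA$, is a worthwhile clarification but does not change the argument.
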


	\subsection{Proof of Proposition~\ref{prop:main2}}\label{sec:H}
	In the following we denote
	\begin{align}\label{a:erdfgtff}
		L^\star(y,\lambda,\beta):=\min_x  L(x;y,\lambda,\beta),\enspace x^\star(y,\lambda, \beta):=\arg\min_x  L(x;y,\lambda,\beta),\enspace
		p^\star(y,\lambda,\beta):=p(x^\star(y,\lambda,\beta)).
	\end{align}
	We first state a few useful lemmas. Their proofs are mostly based on standard duality theory and can be found in Appendix~\ref{app:pse}.
	% First we give a straightforward extension of Lemma~\ref{l:Lxbound} using definition~\eqref{a:pALe}.
	\begin{lemma}\label{l:Lxbound2}
		For any $x\in \R^n$,  $\lambda,\lambda'\in \R^d$ and $\beta,\beta'\in \R_+$ we have,
		\begin{align}\notag
			&L(x;y,\lambda,\beta)-L(x;y',\lambda',\beta')+\frac{\beta}{2}\| \Lambda(p(x);\lambda,\beta)-\lambda\|^2-\frac{\beta'}{2}\| \Lambda(p(x);\lambda',\beta')-\lambda'\|^2
			\\&\leq \< \Lambda(p(x);\lambda,\beta)-\Lambda(p(x);\lambda',\beta'), \beta'( \Lambda(p(x);\lambda',\beta')-\lambda') >+\frac{\beta}{2}\|x-y\|^2-\frac{\beta'}{2}\|x-y'\|^2,
		\end{align}
		and
		\begin{align}\notag
			&L(x;y,\lambda,\beta)-L(x;y',\lambda',\beta')+\frac{\beta}{2}\| \Lambda(p(x);\lambda,\beta)-\lambda\|^2-\frac{\beta'}{2}\| \Lambda(p(x);\lambda',\beta')-\lambda'\|^2
			\\&\geq \< \Lambda(p(x);\lambda,\beta)-\Lambda(p(x);\lambda',\beta'), \beta( \Lambda(p(x);\lambda,\beta)-\lambda) >+\frac{\beta}{2}\|x-y\|^2-\frac{\beta'}{2}\|x-y'\|^2
		\end{align}
	\end{lemma}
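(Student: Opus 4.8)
The plan is to reduce both inequalities to a single application of the subgradient inequality for the conjugate $h^*$, evaluated at the two relevant maximizers. Write $v:=\Lambda(p(x);\lambda,\beta)$ and $v':=\Lambda(p(x);\lambda',\beta')$ for the maximizers appearing in~\eqref{a:Lambda}. Since $f(x)+g(x)$ is common to $L(x;y,\lambda,\beta)$ and $L(x;y',\lambda',\beta')$, it cancels in the difference, so by definition~\eqref{a:pAL}
\begin{align*}
L(x;y,\lambda,\beta)-L(x;y',\lambda',\beta')=h(p(x);\lambda,\beta)-h(p(x);\lambda',\beta')+\frac{\beta}{2}\|x-y\|^2-\frac{\beta'}{2}\|x-y'\|^2.
\end{align*}
The proximal terms $\frac{\beta}{2}\|x-y\|^2-\frac{\beta'}{2}\|x-y'\|^2$ appear identically on the right-hand side of both claimed inequalities, so they drop out, and the whole matter reduces to bounding
\begin{align*}
\Delta:=h(p(x);\lambda,\beta)-h(p(x);\lambda',\beta')+\frac{\beta}{2}\|v-\lambda\|^2-\frac{\beta'}{2}\|v'-\lambda'\|^2.
\end{align*}

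The key simplification I would carry out next uses the defining maximization~\eqref{a:hbetadef}: evaluating its objective at the optimal $v$ gives the identity $h(p(x);\lambda,\beta)=\<v,p(x)>-h^*(v)-\frac{\beta}{2}\|v-\lambda\|^2$, and likewise for the primed data. Substituting both into $\Delta$ cancels the quadratic penalties exactly, collapsing it to the clean form
\begin{align*}
\Delta=\<v-v',p(x)>-h^*(v)+h^*(v').
\end{align*}
Thus, after reinstating the cancelled proximal terms, the lemma is equivalent to sandwiching $\<v-v',p(x)>-h^*(v)+h^*(v')$ between $\<v-v',\beta(v-\lambda)>$ from below and $\<v-v',\beta'(v'-\lambda')>$ from above.

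For the upper bound I would invoke the optimality condition~\eqref{a:optimalitycondition} of Lemma~\ref{l:erdfgt} at the data $(p(x),\lambda',\beta')$, which reads $p(x)-\beta'(v'-\lambda')\in\partial h^*(v')$. The subgradient inequality for the convex function $h^*$ then gives $h^*(v)\geq h^*(v')+\<p(x)-\beta'(v'-\lambda'),v-v'>$, and rearranging yields $\Delta\leq\<v-v',\beta'(v'-\lambda')>$. Symmetrically, applying~\eqref{a:optimalitycondition} at $(p(x),\lambda,\beta)$ gives $p(x)-\beta(v-\lambda)\in\partial h^*(v)$, so the subgradient inequality at $v$ gives $h^*(v')\geq h^*(v)+\<p(x)-\beta(v-\lambda),v'-v>$, which rearranges to $\Delta\geq\<v-v',\beta(v-\lambda)>$.

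There is no serious obstacle here: the content is entirely in the bookkeeping identity $h(p(x);\lambda,\beta)+\frac{\beta}{2}\|v-\lambda\|^2=\<v,p(x)>-h^*(v)$, which is exactly what makes the awkward quadratic terms $\frac{\beta}{2}\|v-\lambda\|^2$ and $\frac{\beta'}{2}\|v'-\lambda'\|^2$ in the statement disappear and exposes $\Delta$ as a difference of conjugate values. Once this is seen, the two bounds are just the two directions of the subgradient inequality between $v$ and $v'$, each written via the optimality characterization of the maximizer at which the subgradient is taken. The only care needed is to keep straight which of $(\lambda,\beta)$ or $(\lambda',\beta')$ supplies the subgradient in each case.
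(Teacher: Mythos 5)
Your proposal is correct and follows essentially the same route as the paper's own proof: both first use the identity $h(p(x);\lambda,\beta)+\frac{\beta}{2}\|\Lambda(p(x);\lambda,\beta)-\lambda\|^2=\<\Lambda(p(x);\lambda,\beta),p(x)>-h^*(\Lambda(p(x);\lambda,\beta))$ to collapse the left-hand side to $\<v-v',p(x)>-h^*(v)+h^*(v')$ plus the proximal terms, and then apply the optimality condition~\eqref{a:optimalitycondition} at $(p(x),\lambda,\beta)$ and at $(p(x),\lambda',\beta')$ to get the two subgradient inequalities for $h^*$ that yield the lower and upper bounds respectively. No gaps; the bookkeeping and the direction of each subgradient inequality are handled exactly as in the paper.
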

	\begin{lemma}\label{l:sL2} For any $x\in \R^n$ we have,
		\begin{align}\label{a:sL2}
			L(x;y,\lambda,\beta)-L^\star(y,\lambda,\beta)\geq  \frac{\beta}{2}\|x-x^\star(y,\lambda,\beta)\|^2+\frac{\beta}{2} \| \Lambda(p(x);\lambda,\beta)-
			\Lambda(p^\star(y,\lambda,\beta);\lambda,\beta)\|^2.
		\end{align}
	\end{lemma}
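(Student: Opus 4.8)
My plan is to write $L(x;y,\lambda,\beta)=\big(f(x)+g(x)+h(p(x);\lambda,\beta)\big)+\tfrac{\beta}{2}\|x-y\|^2$ and to produce the two quadratic terms on the right-hand side from two distinct sources: the term $\tfrac{\beta}{2}\|x-x^\star\|^2$ will come from the proximal quadratic, while $\tfrac{\beta}{2}\|\Lambda(p(x);\lambda,\beta)-\Lambda(p^\star;\lambda,\beta)\|^2$ will come from the curvature hidden inside the smoothed function $h(\cdot;\lambda,\beta)$. Throughout write $x^\star=x^\star(y,\lambda,\beta)$, $p^\star=p^\star(y,\lambda,\beta)$ and $\Lambda^\star=\Lambda(p^\star;\lambda,\beta)$, and set $\Phi(x):=h(p(x);\lambda,\beta)$, which by Lemma~\ref{l:rterrr} is convex and differentiable with $\nabla\Phi(x^\star)=\nabla p(x^\star)\Lambda^\star$.

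The crucial step is a sharpened lower bound for $\Phi$. By definition~\eqref{a:hbetadef}, $\Phi(x)=\max_v G(v,p(x))$ where $G(v,u):=\langle v,u\rangle-h^\star(v)-\tfrac{\beta}{2}\|v-\lambda\|^2$, and the maximiser at $u=p(x)$ is $\Lambda(p(x);\lambda,\beta)$. Since $v\mapsto G(v,u)$ is $\beta$-strongly concave, evaluating it at the suboptimal point $v=\Lambda^\star$ gives $G(\Lambda^\star,p(x))\le \Phi(x)-\tfrac{\beta}{2}\|\Lambda(p(x);\lambda,\beta)-\Lambda^\star\|^2$. Because $\Lambda^\star$ is itself the maximiser at $u=p^\star$, the left-hand side equals $\Phi(x^\star)+\langle\Lambda^\star,p(x)-p(x^\star)\rangle$, and rearranging yields
\[
\Phi(x)-\Phi(x^\star)\ge \langle\Lambda^\star,\,p(x)-p(x^\star)\rangle+\tfrac{\beta}{2}\|\Lambda(p(x);\lambda,\beta)-\Lambda^\star\|^2 .
\]

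Next I would assemble $L(x;y,\lambda,\beta)-L^\star(y,\lambda,\beta)$ from the pieces. Using convexity of $f$ and of $g$ (with a subgradient $\gamma\in\partial g(x^\star)$), the inequality just derived for $\Phi$, and the exact identity $\tfrac{\beta}{2}\|x-y\|^2-\tfrac{\beta}{2}\|x^\star-y\|^2=\langle\beta(x^\star-y),x-x^\star\rangle+\tfrac{\beta}{2}\|x-x^\star\|^2$, all first-order contributions collect into $\langle\nabla f(x^\star)+\gamma+\nabla p(x^\star)\Lambda^\star+\beta(x^\star-y),\,x-x^\star\rangle$ together with the leftover remainder $\langle\Lambda^\star,\,p(x)-p(x^\star)-\nabla p(x^\star)^\top(x-x^\star)\rangle$, plus the two desired quadratics. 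The first bracket vanishes by the first-order optimality condition $0\in\nabla f(x^\star)+\partial g(x^\star)+\nabla p(x^\star)\Lambda^\star+\beta(x^\star-y)$ satisfied by the minimiser $x^\star$. Hence everything reduces to showing that the remainder $\langle\Lambda^\star,\,p(x)-p(x^\star)-\nabla p(x^\star)^\top(x-x^\star)\rangle$ is nonnegative.

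This last inequality is the main obstacle, and it is exactly where the structural hypotheses on $p$ are needed. If $p_1,p_2$ are affine the remainder is identically zero and we are done. In the general case I would argue componentwise: cone-convexity of $p_i$ (conditions~\eqref{a:pass} and~\eqref{a:pass2}), applied along the segment from $x^\star$ to $x$ and passed to the limit using closedness of $\cC_i$, gives $p_i(x)-p_i(x^\star)-\nabla p_i(x^\star)^\top(x-x^\star)\in\cC_i$; on the dual side, $\Lambda^\star\in\dom(h^\star)$ splits as $\Lambda_i^\star\in\dom(h_i^\star)$, and the order-preserving property~\eqref{a:hass} (for $i=1$, combined with the Lipschitz bound on $h_1$) and the downward-closedness of $\cK$ (for $i=2$) force $\dom(h_i^\star)\subseteq\cC_i^\star$. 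Pairing a cone element with a dual-cone element is nonnegative, so the remainder is nonnegative and the claimed bound~\eqref{a:sL2} follows. I expect the verification that $\dom(h^\star)$ lies in the dual cone, and the limiting argument turning cone-convexity into the cone-gradient inclusion, to be the only genuinely delicate points; the collection of first-order terms in between is routine bookkeeping.
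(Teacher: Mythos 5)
Your proof takes a genuinely different route from the paper's. The paper argues in the perturbation variable $w$: it introduces $L(x,w;y,\lambda,\beta)=f(x)+g(x)+h(p(x)-w)+\frac{1}{2\beta}\|w\|^2+\langle w,\lambda\rangle+\frac{\beta}{2}\|x-y\|^2-\frac{\beta}{2}\|x-x^\star\|^2$, shows that the value function $H(w)=\min_x L(x,w;y,\lambda,\beta)$ is $1/\beta$-strongly convex in $w$ (this is where condition~\eqref{a:herffg} enters, through Lemma~\ref{l:infimalconv2}), identifies its minimizer as $\beta(\Lambda(p^\star;\lambda,\beta)-\lambda)$, and reads off~\eqref{a:sL2} from strong convexity of $H$. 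You instead stay in the primal variable: your strengthened bound for $\Phi(x)=h(p(x);\lambda,\beta)$, obtained from the $\beta$-strong concavity of $v\mapsto\langle v,u\rangle-h^*(v)-\frac{\beta}{2}\|v-\lambda\|^2$ evaluated at the suboptimal point $v=\Lambda^\star$, is correct, and so is the assembly using convexity of $f$ and $g$, the quadratic identity, and the first-order optimality condition at $x^\star$ (with the particular subgradient $\gamma\in\partial g(x^\star)$ that this condition furnishes). Everything then correctly reduces to the nonnegativity of the remainder $\langle\Lambda^\star,\,p(x)-p(x^\star)-\nabla p(x^\star)^\top(x-x^\star)\rangle$.

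Your handling of that last step, however, has a genuine gap in generality. The lemma is asserted under Assumption~\ref{ass:handp}, whose items~\ref{ass:pq2} and~\ref{ass:pq3} are the abstract joint-convexity conditions; cone-convexity of $p_i$ (\eqref{a:pass},~\eqref{a:pass2}), order-preservation~\eqref{a:hass}, and downward-closedness of $\cK$ are offered in the paper only as \emph{sufficient} conditions for those items, not as standing hypotheses. As written, your argument therefore proves the lemma only in the affine and cone-structured special cases, while the paper's proof (and the later use of the lemma) requires it under~\eqref{a:herffg} alone. The gap is closable without extra hypotheses. Fix $x,y\in\R^n$, $\alpha\in(0,1)$ and set $\delta:=p(\alpha x+(1-\alpha)y)-\alpha p(x)-(1-\alpha)p(y)$; choosing $u=p(x)-z$ and $v=p(y)-z$ in~\eqref{a:herffg} gives
\begin{align*}
h(z+\delta)\leq h(z),\enspace \forall z\in \R^d .
\end{align*}
By~\eqref{a:optimalitycondition} and conjugacy, $\Lambda^\star\in\partial h\bigl(p^\star-\beta(\Lambda^\star-\lambda)\bigr)$, so the subgradient inequality at that point yields $\langle\Lambda^\star,\delta\rangle\leq h\bigl(p^\star-\beta(\Lambda^\star-\lambda)+\delta\bigr)-h\bigl(p^\star-\beta(\Lambda^\star-\lambda)\bigr)\leq 0$. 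Since this holds for the concavity gap $\delta$ formed from \emph{any} two points, the function $t\mapsto\langle\Lambda^\star,\,p(x^\star+t(x-x^\star))\rangle$ is convex and differentiable on $[0,1]$, and comparing its value at $t=1$ with its first-order expansion at $t=0$ is exactly the remainder inequality you need. With this substitution for your cone/dual-cone argument, your proof is complete under the paper's actual assumptions.
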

	\begin{lemma}\label{l::boundbetabetap}
		Let any $u, \lambda, \lambda'\in \R^d$ and $\beta,\beta'\in \R_+$.
		Condition~\ref{ass:hp1} in Assumption~\ref{ass:handp} ensures:
		\begin{align}\label{a:boundbetabetap}\|\beta(\Lambda(u;\lambda, \beta)-\lambda)-\beta'(\Lambda(u;\lambda', \beta')-\lambda')\|\leq \sqrt{ ((\beta+\beta')L_{h_1}	 + \|\beta \lambda_1-\beta'\lambda_1'\|)^2+\|\beta \lambda_2-\beta'\lambda_2'\|^2}.\end{align}
	\end{lemma}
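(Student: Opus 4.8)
The plan is to identify the vector $\beta(\Lambda(u;\lambda,\beta)-\lambda)$ with the minimizer $w^{\star}$ of the infimal-convolution problem in~\eqref{a:dualityhbeta}, via the identity~\eqref{a:wstar} from Lemma~\ref{l:erdfgt}. Setting $w:=\beta(\Lambda(u;\lambda,\beta)-\lambda)$ and $w':=\beta'(\Lambda(u;\lambda',\beta')-\lambda')$, the left-hand side of~\eqref{a:boundbetabetap} is precisely $\|w-w'\|$. Since $h=h_1\oplus h_2$ is separable and the quadratic and linear penalties in~\eqref{a:dualityhbeta} split blockwise, the minimization defining $w^{\star}$ decouples into independent problems over the $\R^{d_1}$ and $\R^{d_2}$ blocks; hence $w=(w_1;w_2)$, $w'=(w_1';w_2')$, and each block is the minimizer of its own infimal convolution. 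Because the target right-hand side is exactly $\sqrt{\|w_1-w_1'\|^2+\|w_2-w_2'\|^2}$ with the two terms under the root matching the two blocks, I would bound each block separately and recombine at the end through $\|w-w'\|^2=\|w_1-w_1'\|^2+\|w_2-w_2'\|^2$.

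For the $h_1$ block I would use that $\Lambda_1(u;\lambda,\beta)$, being the argmax in~\eqref{a:hbetadef} restricted to the first block, must lie in $\dom(h_1^{*})$; since $h_1$ is $L_{h_1}$-Lipschitz by Assumption~\ref{ass:handp}\ref{ass:hp1}, $\dom(h_1^{*})$ is contained in the Euclidean ball of radius $L_{h_1}$, so $\|\Lambda_1(u;\lambda,\beta)\|\le L_{h_1}$ and likewise for the primed quantity. Writing $w_1-w_1'=(\beta\Lambda_1(u;\lambda,\beta)-\beta'\Lambda_1(u;\lambda',\beta'))-(\beta\lambda_1-\beta'\lambda_1')$ and applying the triangle inequality together with $\|\beta\Lambda_1(u;\lambda,\beta)-\beta'\Lambda_1(u;\lambda',\beta')\|\le(\beta+\beta')L_{h_1}$ gives $\|w_1-w_1'\|\le(\beta+\beta')L_{h_1}+\|\beta\lambda_1-\beta'\lambda_1'\|$, which is the first term under the root.

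The cleaner and more instructive step is the $h_2$ block, where no Lipschitz hypothesis is available. Here $h_2$ is the indicator of $\cK$ by~\eqref{eq:h2}, so the block minimization in~\eqref{a:dualityhbeta} reads $\min_{w_2:\,u_2-w_2\in\cK}\{\tfrac{1}{2\beta}\|w_2\|^2+\<w_2,\lambda_2>\}$. Substituting $z:=u_2-w_2$ and completing the square recasts this as $\min_{z\in\cK}\|u_2+\beta\lambda_2-z\|^2$, i.e. a Euclidean projection, so $w_2=u_2-\Pi_{\cK}(u_2+\beta\lambda_2)$ and analogously $w_2'=u_2-\Pi_{\cK}(u_2+\beta'\lambda_2')$. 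The common $u_2$ cancels in the difference, leaving $w_2-w_2'=\Pi_{\cK}(u_2+\beta'\lambda_2')-\Pi_{\cK}(u_2+\beta\lambda_2)$, and nonexpansiveness of the projection yields $\|w_2-w_2'\|\le\|\beta\lambda_2-\beta'\lambda_2'\|$, the second term under the root.

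Combining the two block estimates into $\|w-w'\|^2$ and taking square roots delivers~\eqref{a:boundbetabetap}. I expect the only genuine subtlety to be the reduction of the $h_2$ block to a projection, namely the substitution $z=u_2-w_2$ and the completion of the square that absorbs the linear term $\<w_2,\lambda_2>$ into the projected point $u_2+\beta\lambda_2$; everything else reduces to the standard Lipschitz/bounded-domain duality for $h_1$ and the nonexpansiveness of $\Pi_{\cK}$. A possible alternative for the $h_2$ block is a direct monotonicity argument based on the optimality condition~\eqref{a:optimalitycondition}, but the projection identity is more transparent and avoids having to handle cross terms.
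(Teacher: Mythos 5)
Your proof is correct, and it shares the paper's skeleton: decouple the infimal convolution~\eqref{a:dualityhbeta} into its $\R^{d_1}$ and $\R^{d_2}$ blocks, bound the first block by $\|\Lambda_1(u_1;\lambda_1,\beta)\|\le L_{h_1}$ plus the triangle inequality, and recombine through the Pythagorean identity. The genuine difference is in the $h_2$ block, which is the crux of the lemma. The paper stays at the level of subdifferential calculus: starting from the optimality condition $\Lambda_2(u_2;\lambda_2,\beta)\in \partial h_2\bigl(u_2-\beta(\Lambda_2(u_2;\lambda_2,\beta)-\lambda_2)\bigr)$ (its display~\eqref{a:optimalityconditionblock}), it uses that $\partial h_2$ is a cone (the normal cone of $\cK$) to rescale the subgradient by $\beta$, then applies monotonicity of $\partial h_2$ at the two parameter pairs to obtain
\begin{equation*}
\bigl\langle \beta\Lambda_2(u_2;\lambda_2,\beta)-\beta'\Lambda_2(u_2;\lambda_2',\beta'),\; \beta(\Lambda_2(u_2;\lambda_2,\beta)-\lambda_2)-\beta'(\Lambda_2(u_2;\lambda_2',\beta')-\lambda_2')\bigr\rangle\le 0,
\end{equation*}
and concludes by expanding the square. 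You instead compute the block-2 minimizer in closed form, $\beta(\Lambda_2(u_2;\lambda_2,\beta)-\lambda_2)=u_2-\Pi_{\cK}(u_2+\beta\lambda_2)$, by completing the square, and then invoke nonexpansiveness of $\Pi_{\cK}$. The two mechanisms are equivalent in depth---nonexpansiveness of a Euclidean projection is itself proved by exactly that monotonicity of the normal cone---but yours is more geometric, makes transparent why no Lipschitz-type constant appears in the second block, and reuses an identity the paper itself derives later (display~\eqref{a:dualityhbeta34} in the appendix proof of~\eqref{a:ewssss}); the paper's version has the small advantage of never needing the explicit projection formula, working directly from~\eqref{a:optimalitycondition}. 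Your block-1 justification (the argmax lies in $\dom(h_1^*)$, which the Lipschitz property confines to the $L_{h_1}$-ball) also differs cosmetically from the paper's (which bounds $\Lambda_1$ as a subgradient of the $L_{h_1}$-Lipschitz function $h_1$), but both express the same standard fact, so no gap arises there either.
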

	\begin{remark}\label{rem:hc}
		If $$
		h(u)=\left\{\begin{array}{ll}0 & \mathrm{~if~} u=b\\
		+\infty & \mathrm{~otherwise~}
		\end{array}\right.
		$$
		for some constant vector $b\in \R^d$, then
		by~\eqref{a:optimalitycondition} we have
		$$
		u-\beta(\Lambda(u;\lambda, \beta)-\lambda)=b,
		$$
		for any $u,\lambda\in\R^d$ and $\beta\geq 0$. In this special case  a refinement of Lemma~\ref{l::boundbetabetap} can be stated as follows:
		$$\|\beta(\Lambda(u;\lambda, \beta)-\lambda)-\beta'(\Lambda(u;\lambda', \beta')-\lambda')\|=0.$$
	\end{remark}
	
	\begin{lemma}\label{l:erfgrte}
		Let any $0<\beta/2<\beta'$ and any $w,w',y,y'\in \R^n$. We have
		\begin{align}\label{a:dger}
			-\frac{\beta}{2}\| w'-w\|^2+\frac{\beta}{2}\| w'-y\|^2-\frac{\beta'}{2}\|w'-y'\|^2\leq \frac{\beta}{2}\|w-y'\|^2+ \frac{\beta(2\beta'+\beta)}{2(2\beta'-\beta)}\|y-y'\|^2.
		\end{align}
	\end{lemma}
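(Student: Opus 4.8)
The statement is a purely algebraic quadratic inequality in $\R^n$, and the plan is to treat the left-hand side as a function of the single vector $w'$. Expanding the three squared norms, the coefficient of $\|w'\|^2$ on the left is $-\frac{\beta}{2}+\frac{\beta}{2}-\frac{\beta'}{2}=-\frac{\beta'}{2}<0$, so the left-hand side is a concave quadratic in $w'$ while the right-hand side does not involve $w'$ at all. It therefore suffices to bound the maximum over $w'$; equivalently, I would complete the square in $w'$. A direct computation gives the maximizer $w'_\star=y'+\frac{\beta}{\beta'}(w-y)$, and plugging it back yields a closed-form expression for $\max_{w'}(\textrm{LHS})$ in terms of $w,y,y'$.

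The next step is a change of variables that removes the spurious dependence on $y'$. Setting $a:=w-y'$ and $b:=y-y'$ (so that $w-y=a-b$), all terms linear in $y'$ cancel, and the maximum value collapses to $\frac{\beta^2}{2\beta'}\|a-b\|^2+\frac{\beta}{2}\|b\|^2-\frac{\beta}{2}\|a\|^2$. In the same variables the right-hand side reads $\frac{\beta}{2}\|a\|^2+\frac{\beta(2\beta'+\beta)}{2(2\beta'-\beta)}\|b\|^2$, so the claim reduces to showing the nonnegativity of a single quadratic form $Q(a,b)$ in the two vectors $a,b$.

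Carrying out the subtraction and factoring out $\beta>0$, the form $Q$ becomes $\alpha\|a\|^2+2\gamma\langle a,b\rangle+\delta\|b\|^2$ with $\alpha=\frac{2\beta'-\beta}{2\beta'}$, $\gamma=\frac{\beta}{2\beta'}$, and $\delta=\frac{\beta^2}{2\beta'(2\beta'-\beta)}$. Here the hypothesis $\beta/2<\beta'$ is exactly what guarantees $\alpha>0$, $\delta>0$, and that the denominator $2\beta'-\beta$ is positive. I then expect the crucial observation to be that the discriminant vanishes: $\alpha\delta=\frac{\beta^2}{4\beta'^2}=\gamma^2$, so that $Q(a,b)=\|\sqrt{\alpha}\,a+\sqrt{\delta}\,b\|^2\ge 0$. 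This not only closes the proof but also explains why the constant $\frac{\beta(2\beta'+\beta)}{2(2\beta'-\beta)}$ is precisely the sharp one, the inequality being attained when $\sqrt{\alpha}\,a=-\sqrt{\delta}\,b$.

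The computations are all elementary, so I do not anticipate a genuine obstacle; the only real choices are the reduction over $w'$ by concavity and the substitution $a=w-y'$, $b=y-y'$, which is what makes $y'$ disappear and exposes the perfect-square structure. The main place to be careful is the sign bookkeeping when maximizing over $w'$ and verifying that the cross-term coefficient matches $\sqrt{\alpha\delta}$ \emph{exactly} rather than merely being dominated by it.
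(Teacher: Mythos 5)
Your proof is correct, and it takes a genuinely different route from the paper's. All your key computations check out: the left-hand side is concave in $w'$ (the $\|w'\|^2$ coefficient is $-\beta'/2$), the maximizer is $w'_\star=y'+\frac{\beta}{\beta'}(w-y)$, the maximum value in the variables $a=w-y'$, $b=y-y'$ is $\frac{\beta^2}{2\beta'}\|a-b\|^2+\frac{\beta}{2}\|b\|^2-\frac{\beta}{2}\|a\|^2$, and the residual quadratic form after subtracting from the right-hand side has coefficients $\alpha=\frac{2\beta'-\beta}{2\beta'}$, $\gamma=\frac{\beta}{2\beta'}$, $\delta=\frac{\beta^2}{2\beta'(2\beta'-\beta)}$, which satisfy $\alpha\delta=\gamma^2$ exactly, giving the perfect square $\|\sqrt{\alpha}\,a+\sqrt{\delta}\,b\|^2\ge 0$. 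The paper, by contrast, never maximizes over $w'$: it invokes the elementary inequality $\|u+v\|^2\le(1+a)\|u\|^2+(1+1/a)\|v\|^2$ twice, once with $a=1$ to obtain $-\frac{\beta}{2}\|w'-w\|^2\le\frac{\beta}{2}\|w-y'\|^2-\frac{\beta}{4}\|w'-y'\|^2$, and once with $a=\frac{2\beta'-\beta}{2\beta}$ (this is where $\beta'>\beta/2$ enters) to obtain $\frac{\beta}{2}\|w'-y\|^2\le\frac{\beta'+\beta/2}{2}\|w'-y'\|^2+\frac{\beta(2\beta'+\beta)}{2(2\beta'-\beta)}\|y-y'\|^2$; adding the two bounds makes the $\|w'-y'\|^2$ terms cancel exactly against $-\frac{\beta'}{2}\|w'-y'\|^2$. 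The paper's argument is shorter and requires no completion of squares, but the parameter choices look unmotivated; your argument is mechanical (nothing to guess) and yields a bonus the paper's proof does not: the constant $\frac{\beta(2\beta'+\beta)}{2(2\beta'-\beta)}$ is sharp, with equality attained at $w'=w'_\star$ and $\sqrt{\alpha}\,a=-\sqrt{\delta}\,b$.
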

	%Then Lemma~\ref{l:sL} directly yields the following estimation.		
	%We give an analogue of Proposition~\ref{prop:m}.        
	Using the above four lemmas, we establish a relation between  $L(x; y',\lambda', \beta') -L^\star(y',\lambda',\beta')$ and $L(x; y, \lambda, \beta)-L^\star(y,\lambda,\beta)$.
	\begin{proposition}\label{prop:m2}
		For any $x,y,y'\in \R^n$,  $\lambda,\lambda'\in \R^d$ and $0<\beta/2<\beta'$, we have
		\begin{equation}\label{eq:main2}\begin{array}{ll}
		&L(x; y',\lambda', \beta') -L^\star(y',\lambda',\beta')\\
		&\leq L(x; y, \lambda, \beta)-L^\star(y,\lambda,\beta)+ \|\lambda-\lambda'\|\sqrt{ ((\beta+\beta')L_{h_1}	 + \|\beta \lambda_1-\beta'\lambda_1'\|)^2+\|\beta \lambda_2-\beta'\lambda_2'\|^2}\\&\quad+{\beta}\|\lambda-\lambda'\|^2+
		\frac{\beta-\beta'}{2}\| \Lambda(p(x); \lambda',\beta')-\lambda'\|^2+
		\frac{\beta'-\beta}{2}\| \Lambda(p^\star(y',\lambda',\beta'); \lambda,\beta)-\lambda\|^2\\&\quad+\frac{ \beta}{2} \|\Lambda(p^\star(y,\lambda,\beta);\lambda,\beta)-\Lambda(p(x);\lambda,\beta)\|^2+\frac{\beta}{2}\|x^\star(y,\lambda,\beta)-y'\|^2+ \frac{\beta(2\beta'+\beta)}{2(2\beta'-\beta)}\|y-y'\|^2\\&\quad-\frac{\beta}{2}\|x-y\|^2+\frac{\beta'}{2}\|x-y'\|^2 . \end{array}
		\end{equation}
	\end{proposition}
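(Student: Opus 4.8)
The plan is to insert the intermediate value $L(x;y,\lambda,\beta)$ together with the two optimal values and split the gap as
\begin{align*}
L(x;y',\lambda',\beta')-L^\star(y',\lambda',\beta')
&=\underbrace{\big[L(x;y',\lambda',\beta')-L(x;y,\lambda,\beta)\big]}_{(\mathrm I)}
+\underbrace{\big[L(x;y,\lambda,\beta)-L^\star(y,\lambda,\beta)\big]}_{(\mathrm{II})}\\
&\quad+\underbrace{\big[L^\star(y,\lambda,\beta)-L^\star(y',\lambda',\beta')\big]}_{(\mathrm{III})}.
\end{align*}
Term $(\mathrm{II})$ is exactly the leading term kept on the right-hand side of the claim, so all the work is to bound $(\mathrm I)$ and $(\mathrm{III})$ by the remaining listed terms. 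Throughout I write $x^\star:=x^\star(y,\lambda,\beta)$ and $\bar x:=x^\star(y',\lambda',\beta')$.

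For $(\mathrm I)$ I would take the second (lower-bound) inequality of Lemma~\ref{l:Lxbound2} at the point $x$ and rearrange it into an upper bound on $L(x;y',\lambda',\beta')-L(x;y,\lambda,\beta)$. This directly yields the proximal discrepancy $-\tfrac{\beta}{2}\|x-y\|^2+\tfrac{\beta'}{2}\|x-y'\|^2$ (the last line of the claim), together with the squared terms $\tfrac{\beta}{2}\|\Lambda(p(x);\lambda,\beta)-\lambda\|^2$, $-\tfrac{\beta'}{2}\|\Lambda(p(x);\lambda',\beta')-\lambda'\|^2$ and an inner product in the two smoothed dual variables. Completing the square (polarization) on the inner product against $\tfrac{\beta}{2}\|\Lambda(p(x);\lambda,\beta)-\lambda\|^2$ produces a nonpositive term that I discard, the term $\tfrac{\beta-\beta'}{2}\|\Lambda(p(x);\lambda',\beta')-\lambda'\|^2$, and a residual that is linear in $\lambda-\lambda'$ plus a multiple of $\|\lambda-\lambda'\|^2$, both deferred to the final collection step.

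For $(\mathrm{III})$ the key is that $L^\star(y',\lambda',\beta')=L(\bar x;y',\lambda',\beta')$ holds with equality, whereas Lemma~\ref{l:sL2} applied at $\bar x$ lower-bounds $L(\bar x;y,\lambda,\beta)-L^\star(y,\lambda,\beta)$ and hence upper-bounds $L^\star(y,\lambda,\beta)$. Substituting gives $(\mathrm{III})\le\big[L(\bar x;y,\lambda,\beta)-L(\bar x;y',\lambda',\beta')\big]-\tfrac{\beta}{2}\|\bar x-x^\star\|^2-(\text{a squared }\Lambda\text{-difference at }\bar x)$. The bracket is then bounded by the first (upper-bound) inequality of Lemma~\ref{l:Lxbound2} at $\bar x$, which in particular contributes the proximal terms $\tfrac{\beta}{2}\|\bar x-y\|^2-\tfrac{\beta'}{2}\|\bar x-y'\|^2$. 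Grouping these with the $-\tfrac{\beta}{2}\|\bar x-x^\star\|^2$ coming from Lemma~\ref{l:sL2} puts me exactly in the setting of Lemma~\ref{l:erfgrte} with $w=x^\star$, $w'=\bar x$, whose hypothesis $0<\beta/2<\beta'$ is precisely the standing assumption; this delivers the two terms $\tfrac{\beta}{2}\|x^\star(y,\lambda,\beta)-y'\|^2$ and $\tfrac{\beta(2\beta'+\beta)}{2(2\beta'-\beta)}\|y-y'\|^2$, and its application is the one place where the parameter condition is genuinely needed.

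Finally I would collect the two residual inner products (one from $(\mathrm I)$, one from the dual-variable terms of $(\mathrm{III})$) and bound them by Cauchy--Schwarz, invoking Lemma~\ref{l::boundbetabetap} to control the relevant norm $\|\beta(\Lambda(u;\lambda,\beta)-\lambda)-\beta'(\Lambda(u;\lambda',\beta')-\lambda')\|$ by $\sqrt{((\beta+\beta')L_{h_1}+\|\beta\lambda_1-\beta'\lambda_1'\|)^2+\|\beta\lambda_2-\beta'\lambda_2'\|^2}$; this yields the $\|\lambda-\lambda'\|\sqrt{\cdots}$ term and, after absorbing the leftover quadratics in $\lambda-\lambda'$, the term $\beta\|\lambda-\lambda'\|^2$. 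I expect the main obstacle to be purely the bookkeeping: one must track $\Lambda$ evaluated at the three distinct arguments $p(x)$, $p^\star(y,\lambda,\beta)$ and $p^\star(y',\lambda',\beta')$ and verify that every cross term either cancels, is nonpositive and can be dropped, or lands on one of the listed terms. In particular the mixed term $\tfrac{\beta}{2}\|\Lambda(p^\star(y,\lambda,\beta);\lambda,\beta)-\Lambda(p(x);\lambda,\beta)\|^2$ surfaces only in this consolidation, when the $\Lambda$-differences inherited from $(\mathrm{III})$ are re-expressed against $\Lambda(p(x);\lambda,\beta)$; no individual estimate is deep, but the careful accounting of these terms is where the proof's length lies.
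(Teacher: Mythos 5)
Your proposal is correct and takes essentially the same route as the paper: your three-way split is equivalent to the paper's decomposition into $\Delta_1,\Delta_2,\Delta_3,\Delta_4$ (your treatment of $(\mathrm{III})$, via $L^\star(y',\lambda',\beta')=L(x^\star(y',\lambda',\beta');y',\lambda',\beta')$ together with Lemma~\ref{l:sL2} at $x^\star(y',\lambda',\beta')$, recreates exactly $\Delta_3+\Delta_4$), and you invoke the same lemmas at the same points --- the lower-bound inequality of Lemma~\ref{l:Lxbound2} at $x$, its upper-bound inequality at $x^\star(y',\lambda',\beta')$, Lemma~\ref{l::boundbetabetap} with Cauchy--Schwarz, and Lemma~\ref{l:erfgrte} with $w=x^\star(y,\lambda,\beta)$, $w'=x^\star(y',\lambda',\beta')$. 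Your account of the final consolidation, including the origin of the mixed term $\tfrac{\beta}{2}\|\Lambda(p^\star(y,\lambda,\beta);\lambda,\beta)-\Lambda(p(x);\lambda,\beta)\|^2$ and the cancellation of the remaining squared $\Lambda$-difference against the negative term supplied by Lemma~\ref{l:sL2}, is precisely the bookkeeping the paper carries out.
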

	%   The proof of  Proposition~\ref{eq:main2} is highly similar to that of Proposition~\ref{prop:m} and deferred to Appendix~\ref{sec:H2}.

	\begin{proof}
		We first separate $L(x; y', \lambda', \beta') -L^\star(y',\lambda',\beta')$ into four parts:
		\begin{align*}
			&L(x; y',\lambda', \beta') -L^\star(y',\lambda',\beta')\\
			&=\underbrace{L(x; y,\lambda, \beta)-L^\star(y,\lambda,\beta)}_{\Delta_1}+\underbrace{L(x;y', \lambda', \beta')-L(x; y,\lambda, \beta)}_{\Delta_2}
			\\&+\underbrace{L(x^\star(y',\lambda',\beta');y, \lambda,\beta) -L^\star(y',\lambda',\beta')}_{\Delta_3}+\underbrace{L^\star(y,\lambda,\beta)-L(x^\star(y',\lambda',\beta'); y,\lambda,\beta)}_{\Delta_4}.
		\end{align*}
		By Lemma~\ref{l:Lxbound2}, 
		\begin{align*}
			\Delta_2\leq \enspace&
			\frac{\beta}{2}\| \Lambda(p(x);\lambda,\beta)-\lambda\|^2-\frac{\beta'}{2}\| \Lambda(p(x);\lambda',\beta')-\lambda'\|^2
			\\&+ \< \Lambda(p(x);\lambda',\beta')-\Lambda(p(x);\lambda,\beta), \beta( \Lambda(p(x);\lambda,\beta)-\lambda) >-\frac{\beta}{2}\|x-y\|^2+\frac{\beta'}{2}\|x-y'\|^2,
		\end{align*}
		and
		\begin{align*}
			\Delta_3\leq \enspace&
			\frac{\beta'}{2}\| \Lambda(p^\star(y',\lambda',\beta');\lambda',\beta')-\lambda'\|^2-\frac{\beta}{2}\| \Lambda(p^\star(y',\lambda',\beta');\lambda,\beta)-\lambda\|^2
			\\&+ \< \Lambda(p^\star(y',\lambda',\beta');\lambda,\beta)-\Lambda(p^\star(y',\lambda',\beta');\lambda',\beta'), \beta'( \Lambda(p^\star(y',\lambda',\beta');\lambda',\beta')-\lambda') >\\&+\frac{\beta}{2}\|x^\star(y',\lambda',\beta')-y\|^2-\frac{\beta'}{2}\|x^\star(y',\lambda',\beta')-y'\|^2.
		\end{align*}
		We then get
		\begin{align*}
			% \begin{equation*}\begin{array}{ll}
			\Delta_2+\Delta_3 
			%      &  \leq \frac{\beta}{2}\| \Lambda(p(x);\lambda,\beta)-\lambda\|^2-\frac{\beta'}{2}\| \Lambda(p(x);\lambda',\beta')-\lambda'\|^2
			%	\\&\quad+ \< \Lambda(p(x);\lambda',\beta') -\lambda'+\lambda-\Lambda(p(x);\lambda,\beta), \beta( \Lambda(p(x);\lambda,\beta)-\lambda) >\\
			%      &\quad+ \frac{\beta'}{2}\| \Lambda(p^\star(y',\lambda',\beta');\lambda',\beta')-\lambda'\|^2-\frac{\beta}{2}\| \Lambda(p^\star(y',\lambda',\beta');\lambda,\beta)-\lambda\|^2
			%\\
			%&\quad+ \< \Lambda(p^\star(y',\lambda',\beta');\lambda,\beta)-\lambda+\lambda'-\Lambda(p^\star(y',\lambda',\beta');\lambda',\beta'), \beta'( \Lambda(p^\star(y',\lambda',\beta');\lambda',\beta')-\lambda') >
			%\\&\quad+\<\lambda-\lambda',\beta'( \Lambda(p^\star(y',\lambda',\beta');\lambda',\beta')-\lambda') -\beta( \Lambda(p(x);\lambda,\beta)-\lambda)>\\
			%&\quad-\frac{\beta}{2}\|x-y\|^2+\frac{\beta'}{2}\|x-y'\|^2+\frac{\beta}{2}\|x^\star(y',\lambda',\beta')-y\|^2-\frac{\beta'}{2}\|x^\star(y',\lambda',\beta')-y'\|^2\\
			%&=
			&\leq-\frac{\beta}{2}\| \Lambda(p(x);\lambda,\beta)-\lambda\|^2-\frac{\beta'}{2}\| \Lambda(p(x);\lambda',\beta')-\lambda'\|^2
			+ \< \Lambda(p(x);\lambda',\beta') -\lambda', \beta( \Lambda(p(x);\lambda,\beta)-\lambda) >\\
			&\quad -\frac{\beta'}{2}\| \Lambda(p^\star(y',\lambda',\beta');\lambda',\beta')-\lambda'\|^2-\frac{\beta}{2}\| \Lambda(p^\star(y',\lambda',\beta');\lambda,\beta)-\lambda\|^2
			\\&\quad+ \< \Lambda(p^\star(y',\lambda',\beta');\lambda,\beta)-\lambda, \beta'( \Lambda(p^\star(y',\lambda',\beta');\lambda',\beta')-\lambda') >
			\\&\quad+\<\lambda-\lambda',\beta'( \Lambda(p^\star(y',\lambda',\beta');\lambda',\beta')-\lambda') -\beta( \Lambda(p(x);\lambda,\beta)-\lambda)>\\&\quad-\frac{\beta}{2}\|x-y\|^2+\frac{\beta'}{2}\|x-y'\|^2+\frac{\beta}{2}\|x^\star(y',\lambda',\beta')-y\|^2-\frac{\beta'}{2}\|x^\star(y',\lambda',\beta')-y'\|^2\\
			&\leq \frac{\beta-\beta'}{2}\|\Lambda(p(x);\lambda',\beta') -\lambda'\|^2+ \frac{\beta'-\beta}{2}\|  \Lambda(p^\star(y',\lambda',\beta');\lambda,\beta)-\lambda\|^2
			\\&\quad+\<\lambda-\lambda',\beta'( \Lambda(p^\star(y',\lambda',\beta');\lambda',\beta')-\lambda') -\beta( \Lambda(p(x);\lambda,\beta)-\lambda)>\\&\quad-\frac{\beta}{2}\|x-y\|^2+\frac{\beta'}{2}\|x-y'\|^2+\frac{\beta}{2}\|x^\star(y',\lambda',\beta')-y\|^2-\frac{\beta'}{2}\|x^\star(y',\lambda',\beta')-y'\|^2,
			% \end{array}   \end{equation*}
		\end{align*}
		where the last inequality simply relies on $2\<x,y>\leq \|x\|^2+\|y\|^2$.
		Further, according to Lemma~\ref{l:sL2},
		$$
		\Delta_4 \leq -\frac{\beta}{2} \| \Lambda(p^\star(y',\lambda',\beta');\lambda,\beta)-
		\Lambda(p^\star(y,\lambda,\beta);\lambda,\beta)\|^2-\frac{\beta}{2}\| x^\star(y',\lambda',\beta')- x^\star(y,\lambda,\beta)\|^2.
		$$
		Therefore,
		%  \begin{equation}\begin{array}{ll}
		\begin{align} \notag
			& \Delta_2+\Delta_3+\Delta_4- \frac{\beta-\beta'}{2}\|\Lambda(p(x);\lambda',\beta') -\lambda'\|^2- \frac{\beta'-\beta}{2}\|  \Lambda(p^\star(y',\lambda',\beta');\lambda,\beta)-\lambda\|^2 
			\\ \notag &\leq \<\lambda-\lambda',\beta'( \Lambda(p^\star(y',\lambda',\beta');\lambda',\beta')-\lambda') -\beta( \Lambda(p(x);\lambda,\beta)-\lambda)>\\\notag &\quad
			-\frac{\beta}{2} \| \Lambda(p^\star(y',\lambda',\beta');\lambda,\beta)-
			\Lambda(p^\star(y,\lambda,\beta);\lambda,\beta)\|^2-\frac{\beta}{2}\| x^\star(y',\lambda',\beta')- x^\star(y,\lambda,\beta)\|^2\\\notag &\quad-\frac{\beta}{2}\|x-y\|^2+\frac{\beta'}{2}\|x-y'\|^2+\frac{\beta}{2}\|x^\star(y',\lambda',\beta')-y\|^2-\frac{\beta'}{2}\|x^\star(y',\lambda',\beta')-y'\|^2\notag
			\\ \notag
			&=\<\lambda-\lambda',\beta'( \Lambda(p^\star(y',\lambda',\beta');\lambda',\beta')-\lambda') -\beta( \Lambda(p^\star(y',\lambda',\beta');\lambda,\beta)-\lambda)>\\ \notag &\quad+\beta\<\lambda-\lambda',\Lambda(p^\star(y,\lambda,\beta);\lambda,\beta)-\Lambda(p(x);\lambda,\beta)>+\beta\<\lambda-\lambda', \Lambda(p^\star(y',\lambda',\beta');\lambda,\beta)-\Lambda(p^\star(y,\lambda,\beta);\lambda,\beta)>\\\notag &\quad
			-\frac{\beta}{2} \| \Lambda(p^\star(y',\lambda',\beta');\lambda,\beta)-
			\Lambda(p^\star(y,\lambda,\beta);\lambda,\beta)\|^2-\frac{\beta}{2}\| x^\star(y',\lambda',\beta')- x^\star(y,\lambda,\beta)\|^2\\ \notag &\quad-\frac{\beta}{2}\|x-y\|^2+\frac{\beta'}{2}\|x-y'\|^2+\frac{\beta}{2}\|x^\star(y',\lambda',\beta')-y\|^2-\frac{\beta'}{2}\|x^\star(y',\lambda',\beta')-y'\|^2 \notag
			\\
			& \leq \|\lambda-\lambda'\|\sqrt{ ((\beta+\beta')L_{h_1} + \|\beta \lambda_1-\beta'\lambda_1'\|)^2+\|\beta \lambda_2-\beta'\lambda_2'\|^2}+{\beta}\|\lambda-\lambda'\|^2 \notag \\ \notag &\quad+\frac{ \beta}{2} \|\Lambda(p^\star(y,\lambda,\beta);\lambda,\beta)-\Lambda(p(x);\lambda,\beta)\|^2-\frac{\beta}{2}\|x^\star(y',\lambda',\beta')- x^\star(y,\lambda,\beta)\|^2\\& \quad-\frac{\beta}{2}\|x-y\|^2+\frac{\beta'}{2}\|x-y'\|^2+\frac{\beta}{2}\|x^\star(y',\lambda',\beta')-y\|^2  -\frac{\beta'}{2}\|x^\star(y',\lambda',\beta')-y'\|^2  \label{eq:dgere}
		\end{align}
		% \end{array}  \label{eq:dgere}  \end{equation}
		where the last inequality follows from Lemma~\ref{l::boundbetabetap} and Cauchy Schwartz inequality.
		Now we apply Lemma~\ref{l:erfgrte} with $w=x^\star(y,\lambda,\beta)$ and $w'=x^\star(y',\lambda',\beta')$ to obtain:
		%    \begin{equation}\begin{array}{ll}
		\begin{align}\label{eq:aaplerfgrte}
			& -\frac{\beta}{2}\| x^\star(y',\lambda',\beta')-x^\star(y,\lambda,\beta)\|^2+\frac{\beta}{2}\| x^\star(y',\lambda',\beta')-y\|^2-\frac{\beta'}{2}\|x^\star(y',\lambda',\beta')-y'\|^2 \\&\leq \frac{\beta}{2}\|x^\star(y,\lambda,\beta)-y'\|^2+ \frac{\beta(2\beta'+\beta)}{2(2\beta'-\beta)}\|y-y'\|^2. \notag
			%\end{array}        \end{equation}
		\end{align}
		Plugging~\eqref{eq:aaplerfgrte} into~\eqref{eq:dgere} with we derive~\eqref{eq:main2}.
	\end{proof}
	Next we give a proof for Proposition~\ref{prop:main2}.
	\begin{proof}[proof of Proposition~\ref{prop:main2}]
		We apply  Proposition~\ref{prop:m2} with  $\lambda=\lambda^s$, $\lambda'=\lambda^{s+1}$, $\beta=\beta_s$, $\beta'=\beta_{s+1}$, $x=x^s$, $y=x^{s-1}$ and $y'=x^s$ to obtain
		% \begin{equation*}\begin{array}{ll}
		\begin{align*}
			&H_{s+1}(x^s) -H_{s+1}^\star\\
			&\leq H_{s}(x^s) -H_{s}^\star+ \|\lambda^s-\lambda^{s+1}\|\sqrt{ \left((\beta_s+\beta_{s+1})L_{h_1}+\|\beta_s \lambda_1^s-\beta_{s+1}\lambda_1^{s+1}\| \right)^2
				+\|\beta_s \lambda_2^s-\beta_{s+1}\lambda_2^{s+1}\|^2 }\\&\quad+{\beta_s}\|\lambda^s-\lambda^{s+1}\|^2+
			\frac{\beta_s-\beta_{s+1}}{2}\| \Lambda(p(x^s); \lambda^{s+1},\beta^{s+1})-\lambda^{s+1}\|^2\\&\quad+
			\frac{\beta_{s+1}-\beta_s}{2}\| \Lambda(p^\star(x^s,\lambda^{s+1},\beta_{s+1}); \lambda^s,\beta_s)-\lambda^s\|^2+\frac{ \beta_s}{2} \|\Lambda(p^\star(x^{s-1},\lambda^s,\beta_s);\lambda^s,\beta_s)-\Lambda(p(x^s);\lambda^{s},\beta_s)\|^2\\&\quad+\frac{\beta_s}{2}\|x^\star(x^{s-1},\lambda^s,\beta_s)-x^s\|^2+ \frac{\beta_s(2\beta_{s+1}+\beta_s)}{2(2\beta_{s+1}-\beta_s)}\|x^{s-1}-x^s\|^2-\frac{\beta_s}{2}\|x^s-x^{s-1}\|^2.
		\end{align*}
		%   \end{array} \end{equation*}
		We apply Lemma~\ref{l:sL2} with $x=x^s$, $y=x^{s-1}$, $\lambda=\lambda^s$ and $\beta=\beta_s$ and get:
		$$
		\frac{ \beta_s}{2} \|\Lambda(p^\star(x^{s-1},\lambda^s,\beta_s);\lambda^s,\beta_s)-\Lambda(p(x^s);\lambda^{s},\beta_s)\|^2+\frac{\beta_s}{2}\|x^\star(x^{s-1},\lambda^s,\beta_s)-x^s\|^2\leq H_{s}(x^s) -H_{s}^\star.
		$$
		Furthermore, since $\beta_{s+1}\leq \beta_s$ we have,
		$$ \frac{\beta_{s+1}-\beta_s}{2}\| \Lambda(p^\star(x^s,\lambda^{s+1},\beta_s); \lambda^s,\beta_s)-\lambda^s\|^2\leq 0.$$
		We then derive~\eqref{eq:mainpop2} by  the latter three bounds.
	\end{proof}
	\begin{remark}
		If $$
		h(u)=\left\{\begin{array}{ll}0 & \mathrm{~if~} u=b\\
		+\infty & \mathrm{~otherwise~}
		\end{array}\right.
		$$
		for some constant vector $b\in \R^d$, for the reason stated in Remark~\ref{rem:hc},  the number of inner iterations $m_{s+1}$ in Algorithm~\ref{ipALM_m} can be taken as  the smallest integer satisfying
		\begin{equation*}
			\begin{array}{ll}&2\epsilon_s+{{\beta_s}}\|\lambda^{s+1}-\lambda^{s}\|^2+
				\frac{\beta_s-\beta_{s+1}}{2}\| \Lambda(p(x^s); \lambda^{s+1},\beta_{s+1})-\lambda^{s+1}\|^2+
				\frac{\beta_s^2}{2\beta_{s+1}-\beta_s}\|x^{s-1}-x^s\|^2 \leq {2^{\lfloor m_{s+1}/ K_{s+1}\rfloor}} {\epsilon_{s+1}/2 } .
			\end{array}
		\end{equation*}
	\end{remark}

	\section{Inner Solvers}\label{sec:total}
	In this section we recall  some algorithms satisfying Assumption~\ref{ass:innersolver} so that they can be used as inner solvers. Note that due to space limit we do not give the explicit form of the algorithms and refer the readers to the given references for details. This section is independent with the previous sections.
	
	Consider the following convex minimization problem:
	\begin{equation}\label{eq-prob}
	\begin{array}{ll}
	G^\star:= \displaystyle\min_{x\in \R^n} & \left[G(x)\equiv \phi(x)+P(x)\right],
	\end{array}
	\end{equation}
	where $P: \R^n\rightarrow \R\cup\{+\infty\}$ is a convex, proper and closed function  and $\phi:\R^n\rightarrow \R\cup\{+\infty\}$ is a  convex function differentiable on an open set containing $\dom(P)$. For any differentiable point $x\in \dom(\phi)$ and any $y\in \dom(\phi)$, we denote by $D_{\phi}(x;y)$ the Bregman distance from $x$ to $y$ with respect to the function $\phi$:
	$$
	D_{\phi}(y;x):=\phi(y)-\phi(x)-\<\nabla \phi(x),y-x>.
	$$
	\subsection{Accelerated Proximal Gradient}\label{sec:APG}
	Assume that there is $L>0$ such that
	\begin{align}\label{a:ftwq}
		D_{\phi}(y;x)\leq \frac{L}{2}\|x-y\|^2.
	\end{align}
	%$\nabla \phi$ is   $L$-Lipschitz continuous on $\dom(P)$ for some $L>0$.
	In addition, assume that there is $\mu>0$  such that for any $y\in \dom(P)$ there is $y^\star\in \arg\min \{ G(y): y\in \R^n\}$   satisfying 
	$$
	G(y)-G^\star\geq \frac{\mu}{2}\|y-y^\star\|^2.
	$$
	%We define the proximal mapping with respect to the function $G$ as follows:
	%$$T_P(x):= \arg\min_{y\in \R^n} \left\{\< \nabla \phi (x), y-x>+\frac{L}{2} \|y-x\|^2+P(y) \right\}.$$      
	%Then we have the following inequalities (see e.g.~\cite{ne07})
	%\begin{align}\label{a:TLyleqF1}
	%&\frac{\mu}{16L}  \left(G(T(x))-G^\star\right)\leq 
	%\frac{L}{2} \norm{T_P(x) - x}^2 \leq  p_2(x) -G(T_P(x))  \leq p_2(x) - G^\star.
	%\end{align}
	The accelerated proximal gradient (APG) method~\cite{nesterov1983method,beck2009fista,tseng2008accelerated} can be applied to solve problem~\eqref{eq-prob}.  If $\{x^k\}$ is the output after $k$ iterations of APG starting with $x^0$ as initial solution, then
	$$
	G(x^k) -G^\star \leq \frac{1}{2} \left(G(x^0)-G^\star\right),\enspace \forall k\geq 2\sqrt{2L/\mu},
	$$
	see e.g.~\cite{NecoaraNesGli,FercoqQu17}.

	\subsection{Accelerated Randomized Coordinate Descent}\label{sec:APPROX}
	There exist some variants of APG which may be more efficient when the problem dimension is high and the objective function has  certain separability.  
	If $P$ is separable, i.e., 
	$$
	P(x)\equiv \sum_{i=1}^n P_i (x_i),
	$$
	then the randomized coordinate extension of APG, known as APPROX~\cite{FR:2013approx},  can also be applied to solve~\eqref{eq-prob}. At each iteration, APPROX only updates a randomly selected set of coordinates. For simplicity let us consider the case when one coordinate is chosen uniformly at each iteration.  In this case denote by $v_i>0$ the constant satisfying the following condition:
	\begin{align}\label{a:erqwe}
		D_{\phi}(x+h e_i; x)\leq \frac{v_i}{2}h^2,\enspace \forall  x\in \dom(P), i\in [n], x+h e_i\in \dom(P).
	\end{align}
	In addition, assume that there is $\mu>0$  such that for any $y\in \dom(P)$ there is $y^\star\in \arg\min \{ G(y): y\in \R^n\}$   satisfying 
	$$
	G(y)-G^\star\geq \frac{\mu}{2}\|y-y^\star\|^2.
	$$
	If $\{x^k\}$ is the output after $k$ iterations of APPROX starting with $x^0$ as initial solution, then
	$$
	\bE\left [G(x^k)-G^\star\right ]\leq \frac{1}{2}\left (G(x^0)-G^\star\right ),\enspace \forall k\geq 2n\sqrt{2\max_i v_i/ \mu+2},
	$$
	see e.g.~\cite{FercoqQu18}. Note that when carefully implemented APPROX could have significantly reduced per-iteration cost than its deterministic origin APG, see~\cite{FR:2013approx}. The total computational saving is more  important when number of coordinates $n$ is larger.

	\subsection{Accelerated Stochastic Variance Reduced Method}\label{sec:Katy}
	If $P$ is $\mu$-strongly convex and $\phi$ is written as a large sum of convex functions, for example when
	$$
	\phi(x)=\frac{1}{m}\sum_{j=1}^m \phi^j (x), 
	$$
	where each $ \phi^j$ is convex and there is $L_j>0$ such that
	$$D_{\phi^j} (y;x)\leq \frac{L_i}{2}\|x-y\|^2,\enspace \forall x,y \in \dom(P).$$
	Then the accelerated stochastic variance reduced  methods, known as Katyusha~\cite{Katyusha,LKatyusha}, can be applied to solve~\eqref{eq-prob}.  Katyusha combined the techniques from stochastic gradient descent, variance reduction and Nesterov's acceleration method. In particular,  at each step, Katyusha randomly select a subset $S\subset [m]$ and use $\{\nabla \phi_j(\cdot): j\in S\}$ to form a stochastic estimator of the gradient $\nabla \phi(\cdot)$.  The  convergence rate depends on  the way we choose $S$, as shown in~\cite{LKatyusha}.  We will apply L-Katyusha\footnote{L-Katyusha stands for Loopless Katyusha. The algorithm Katyusha was first proposed by Allen-Zhu~\cite{Katyusha}. The loopless variants~\cite{LSVRG, LKatyusha} have the same complexity order as the original one but has simpler implementation form and improved practical efficiency. } using nonuniform sampling with replacement~\cite{LKatyusha}. More precisely,  we use the following stochastic gradient estimator:
	$$
	\sum_{j=1}^{\tau} p_j^{-1} \nabla \phi_{\sigma_j}(\cdot),
	$$
	where  $\sigma_j$ is a random integer equal to $j$ with probability $p_j:=L_j/(L_1+\dots L_m)$. Here $\tau \in [m]$ is the batch size. We shall consider the case when $\tau \leq \sqrt{m}$, for which there is linear speedup with respect to the increasing batch size.
	In this case, if $\{x^k\}$ is the output after $k$ iterations of L-Katyusha starting with $x^0$ as initial solution, then we know from~\cite{LKatyusha} that
	\begin{align}\label{a:katy}
		\bE\left [G(x^k)-G^\star\right]\leq \frac{1}{2}\left (G(x^0)-G^\star\right ),\enspace \forall k\geq 10\max\left(m, \sqrt{(L_1+\dots+L_m)/{\mu}}\right)/\tau.
	\end{align}
	%A proof is given in APPENDIX~\ref{app:katyu}. 
	Similarly, L-Katyusha becomes more efficient than APG when  $m$ is larger. Moreover it enjoys linear speedup with increasing batch size $\tau$.

	\subsection{Bregman Proximal Gradient}\label{sec:Breg}
	In this section, we recall the Bregman proximal gradient method for solving~\eqref{eq-prob}. This algorithm is an extension of the classical proximal gradient method in the case when $\phi$ does not have a Lipschitz continuous gradient but satisfies the so-called relative smoothness condition~\cite{Bolte16,LuFreudNesterov}. The latter means the existence of a  convex function $\xi(\cdot)$ differentiable on $\dom(P)$ and $L>0$ such that
	$$
	D_{\phi}(y;x)\leq L D_\xi(y;x), \enspace \forall x,y\in \dom(P).
	$$
	In addition, assume that there is $\mu>0$ such that  for any $y\in \dom(P)$, there is $y^\star\in \arg\min_y \{ G(y):y\in \R^n\}$ satisfying
	$$
	G(y)-G^\star\geq \mu D_\xi(x;y^\star).
	$$
	Let $\{x^k\}$ be the output after $k$ iterations of the Bregman proximal gradient method  starting with $x^0$ as initial solution. 
	Then by~\cite[Theorem 3.1]{LuFreudNesterov}, we have $$
	G(x^k)-G^\star\leq \frac{1}{2}\left( G(x^0)-G^\star\right),\enspace \forall k\geq 2L/\mu.
	$$
	Note that this method requires that the following problem
	$$
	\arg\min\{P(y)+\<\nabla \phi(x),y-x>+L D_{\xi}(y;x):y \in \R^n\},
	$$
	is easily solvable for any $x\in \dom(P)$.
	
	Before we end this section, we note that the above four methods, with appropriate restart if necessary, are linearly convergent. 
	
	\section{Applications}\label{sec:rterr}
	In this section we apply Algorithm~\ref{ipALM_m} in different circumstances using the inner solvers discussed in Section~\ref{sec:total}. We denote by $\mu_g\geq 0$ the strong convexity parameter of  the function $g$.  
	Recall that the objective function to be minimized at outer iteration $s$ is:
	\begin{align}\label{ass:dehin1}
		H_s(x)\equiv f(x)+g(x)+h(p(x);\lambda^s,\beta_s)+\frac{\beta_s}{2}\|x-x^{s-1}\|^2
	\end{align}
	which can be written in the form of~\eqref{eq-prob} as follows:
	$$
	H_s(x)= \phi_s(x)+P_s(x),
	$$
	with
	\begin{align}\label{a:phisdef1}
		\phi_s(x)\equiv f(x)+h(p(x);\lambda^s,\beta_s),\enspace P_s(x) \equiv g(x)+\frac{\beta_s}{2}\|x-x^{s-1}\|^2.
	\end{align}
	Note that due to Lemma~\ref{l:rterrr}, we have
	\begin{align}
		H_s(x)-H_s^\star\geq \frac{\beta_s+\mu_g}{2}\|x-y^\star\|^2+D_f(x;y^\star),\enspace \forall x \in \dom(g),\enspace y^\star= \arg\min_y H_s(y) . \label{a:dferefff}
	\end{align}
	\subsection{Composition with Linear Functions}\label{sec:li}
	Throughout this subsection we consider the special case when $p(x)$ is a linear function.  More precisely we focus on the following problem:
	\begin{align}
		\label{prime_affine}
		\min_{x\in \mathbb{R}^n} ~~&F(x)\equiv f(x)+ g(x)+ h_1(A_1 x)\\ \notag
		&~\mathrm{s.t.} \qquad A_2x\in \cK
	\end{align}
	where $A_1\in \R^{d_1\times  n}$ and $A_2\in \R^{d_2\times n}$. Recall that in this special case condition~\ref{ass:pq2} and~\ref{ass:pq3} automatically holds. In addition we have
	%The objective function of each inner optimization problem is:
	%\begin{align}\label{ass:dehin}
	%H_s(x)\equiv \phi_s(x)+p_2(x)+\frac{\beta_s}{2}\|x-x^s\|^2
	%\end{align}
	%where 
	\begin{align}\label{a:defphied}\phi_s(x)\equiv  f(x)+h(Ax;\lambda^s,\beta_s), \end{align}
	where $A=:\begin{pmatrix}
	A_1 \\ A_2
	\end{pmatrix}.
	$
	In view of Lemma~\ref{l:rterrr} and~\eqref{a:nablahLip}, the function $ \phi_s(\cdot)$ in~\eqref{a:defphied} is differentiable with respect to $x$ and
	\begin{align}
		\label{a:erdfgg}
		&D_{\phi_s}(y;x)\leq \frac{\|A\|^2}{2\beta_s}\|x-y\|^2+D_f(y;x),\enspace \forall x,y\in\dom(g).
		%\phi_s(y)\leq \phi_s(x)+\<\nabla \phi_s(x),y-x>+\frac{\beta^{-1}_s\|A\|^2}{2}\|x-y\|^2+D_f(y;x)
		%\|\nabla \phi_s(x)-\nabla \phi_s(y)\|\leq \|\nabla f(x)-\nabla f(y)\|+\beta^{-1}_s\|A\|^2\|x-y\|.
	\end{align}
	We next consider three subcases based on three different assumptions on the functions $f$ and $h$.

	\subsubsection{APG as inner solver}
	In this subsection we consider the case when the function $f$ satisfies the following additional assumption.
	\begin{assumption}\label{ass:add1}
		There is $ L>0 $ such that
		$$
		D_f(y;x)\leq \frac{L}{2}\|x-y\|^2,\enspace \forall x,y\in\dom(g).
		$$
	\end{assumption}
	Under Assumption~\ref{ass:add1}, it is clear from~\eqref{a:erdfgg} that 
	$$
	D_{\phi_s}(y;x)\leq  \frac{L+\beta^{-1}_s\|A\|^2}{2}\|x-y\|^2,\enspace \forall x,y\in\dom(g).
	%\|\nabla \phi_s(x)-\nabla \phi_s(y)\|\leq \left(L+\beta^{-1}_s\|A\|^2\right)\|x-y\|.
	$$
	%the  function  $H_s$~\eqref{ass:dehin} can be written as
	%the sum of a $L+\beta^{-1}_s\|A\|^2$-smooth function and a simple function. In addition, $H_s$ is $(\mu_g+\beta_s)$-strongly convex. 
	Together with~\eqref{a:dferefff}, we know from Section~\ref{sec:APG} that   in this case APG~\cite{nesterov1983method,beck2009fista,tseng2008accelerated} can be used as an inner solver with
	\begin{align}\label{a:KsAPG}
		K_s\leq 2\sqrt{\frac{2(L+\beta^{-1}_s\|A\|^2)}{\mu_g+\beta_s}}+1.
	\end{align}
	Then the following result follows directly from Corollary~\ref{coro:main}.
	\begin{coro}\label{coro:lAPG}
		Consider problem~\eqref{prime_affine} under Assumption~\ref{ass:handp} and~\ref{ass:add1}.  Let us apply Algorithm~\ref{ipALM_m} with APG~\cite{nesterov1983method,beck2009fista,tseng2008accelerated} as inner solver $\cA$. Then to obtain an $\epsilon$-solution in the sense of~\eqref{eq:eps},  the expected number of APG iterations is bounded by
		\begin{equation}\label{eq:APGin}\left\{ \begin{array}{ll}
		\tilde O\left(\frac{\sqrt{L\beta_0+\|A\|^2}+ \sqrt{\mu_g}}{\sqrt{\mu_g\epsilon}} \right) & \mathrm{if} ~\mu_g>0
		\\
		\tilde O\left(\frac{\sqrt{L\beta_0+\|A\|^2}}{{\epsilon}} \right) & \mathrm{if} ~\mu_g=0
		\end{array}\right.
		\end{equation}
	\end{coro}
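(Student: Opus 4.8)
The plan is to recognize that Corollary~\ref{coro:lAPG} is a direct specialization of Corollary~\ref{coro:main}: once the APG iteration bound~\eqref{a:KsAPG} is cast into the form $K_s\le\omega\beta_s^{-\ell}+\varsigma$ required by~\eqref{a:Ksbound}, the result follows by reading off $\omega$, $\varsigma$ and $\ell$ and substituting into the resulting $\tilde O((\omega+\varsigma)/\epsilon^\ell)$ bound. So the entire argument reduces to a book-keeping simplification of~\eqref{a:KsAPG}. First I would clear the fraction inside the square root by writing $\frac{L+\beta_s^{-1}\|A\|^2}{\mu_g+\beta_s}=\frac{L\beta_s+\|A\|^2}{\beta_s(\mu_g+\beta_s)}$, which isolates the quantity $L\beta_s+\|A\|^2$ in the numerator. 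Since $\{\beta_s\}$ is decreasing ($\beta_{s+1}=\rho\beta_s$ with $\rho<1$), we have $\beta_s\le\beta_0$ and hence $L\beta_s+\|A\|^2\le L\beta_0+\|A\|^2$; this produces exactly the numerator $\sqrt{L\beta_0+\|A\|^2}$ appearing in~\eqref{eq:APGin}.

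For the strongly convex case $\mu_g>0$, I would bound the denominator using $\mu_g+\beta_s\ge\mu_g$, giving $K_s\le 2\sqrt{2}\,\sqrt{L\beta_0+\|A\|^2}\,\mu_g^{-1/2}\,\beta_s^{-1/2}+1$. This is of the form~\eqref{a:Ksbound} with $\ell=1/2$, $\omega=2\sqrt{2}\sqrt{L\beta_0+\|A\|^2}/\sqrt{\mu_g}$ and $\varsigma=1$. Corollary~\ref{coro:main} then yields the bound $\tilde O((\omega+\varsigma)/\epsilon^{1/2})$, and since $\omega+\varsigma=(2\sqrt 2\sqrt{L\beta_0+\|A\|^2}+\sqrt{\mu_g})/\sqrt{\mu_g}$, this is precisely $\tilde O((\sqrt{L\beta_0+\|A\|^2}+\sqrt{\mu_g})/\sqrt{\mu_g\epsilon})$, the first line of~\eqref{eq:APGin}.

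For the non-strongly convex case $\mu_g=0$, the same rewriting gives $\mu_g+\beta_s=\beta_s$, so $K_s\le 2\sqrt 2\sqrt{L\beta_0+\|A\|^2}\,\beta_s^{-1}+1$, i.e.\ the form~\eqref{a:Ksbound} with $\ell=1$, $\omega=2\sqrt 2\sqrt{L\beta_0+\|A\|^2}$ and $\varsigma=1$. Corollary~\ref{coro:main} then gives $\tilde O((\omega+\varsigma)/\epsilon)=\tilde O(\sqrt{L\beta_0+\|A\|^2}/\epsilon)$, matching the second line of~\eqref{eq:APGin}, the additive $\varsigma=1$ being absorbed into the $\tilde O$ as an inner-solver-independent constant. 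There is no genuine obstacle beyond careful constant tracking; the one point requiring slight care is the deliberate choice to bound $\beta_s\le\beta_0$ only inside the numerator while retaining the exact $\beta_s$ power in the denominator, so that the correct exponent $\ell$ (namely $1/2$ versus $1$) and the correct dependence on $\mu_g$ are extracted in each regime.
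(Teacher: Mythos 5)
Your proposal is correct and follows essentially the same route as the paper's proof: rewrite \eqref{a:KsAPG} so that $L\beta_s+\|A\|^2\le L\beta_0+\|A\|^2$ isolates an inner-solver-independent numerator, drop either the $\beta_s^2$ or the $\mu_g\beta_s$ term in the denominator to read off $(\ell,\omega,\varsigma)$ as $(1/2,\,2\sqrt{2(L\beta_0+\|A\|^2)/\mu_g},\,1)$ or $(1,\,2\sqrt{2(L\beta_0+\|A\|^2)},\,1)$, and invoke Corollary~\ref{coro:main}. The constant bookkeeping, including the $+\sqrt{\mu_g}$ in the numerator arising from $\varsigma=1$, matches the paper exactly.
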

	
	\subsubsection{Large scale structured problem}
	In this subsection we consider the following structured special case of~\eqref{prime_affine}.
	\begin{align}
		\label{prime:special1}
		\min_{x\in \mathbb{R}^n} ~~&F(x)\equiv \sum_{j=1}^{m_1} f_j(B_j x)+ \sum_{i=1}^n g_i(x^i)+ \sum_{j={m_1+1}}^{m_2}\psi_j(B_j x)\\ \notag
		&~\mathrm{s.t.} \qquad B_jx \in \cK_j,\enspace j\in\{m_2+1,\dots, m-m_2\}
	\end{align}
	Here $ \{B_j: j\in [m]\}$  are matrices/vectors of appropriate dimensions.   In addition we make the following assumption.
	\begin{assumption}\label{ass:add2}
		The functions $g_i, \psi_j$ are all convex, proper closed  and simple functions. The sets $\cK_j$ are all convex,  closed and simple sets. Moreover, for  each  $j\in [m_1]$, the function $f_j$ is convex and 
		$$
		D_{f_j}(y;x)\leq \frac{1}{2}\|x-y\|^2,\enspace \forall x,y\in \dom(g).
		$$
		For each $j\in [m_2]$, the function $\psi_j$ is Lipschitz continuous.
	\end{assumption}
	In this case, the function $\phi_s$ defined as in~\eqref{a:defphied} can be written in the form of finite sum problem:
	$$
	\phi_s(x)\equiv \frac{1}{m}\sum_{j=1}^m \phi_s^j(x).
	$$
	Here, the functions $\phi_s^j$ are such that that  for each $j\in [m_1]$,
	\begin{align}\label{a:w23}
		& D_{\phi_s^j}(x+h e_i; x)\leq \frac{m(B_j^\top B_j)_{i,i}}{2}h^2,\enspace  \forall  x\in \dom(g), i\in [n], x+h e_i\in \dom(g),
		\\\label{a:w24}
		& D_{\phi_s^j} (y;x)\leq \frac{m\|B_j\|^2}{2}\|x-y\|^2,\enspace \forall x,y \in \dom(g),
	\end{align}
	and for each $j\in \{m_1+1,\dots,m\}$,
	\begin{align}\label{a:w25}
		& D_{\phi_s^j}(x+h e_i; x)\leq \frac{m(B_j^\top B_j)_{i,i}}{2\beta_s}h^2,\enspace \forall  x\in \dom(g), i\in [n], x+h e_i\in \dom(g),
		\\ \label{a:w26}
		& D_{\phi_s^j} (y;x)\leq \frac{m\|B_j\|^2}{2\beta_s}\|x-y\|^2,\enspace \forall x,y \in \dom(g).
	\end{align}
	Combining~\eqref{a:w23} and~\eqref{a:w25} we get
	$$ D_{\phi_s}(x+h e_i; x)\leq \frac{\sum_{j=1}^{m_1}(B_j^\top B_j)_{i,i}+ \beta_s^{-1}\sum_{j=m_1+1}^{m}(B_j^\top B_j)_{i,i} }{2}h^2,\enspace  \forall  x\in \dom(g), i\in [n], x+h e_i\in \dom(g).
	$$
	In view of Section~\ref{sec:APPROX}, APPROX can be used as inner solver with
	$$
	K_s\leq 2n\sqrt{\frac{2\max_i  \left(\sum_{j=1}^{m_1}(B_j^\top B_j)_{i,i}+ \beta_s^{-1}\sum_{j=m_1+1}^{m}(B_j^\top B_j)_{i,i}\right) }{ \mu_g+\beta_s}+2}+1.
	$$
	In view of \eqref{a:w24},~\eqref{a:w26} and Section~\ref{sec:Katy}, L-Katyusha with batch size $\tau\leq \sqrt{m}$ can also be used as inner solver with
	$$
	K_s\leq {10}\max\left(m, \sqrt{\frac{m\sum_{j=1}^{m_1} \|B_j\|^2 +m \beta_s^{-1}\sum_{j={m_1+1}}^{m} \|B_j\|^2}{{\mu_g+\beta_s}}}\right)/\tau+1.
	$$
	
	\begin{coro}\label{coro:LL}
		Consider problem~\eqref{prime:special1} under Assumption~\ref{ass:handp} and~\ref{ass:add2}.  Let us apply Algorithm~\ref{ipALM_m} with restart APPROX~\cite{FercoqQu18} as inner solver $\cA$. Then to obtain an $\epsilon$-solution in the sense of~\eqref{eq:eps},  the expected number of APPROX iterations is bounded by
		\begin{equation}\label{eq:APPin}\left\{ \begin{array}{ll}
		\tilde O\left(\frac{n\sqrt{\max_i  \left(\beta_0 \sum_{j=1}^{m_1}(B_j^\top B_j)_{i,i}+\sum_{j=m_1+1}^{m}(B_j^\top B_j)_{i,i}\right)}+{n}\sqrt{\mu_g}}{\sqrt{\mu_g\epsilon}} \right) & \mathrm{if} ~\mu_g>0
		\\
		\tilde O\left(\frac{n\sqrt{\max_i  \left(\beta_0 \sum_{j=1}^{m_1}(B_j^\top B_j)_{i,i}+\sum_{j=m_1+1}^{m}(B_j^\top B_j)_{i,i}\right)}+n}{{\epsilon}} \right) & \mathrm{if} ~\mu_g=0
		\end{array}\right.
		\end{equation}
		If we apply Algorithm~\ref{ipALM_m} with L-Katyusha~\cite{LKatyusha} as inner solver $\cA$ and  mini batch size $\tau\leq \sqrt{m}$, then to obtain an $\epsilon$-solution in the sense of~\eqref{eq:eps},  the expected number of L-Katyusha iterations is bounded by
		\begin{equation}\label{eq:LKin}\left\{ \begin{array}{ll}
		\tilde O\left(\frac{ \sqrt{{m\beta_0\sum_{j=1}^{m_1} \|B_j\|^2 +m \sum_{j={m_1+1}}^{m} \|B_j\|^2}}+ m\sqrt{\mu_g}}{\tau \sqrt{\mu_g\epsilon}} \right) & \mathrm{if} ~\mu_g>0
		\\
		\tilde  O\left(\frac{ \sqrt{{m\beta_0\sum_{j=1}^{m_1} \|B_j\|^2 +m \sum_{j={m_1+1}}^{m} \|B_j\|^2}}+ m}{\tau {\epsilon}} \right)&\mathrm{if} ~\mu_g=0
		\end{array}\right.
		\end{equation}
		
	\end{coro}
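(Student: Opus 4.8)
The plan is to reduce everything to the black-box complexity result already established, namely Corollary~\ref{coro:main} (which specializes Theorem~\ref{main:theoemalgo2}), and to verify that APPROX and L-Katyusha meet its hypotheses with explicitly computable constants. The discussion preceding the statement has already shown that, under Assumption~\ref{ass:add2}, the inner objective $H_s$ decomposes as $\phi_s+P_s$ with $\phi_s=\frac{1}{m}\sum_{j=1}^m\phi_s^j$ a finite sum and $P_s=g+\frac{\beta_s}{2}\|\cdot-x^{s-1}\|^2$ which is $(\mu_g+\beta_s)$-strongly convex, and that the coordinate- and component-wise estimates~\eqref{a:w23}--\eqref{a:w26} hold. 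Together with the quadratic growth bound~\eqref{a:dferefff} (which supplies the constant $\mu=\mu_g+\beta_s$ required in Sections~\ref{sec:APPROX} and~\ref{sec:Katy}, since $D_f\geq 0$), these facts certify that restarted APPROX and L-Katyusha are linearly convergent inner solvers in the sense of Assumption~\ref{ass:innersolver}, with the per-stage halving counts $K_s$ displayed just above the statement. So the first step is simply to record those $K_s$ expressions.

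The central step is to cast each $K_s$ into the form required by~\eqref{a:Ksbound}, i.e.\ $K_s\leq\omega\beta_s^{-\ell}+\varsigma$. The two elementary tools are the monotonicity $\beta_s=\beta_0\rho^s\leq\beta_0$, used to replace every factor $\beta_s\sum_{j\leq m_1}(\cdots)$ by $\beta_0\sum_{j\leq m_1}(\cdots)$ inside the square roots, and the subadditivity $\sqrt{a+b}\leq\sqrt a+\sqrt b$ together with $\max(a,b)\leq a+b$, used to peel off an additive constant from the $\beta_s$-dependent term. Concretely, for APPROX I would take $\frac{2\max_i v_i}{\mu_g+\beta_s}+2$ under the root, bound $\max_i v_i=\max_i\bigl(\sum_{j\leq m_1}(B_j^\top B_j)_{i,i}+\beta_s^{-1}\sum_{j>m_1}(B_j^\top B_j)_{i,i}\bigr)\leq\beta_s^{-1}\max_i\bigl(\beta_0\sum_{j\leq m_1}(B_j^\top B_j)_{i,i}+\sum_{j>m_1}(B_j^\top B_j)_{i,i}\bigr)$, and then split the root; the analogous manipulation for L-Katyusha bounds the argument of the inner square root by $\beta_s^{-1}(m\beta_0\sum_{j\leq m_1}\|B_j\|^2+m\sum_{j>m_1}\|B_j\|^2)/(\mu_g+\beta_s)$ and splits the outer $\max(m,\cdot)$. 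In both cases one separates into the two regimes: bounding $\mu_g+\beta_s$ below by $\mu_g$ gives $\ell=\tfrac12$ in the strongly convex case, while bounding it below by $\beta_s$ gives $\ell=1$ in the non-strongly convex case. This identifies $\omega$, the order of $\varsigma$, and $\ell$.

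The final step is mechanical substitution into Corollary~\ref{coro:main}, which yields $\tilde O((\omega+\varsigma)/\epsilon^\ell)$; combining the two additive pieces over the common denominator $\sqrt{\mu_g\epsilon}$ (for $\mu_g>0$) or $\epsilon$ (for $\mu_g=0$) produces exactly the four displayed bounds~\eqref{eq:APPin}--\eqref{eq:LKin}. I do not expect any genuine obstacle, since all the analytic content sits in Theorem~\ref{main:theoemalgo2} and what remains is the bookkeeping of the middle step. The one place that requires care is the order of the additive constant $\varsigma$: one must track that the leftover $+2$ inside APPROX's square root contributes a term of order $n$ (not $1$), and that L-Katyusha's outer $\max(m,\cdot)$ contributes a term of order $m/\tau$, because these are precisely the second numerator terms ($+n$, $+n\sqrt{\mu_g}$ and $+m$, $+m\sqrt{\mu_g}$) appearing in the statement.
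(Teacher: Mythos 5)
Your proposal is correct and follows essentially the same route as the paper: the paper proves Corollary~\ref{coro:lAPG} by recasting the $K_s$ bound into the form~\eqref{a:Ksbound} (splitting into $\mu_g+\beta_s\geq\mu_g$ for $\ell=\tfrac12$ and $\mu_g+\beta_s\geq\beta_s$ for $\ell=1$, with $\beta_s\leq\beta_0$ to pull out $\beta_0$ factors) and then invoking Corollary~\ref{coro:main}, and it states that Corollary~\ref{coro:LL} is proved in the same way from the $K_s$ expressions for APPROX and L-Katyusha displayed before the statement. Your bookkeeping of $\omega$, $\varsigma$ (including the $O(n)$ and $O(m/\tau)$ additive terms) and $\ell$ matches what the paper's argument yields.
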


	Since~\eqref{prime:special1} is a special case of~\eqref{prime_affine}, we can also use APG as inner solver and apply  Corollary~\ref{coro:lAPG}. However, note that the bounds provided by~\eqref{eq:APGin}, ~\eqref{eq:APPin} and~\eqref{eq:LKin} are not directly comparable since the iteration cost of APG, APPROX and L-Katyusha are different. 
	When carefully implemented, $n$ iterations of APPROX or $m/\tau$ iterations of L-Katyusha with mini-batch size $\tau$ has the same order of computational complexity as one iteration of APG. Indeed, $n$ iterations of APPROX or $m/\tau$ iterations of L-Katyusha is in expectation  equivalent to one full gradient evaluation (i.e., computation of the gradient of $\phi_s$), which is required in every iteration of APG. We provide in Table~\ref{tab_0} a comparison of the three inner solvers in terms of \textit{batch complexity}, i.e. the number of full gradient evaluation. To simplify we consider the case when $\|B_j\|^2=1$ for all $j\in [m]$ and let $\beta_0=1$ and let
	\begin{equation}\label{eq:matrixB}
	\cB:=\begin{pmatrix}
	B_1^\top & \cdots & B_m^\top
	\end{pmatrix}
	\begin{pmatrix}
	B_1 \\  \vdots \\ B_m
	\end{pmatrix}\in \R^{n\times n}.
	\end{equation}
	Note that 
	\begin{equation}\label{a:erddd}
	\begin{array}{l}
	\lambda_{\max}(\cB)\geq \max_i  \lambda_{\max} (B_i^\top B_i )= \max_i   \|B_i \|^2 =1,\\
	\lambda_{\max}(\cB) \geq \max_i \cB_{i,i}\geq \frac{\trace(\cB)}{n}=\frac{1}{n}\sum_{i=1}^m \trace(B_i^\top B_i)\geq \frac{1}{n} \sum_{i=1}^m \|B_i \|^2 = \frac{m}{n}
	\end{array}
	\end{equation}
	In addition, note that the bound in~\eqref{a:erddd} is conservative. Indeed, the maximal eigenvalue is often much larger than the maximal diagonal element. 
	We then draw the following conclusion from Table~\ref{tab_0}.
	\begin{enumerate}
		\item Using APPROX or L-Katyusha as inner solver yields better batch complexity bound than using APG as inner solver.
		\item When $m\gg n$, using L-Katyusha as inner solver yields better batch complexity bound than using APPROX as inner solver. 
	\end{enumerate}
	\begin{table}
		\centering
		\begin{tabular}{|c|c|}
			\hline
			Inner solver & strongly convex case ($\mu_g>0$) \\
			\hline
			APG (Corollary~\ref{coro:lAPG})& $\tilde O\left(\frac{\lambda_{\max} (\cB)}{\sqrt{  \mu_g\epsilon}} \right)$ \\
			\hline
			APPROX (Corollary~\ref{coro:LL})& $\tilde O\left(\frac{\max_i \cB_{i,i}}{\sqrt{  \mu_g\epsilon}} \right)$  \\
			\hline
			L-Katyusha (Corollary~\ref{coro:LL})& $\tilde O\left(\frac{1}{\sqrt{  \mu_g\epsilon}} \right)$ \\
			\hline 
			& non-strongly convex case ($\mu_g=0$) \\
			\hline
			APG (Corollary~\ref{coro:lAPG})& $\tilde O\left(\frac{\lambda_{\max} (\cB)}{{ \epsilon}} \right)$ \\ 
			\hline
			APPROX (Corollary~\ref{coro:LL})& $\tilde O\left(\frac{\max_i \cB_{i,i}}{{ \epsilon}} \right)$\\
			\hline
			L-Katyusha (Corollary~\ref{coro:LL}) &  $\tilde  O\left(\frac{1} {\epsilon} \right)$ \\
			\hline
		\end{tabular}
		\caption{Comparison of batch complexity bounds  of Algorithm~\ref{ipALM_m} applied on problem~\eqref{prime:special1} using different inner solvers. Here we consider the special case when $\|B_j\|^2=1$ for all $j\in [m]$ and let $\beta_0=1$.   The matrix $\cB$ is defined as in~\eqref{eq:matrixB} and $\cB_{i,i}$ denotes the $i$th diagonal element of $\cB$. %$\begin{pmatrix} B_1^\top & \cdots & B_m^\top \end{pmatrix}$
		}
		\label{tab_0}
	\end{table}	
	\begin{remark}[Parallel Linear Speedup]\label{rem:pls}
		Note that  the batch complexity bound for L-Katyusha in Table~\ref{tab_0} is independent of the mini batch size $\tau$. 
		This means that  Algorithm~\ref{ipALM_m} with L-Katyusha as inner solver enjoys a parallel linear speedup  when $\tau\leq \sqrt{m}$. 
	\end{remark}
	
	\begin{remark}When we compare the bounds in Table~\ref{tab_0} with other related work, some additional transformation is needed due to different problem formulation. Here we provide one example of comparing the bounds of our Table~\ref{tab_0} with the complexity bound established in~\cite{li2018complexity} for one special case of problem~\eqref{prime:special1} when $m_1=0$ and $m_2=m$. Consider the following regularized empirical risk minimization model with $\mu_g>0$:
		\begin{equation}
		\label{prime:specialerm}
		\min_{x\in \mathbb{R}^n} ~~F(x)\equiv  \sum_{i=1}^n g_i(x^i)+ \frac{1}{m}\sum_{j={1}}^{m}m\psi_j(B_j x)
		\end{equation}
		which corresponds to problem (1.2) in~\cite{li2018complexity}.
		W.l.o.g. we assume that each $\psi_j$ in is $1$-Lipschitz continuous so that we know $L_{h_1}\leq \sqrt{m}$. Then by \cite[Corollary 3]{li2018complexity}, the number of iterations of Algorithm 4 in~\cite{li2018complexity} is bounded by $O\left(m\sqrt{\frac{m}{\mu_g \epsilon}}\right)$, which corresponds to a batch complexity bound $O\left(\sqrt{\frac{m}{\mu_g \epsilon}}\right)$.
		The $\tilde O$ in Table~\ref{tab_0} hides the constant $c_3$ defined in~\eqref{a:defc3} which is proportional to $\sqrt{c_1}$ when $\mu_g>0$. Recall the definition of $c_1$ in~\eqref{a:defc1}, which is bounded by $O(L_{h_1}^2)=O(m)$.
		Hence the batch complexity bound of our Algorithm~\ref{ipALM_m} with L-Katyusha as inner solver for problem~\eqref{prime:specialerm} is $\tilde O\left(\sqrt{\frac{m}{\mu_g \epsilon}}\right)$, which differs from the bound of~\cite{li2018complexity} by a logarithm term. Nevertheless, note that our Algorithm~\ref{ipALM_m} with L-Katyusha
		can enjoy a  linear speedup up to $\tau\leq \sqrt{m}$ if parallel implementation is used, see Remark~\ref{rem:pls}.
	\end{remark}

	\subsubsection{Bregman proximal gradient as inner solver}\label{sec:LBPG}
	In this subsection we consider the case when $f$  is relatively smooth.
	\begin{assumption}\label{ass:rt4}There is  a  convex function $\xi$ differentiable on an open set containing $\dom(g)$ and $L>\mu>0$ such that
		$$
		\mu D_\xi(y;x)   \leq D_f(y;x )\leq L D_\xi(y;x), \enspace \forall x,y\in \dom(g).
		$$
		Moreover, for any $\alpha, \beta>0$, $x\in \dom(\xi)$ and $x'\in \R^n$ the problem 
		$$
		\min_y\left\{g(y)+\frac{\beta}{2}\|y-x'\|^2+\alpha D_\xi(y;x)\right\}
		$$
		is easily solvable.
	\end{assumption}
	In this case, by~\eqref{a:erdfgg} we know that
	\begin{align*}
		D_{\phi_s}(y;x)&\leq \frac{\beta^{-1}_s\|A\|^2}{2}\|x-y\|^2+L D_\xi(y;x)\\&\leq 
		\max\left(\beta^{-1}_s\|A\|^2, L \right) \left(\frac{1}{2}\|x-y\|^2+D_{\xi}(y;x)\right)\enspace \forall x,y\in\dom(g).
	\end{align*}
	Moreover, by~\eqref{a:dferefff} we know that
	\begin{align*}
		H_s(x)-H_s^\star&\geq \frac{\beta_s+\mu_g}{2}\|x-y^\star\|^2+\mu D_\xi(x;y^\star)\\&\geq \min\left(\beta_s+\mu_g,\mu\right) \left(\frac{1}{2}\|x-y^\star\|^2+D_{\xi}(x;y^\star)\right),\enspace \forall x \in \dom(g),\enspace y^\star\in \arg\min_y H_s(y) . 
	\end{align*}
	Therefore, based on Section~\ref{sec:Breg}, the Bregman proximal gradient can be used as an inner solver with 
	$$
	K_s\leq \frac{2\max\left(\beta^{-1}_s\|A\|^2, L \right)}{ \min\left(\beta_s+\mu_g,\mu\right)}+ 1.
	$$
	\begin{coro}\label{coro:BPG}
		Consider problem~\eqref{prime_affine} under Assumption~\ref{ass:handp} and~\ref{ass:rt4}.  Let us apply Algorithm~\ref{ipALM_m} with Bregman proximal gradient~\cite{Bolte16,LuFreudNesterov} as inner solver $\cA$. Then to obtain an $\epsilon$-solution in the sense of~\eqref{eq:eps},  the expected number of Bregman proximal gradient iterations is bounded by
		$$\left\{ \begin{array}{ll}
		\tilde O\left(\frac{\max\left( \|A\|^2, L\beta_0\right)+ \min(\mu_g,\mu)}{ \min(\mu_g,\mu)\epsilon} \right) & \mathrm{if} ~\min(\mu_g,\mu)>0
		\\
		\tilde  O\left(\frac{ \max\left( \|A\|^2, L\beta_0\right) }{{\epsilon^2}} \right)&\mathrm{if} ~\min(\mu_g,\mu)=0
		\end{array}\right.
		$$
	\end{coro}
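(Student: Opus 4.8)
The plan is to reduce the statement to a direct application of Corollary~\ref{coro:main}, feeding in the bound on the inner-solver modulus $K_s$ that has just been derived, namely
$$
K_s\leq \frac{2\max\left(\beta^{-1}_s\|A\|^2, L \right)}{ \min\left(\beta_s+\mu_g,\mu\right)}+ 1.
$$
The only genuine work is to massage this into the template $K_s\leq \omega\beta_s^{-\ell}+\varsigma$ required by Theorem~\ref{main:theoemalgo2}, with the correct exponent $\ell$ in each of the two regimes, and then read off the constants.

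First I would clear the numerator. Writing $\max(\beta^{-1}_s\|A\|^2, L)=\beta_s^{-1}\max(\|A\|^2, L\beta_s)$ and using $\beta_s=\beta_0\rho^s\leq \beta_0$, I obtain $\max(\beta^{-1}_s\|A\|^2, L)\leq \beta_s^{-1}\max(\|A\|^2, L\beta_0)$, so that
$$
K_s\leq \frac{2\max(\|A\|^2, L\beta_0)}{\beta_s\,\min(\beta_s+\mu_g,\mu)}+1.
$$
Since Assumption~\ref{ass:rt4} forces $\mu>0$, the dichotomy $\min(\mu_g,\mu)>0$ versus $\min(\mu_g,\mu)=0$ is exactly the dichotomy $\mu_g>0$ versus $\mu_g=0$, which is how I would split the argument.

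In the case $\mu_g>0$ I would lower-bound the denominator uniformly in $s$: since $\beta_s>0$ and $\min$ is monotone in each argument, $\min(\beta_s+\mu_g,\mu)\geq \min(\mu_g,\mu)>0$, hence
$$
K_s\leq \underbrace{\frac{2\max(\|A\|^2, L\beta_0)}{\min(\mu_g,\mu)}}_{\omega}\cdot\frac{1}{\beta_s}+\underbrace{1}_{\varsigma},
$$
which is the template with $\ell=1$. Plugging $\omega,\varsigma,\ell$ into Corollary~\ref{coro:main} gives $\tilde O((\omega+\varsigma)/\epsilon)$, and after clearing denominators this is precisely $\tilde O\!\left(\frac{\max(\|A\|^2,L\beta_0)+\min(\mu_g,\mu)}{\min(\mu_g,\mu)\epsilon}\right)$.

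In the case $\mu_g=0$ the denominator becomes $\beta_s\min(\beta_s,\mu)$, and here the exponent jumps to $\ell=2$. The one step that needs slight care is the regime where $\beta_s>\mu$, possible for small $s$: there $\beta_s\min(\beta_s,\mu)=\beta_s\mu$, and using $\beta_s\leq\beta_0$ I would bound $\frac{1}{\beta_s\min(\beta_s,\mu)}\leq \frac{\max(1,\beta_0/\mu)}{\beta_s^2}$ uniformly in $s$, so that the template holds with $\ell=2$, $\omega=2\max(\|A\|^2,L\beta_0)\max(1,\beta_0/\mu)$ and $\varsigma=1$. Because $\beta_0$ and $\mu$ are inner-solver-independent constants absorbed by $\tilde O$, Corollary~\ref{coro:main} then delivers $\tilde O(\max(\|A\|^2,L\beta_0)/\epsilon^2)$. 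The whole argument is essentially bookkeeping; the only genuinely non-automatic point is recognizing that $\mu_g$ dictates $\ell$ (with $\ell=1$ when $\mu_g>0$ and $\ell=2$ when $\mu_g=0$), which traces back to whether $\mu_g$, or only the proximal term $\frac{\beta_s}{2}\|x-x^{s-1}\|^2$, supplies the effective strong convexity of $H_s$ as $\beta_s\to 0$.
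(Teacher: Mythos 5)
Your proposal is correct and takes essentially the same route as the paper: the paper proves this corollary exactly as it proves Corollary~\ref{coro:lAPG}, namely by casting the bound $K_s\leq 2\max(\beta_s^{-1}\|A\|^2,L)/\min(\beta_s+\mu_g,\mu)+1$ into the template $K_s\leq \omega\beta_s^{-\ell}+\varsigma$ of Theorem~\ref{main:theoemalgo2}, with $\ell=1$ when $\mu_g>0$ and $\ell=2$ when $\mu_g=0$, and then invoking Corollary~\ref{coro:main}. The factor $\max(1,\beta_0/\mu)$ you make explicit in the non-strongly convex case is unavoidable in the paper's own derivation as well and is absorbed into the $\tilde O$, consistent with its stated conventions.
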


	\subsection{Composition with Nonlinear Functions}\label{sec:nonlinear}
	In this section we consider the general case when $p(x)$ is possibly nonlinear.
	\begin{assumption}\label{ass:ewggg}
		There is $M_{p_2}>0$, $M_{\nabla p}>0$ , $L>0$ and $L_{\nabla p}>0$ such that
		\begin{align}\label{a:ererrrrewr}
			&\|p_2(x)\|\leq M_{p_2},\enspace \forall x\in \dom(g),\\
			&\|\nabla p(x)\|\leq M_{\nabla p}, \enspace \forall x\in \dom(g),\\
			&\|\nabla p(x)-\nabla p(y)\|\leq L_{\nabla p}\|x-y\|,\enspace \forall x,y\in \dom(g)
		\end{align}
	\end{assumption}
	Note that the same type of assumptions was used in~\cite[Section 2.4]{lu18}. In particular as mentioned in~\cite{lu18}, if the domain of $g$ is compact then Assumption~\ref{ass:ewggg} holds. Assumption~\ref{ass:ewggg}  is made in order to obtain the smoothness of the function $\nabla \phi_s$.
	Recall from~\eqref{a:phisdef1} and Lemma~\ref{l:rterrr} that
	$$
	\nabla \phi_s(x)=\nabla f(x)+\nabla p(x) \Lambda(p(x);\lambda^s,\beta_s).
	$$
	\begin{lemma}\label{l:ersdgg}
		Under Assumption~\ref{ass:ewggg},
		$$
		\|\nabla \phi_s(x)-\nabla \phi_s(y)\|\leq  \|\nabla f(x)-\nabla f(y)\|+  L_{\nabla p} \left(L_{h_1}+ \beta_s^{-1}d_s\right)+ M^2_{\nabla p}\beta_s^{-1}\|x-y\|,\enspace \forall x,y \in\dom(g),
		$$
		where
		$$
		d_s:= \max_y \min_x \{ \| x-y\|: x\in \cK, \|y\|\leq M_{p_2}+\beta_s\|\lambda_2^s\| \} <+\infty.
		$$
	\end{lemma}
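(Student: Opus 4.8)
The plan is to start from the gradient formula in Lemma~\ref{l:rterrr}, namely $\nabla \phi_s(x)=\nabla f(x)+\nabla p(x)\Lambda(p(x);\lambda^s,\beta_s)$, and to split the difference $\nabla\phi_s(x)-\nabla\phi_s(y)$ by inserting a cross term:
\begin{align*}
\nabla\phi_s(x)-\nabla\phi_s(y)&=\left(\nabla f(x)-\nabla f(y)\right)+\left(\nabla p(x)-\nabla p(y)\right)\Lambda(p(x);\lambda^s,\beta_s)\\
&\quad+\nabla p(y)\left(\Lambda(p(x);\lambda^s,\beta_s)-\Lambda(p(y);\lambda^s,\beta_s)\right).
\end{align*}
Taking norms and using the triangle inequality reduces the lemma to controlling three quantities: the raw gradient gap $\norm{\nabla f(x)-\nabla f(y)}$, which I keep as is; the factor $\norm{\Lambda(p(x);\lambda^s,\beta_s)}$ multiplying $\norm{\nabla p(x)-\nabla p(y)}\leq L_{\nabla p}\norm{x-y}$; and the increment $\norm{\Lambda(p(x);\lambda^s,\beta_s)-\Lambda(p(y);\lambda^s,\beta_s)}$ multiplying $\norm{\nabla p(y)}\leq M_{\nabla p}$.

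The two Lipschitz-type factors are handled directly from the assumptions and Lemma~\ref{l:erdfgt}. Since $\Lambda(\cdot;\lambda^s,\beta_s)=\nabla_1 h(\cdot;\lambda^s,\beta_s)$ by~\eqref{a:nablehu} is $\beta_s^{-1}$-Lipschitz by~\eqref{a:nablahLip}, and since $\norm{\nabla p(x)}\leq M_{\nabla p}$ implies $\norm{p(x)-p(y)}\leq M_{\nabla p}\norm{x-y}$ by the mean value inequality, the third term is bounded by $M_{\nabla p}\cdot\beta_s^{-1}\norm{p(x)-p(y)}\leq M_{\nabla p}^2\beta_s^{-1}\norm{x-y}$, which accounts for the last term in the claimed bound.

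The crux is the remaining factor $\norm{\Lambda(p(x);\lambda^s,\beta_s)}$, which I claim is at most $L_{h_1}+\beta_s^{-1}d_s$; this is where the definition of $d_s$ enters and is the main obstacle. I would exploit the block-separable structure $h=h_1+h_2$, which makes $\Lambda=(\Lambda_1;\Lambda_2)$ decouple. For the first block, the maximizer $\Lambda_1$ in~\eqref{a:Lambda} must lie in $\dom(h_1^*)$, and since $h_1$ is $L_{h_1}$-Lipschitz this domain is contained in the ball of radius $L_{h_1}$, giving $\norm{\Lambda_1}\leq L_{h_1}$. For the second block, I would use the dual form~\eqref{a:dualityhbeta} applied to $h_2$, the indicator of $\cK$: after substituting $z=p_2(x)-w\in\cK$ and completing the square, the inner minimization becomes a Euclidean projection, so that with $y:=p_2(x)+\beta_s\lambda_2^s$ one obtains the closed form $\Lambda_2=\beta_s^{-1}(y-P_\cK(y))$, where $P_\cK$ denotes the projection onto $\cK$. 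Hence $\norm{\Lambda_2}=\beta_s^{-1}\dist(y,\cK)$.

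Finally, the bound $\norm{y}\leq \norm{p_2(x)}+\beta_s\norm{\lambda_2^s}\leq M_{p_2}+\beta_s\norm{\lambda_2^s}$ places $y$ in the ball over which $d_s$ is the maximal distance to $\cK$, so $\dist(y,\cK)\leq d_s$ and $\norm{\Lambda_2}\leq\beta_s^{-1}d_s$; finiteness of $d_s$ follows from maximizing a continuous function over a compact ball. Combining $\norm{\Lambda}\leq\norm{\Lambda_1}+\norm{\Lambda_2}\leq L_{h_1}+\beta_s^{-1}d_s$ with the estimates above yields the factor $L_{\nabla p}\left(L_{h_1}+\beta_s^{-1}d_s\right)\norm{x-y}$, and assembling the three pieces gives the stated inequality.
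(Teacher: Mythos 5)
Your proposal is correct and follows essentially the same route as the paper: the same three-term decomposition of $\nabla\phi_s(x)-\nabla\phi_s(y)$, the same use of~\eqref{a:nablahLip} together with $\|p(x)-p(y)\|\leq M_{\nabla p}\|x-y\|$ for the increment term, and the same bound $\|\Lambda(p(x);\lambda^s,\beta_s)\|\leq L_{h_1}+\beta_s^{-1}\dist(p_2(x)+\beta_s\lambda_2^s,\cK)\leq L_{h_1}+\beta_s^{-1}d_s$, which the paper isolates as a separate auxiliary lemma (inequality~\eqref{a:ewssss}) but proves exactly as you do, via $\|\Lambda_1\|\leq L_{h_1}$ from the Lipschitz property of $h_1$ and the projection formula $\Lambda_2=\beta_s^{-1}\left(y-P_{\cK}(y)\right)$ for the indicator block.
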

	Further, 
	let Assumption~\ref{ass:add1} hold.
	Then Lemma~\ref{l:ersdgg} implies
	$$
	D_{\phi_s}(y;x)\leq \frac{\left( L+ L_{\nabla p} \left(L_{h_1}+ \beta_s^{-1}d_s\right)+ M^2_{\nabla p}\beta_s^{-1}\right)}{2}\|x-y\|^2,\enspace \forall x,y \in\dom(g).
	$$
	Together with~\eqref{a:dferefff}, we know from Section~\ref{sec:APG} that   in this case APG~\cite{nesterov1983method,beck2009fista,tseng2008accelerated} can be used as an inner solver with
	$$
	K_s\leq 2\sqrt{\frac{2\left( L+ L_{\nabla p} \left(L_{h_1}+ \beta_s^{-1}d_s\right)+ M^2_{\nabla p}\beta_s^{-1}\right)}{\mu_g+\beta_s}}+1.
	$$
	\begin{coro}\label{coro:ggt}
		Consider problem~\eqref{prime} under Assumption~\ref{ass:handp},~\ref{ass:add1} and~\ref{ass:ewggg}.  Let us apply Algorithm~\ref{ipALM_m} with  APG~\cite{nesterov1983method,beck2009fista,tseng2008accelerated} as inner solver $\cA$. Then to obtain an $\epsilon$-solution in the sense of~\eqref{eq:eps},  the expected number of APG iterations is bounded by
		$$\left\{ \begin{array}{ll}
		\tilde O\left(\frac{\sqrt{ L\beta_0+ L_{\nabla p} \left(L_{h_1}\beta_0+ d_s\right)+ M^2_{\nabla p}}+ \sqrt{\mu_g}}{{\sqrt{\mu_g\epsilon}}} \right) & \mathrm{if} ~ \mu_g>0
		\\
		\tilde  O\left(\frac{\sqrt{ L\beta_0+ L_{\nabla p} \left(L_{h_1}\beta_0+ d_s\right)+ M^2_{\nabla p}}}{{\epsilon}} \right) &\mathrm{if} ~\mu_g=0
		\end{array}\right.
		$$
	\end{coro}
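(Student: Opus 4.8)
The plan is to apply Corollary~\ref{coro:main} directly, since all of the problem-specific work has already been carried out before the statement. Lemma~\ref{l:ersdgg} supplies the Lipschitz constant of $\nabla \phi_s$, and combined with Assumption~\ref{ass:add1} and the strong-convexity lower bound~\eqref{a:dferefff} this yields the explicit inner-solver constant
$$
K_s \leq 2\sqrt{\frac{2\left(L + L_{\nabla p}\left(L_{h_1} + \beta_s^{-1}d_s\right) + M^2_{\nabla p}\beta_s^{-1}\right)}{\mu_g + \beta_s}} + 1.
$$
The remaining task is purely to rewrite this as $K_s \leq \omega \beta_s^{-\ell} + \varsigma$ as required by~\eqref{a:Ksbound}, read off the exponent $\ell$, and invoke Corollary~\ref{coro:main}, which then gives the total complexity $\tilde O((\omega + \varsigma)/\epsilon^{\ell})$.

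First I would treat the strongly convex case $\mu_g > 0$. Here I lower-bound the denominator by $\mu_g + \beta_s \geq \mu_g$ and split the numerator into the $O(1)$ part $L + L_{\nabla p}L_{h_1}$ and the $\beta_s^{-1}$ part $L_{\nabla p}d_s + M^2_{\nabla p}$. Applying $\sqrt{a+b} \leq \sqrt{a} + \sqrt{b}$ separates $K_s$ into a $\beta_s^{-1/2}$ term and a constant, so $\ell = 1/2$ with $\omega$ proportional to $\sqrt{(L_{\nabla p}d_s + M^2_{\nabla p})/\mu_g}$ and $\varsigma$ proportional to $\sqrt{(L + L_{\nabla p}L_{h_1})/\mu_g} + 1$. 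Recombining the two square roots via $\sqrt{a} + \sqrt{b} \leq \sqrt{2}\sqrt{a+b}$, Corollary~\ref{coro:main} delivers $\tilde O((\omega + \varsigma)/\sqrt{\epsilon})$, which after absorbing the inner-solver-independent factor $\beta_0$ into $\tilde O$ matches the claimed $\tilde O((\sqrt{L\beta_0 + L_{\nabla p}(L_{h_1}\beta_0 + d_s) + M^2_{\nabla p}} + \sqrt{\mu_g})/\sqrt{\mu_g\epsilon})$.

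For the non-strongly convex case $\mu_g = 0$ the denominator collapses to $\beta_s$, so inside the square root the leading contribution is the $\beta_s^{-2}$ term $(L_{\nabla p}d_s + M^2_{\nabla p})/\beta_s^2$ and the subleading one is the $\beta_s^{-1}$ term $(L + L_{\nabla p}L_{h_1})/\beta_s$. The same splitting produces a $\beta_s^{-1}$ term and a $\beta_s^{-1/2}$ term; using $\beta_s \leq \beta_0$ I would dominate the latter by the former at the cost of a $\sqrt{\beta_0}$ factor, collapsing everything to $\ell = 1$ with $\omega \propto \sqrt{L_{\nabla p}d_s + M^2_{\nabla p} + \beta_0(L + L_{\nabla p}L_{h_1})}$ and $\varsigma = 1$. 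Corollary~\ref{coro:main} then yields $\tilde O(\omega/\epsilon)$, which is exactly the stated bound.

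The only genuine subtlety, and the step I would state carefully rather than the routine matching of powers of $\beta_s$, is that $d_s$ depends on the outer index $s$ through $\beta_s\|\lambda_2^s\|$ and so is not literally a constant as~\eqref{a:Ksbound} nominally demands. The resolution is that $\lambda^s$ is $\cF_{s-1}$-measurable, so under Assumption~\ref{ass:innersolver} both $K_s$ and $d_s$ are legitimately allowed to depend on it; moreover, by Corollary~\ref{coro:bound} the quantity $\beta_s\|\lambda_2^s\|$ is uniformly controlled, whence $\sup_s d_s$ is finite and can be folded into the inner-solver-independent constants hidden by $\tilde O$, allowing Corollary~\ref{coro:main} to be applied with $\omega, \varsigma$ independent of $s$.
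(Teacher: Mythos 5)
Your proposal is correct and takes essentially the same route as the paper: the paper's own (omitted) proof, declared analogous to that of Corollary~\ref{coro:lAPG}, likewise takes the displayed bound on $K_s$, uses $\beta_s\le\beta_0$ to cast it in the form~\eqref{a:Ksbound} with $\ell=1/2$ when $\mu_g>0$ and $\ell=1$ when $\mu_g=0$, and then invokes Corollary~\ref{coro:main}; your split-and-recombine of the square root is just a slightly longer path to the same constants. Your closing remark on the $s$-dependence (and randomness) of $d_s$ flags a point the paper itself glosses over --- it even leaves $d_s$ inside the final bound --- so that is a welcome extra rather than a deviation, though note that Corollary~\ref{coro:bound} controls $\beta_s\|\lambda_2^s\|$ only in expectation, so a fully rigorous uniform bound on $d_s$ would still need an additional argument.
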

	\begin{remark}
		Corollary~\ref{coro:ggt} recovers Corollary~\ref{coro:lAPG} as a special case with $L_{\nabla p}=0$ and $M_{\nabla p}=\|A\|$. 
	\end{remark}
	Similarly,  we could consider the large-scale structured problem as~\eqref{prime:special1} but with nonlinear composite terms, or the relatively smooth assumption as in Section~\ref{sec:LBPG} instead of Assumption~\ref{ass:add1},.  The same order of iteration complexity bound as Corollary~\ref{coro:LL} and~\ref{coro:BPG} can be derived for the nonlinear composite case under Assumption~\ref{ass:ewggg}.
	\section{Further Discussion}\label{sec::dis}
	\subsection{Efficient Inner Problem Stopping Criteria}
	In Algorithm $\ref{ipALM_m}$, 
	we provide an upper bound on the number of inner iterations $m_s$ needed in order to obtain an solution $x^s$
	such that
	$$
	\bE[H_{s}(x^s)-H_s^\star]\leq \epsilon_s.
	$$
	In some cases,  it is possible to have a computable upper bound $U_s(x^s)$ such that $U_s(x^s)\geq H_{s}(x^s)-H_s^\star$. Then we can check the value of  $U_s(x^s)$ and stop the inner solve either when $U_s(x^s)\leq \epsilon_s$ or when the number of inner iterations exceeds $m_s$. Note that the solution $x^s$ obtained in this way satisfies
	$\bE[H_{s}(x^s)-H_s^\star]\leq 2\epsilon_s$, which is equivalent to a change from $\epsilon_0$ to $2\epsilon_0$ in the previous analysis and hence all the previous complexity bounds apply.   
	In particular, in the case of structured problem~\eqref{prime:special1}, the inner problem takes the following form
	\begin{equation}
	\label{prime:specialerm44}
	\min_{x\in \mathbb{R}^n}    \left[\digamma(x)\equiv \Psi(x)+ \sum_{j={1}}^{m} \Phi_j (B_j x)\right],
	\end{equation}
	to which we can associate the following dual problem:
	\begin{equation}
	\label{prime:fual44}
	\max_{y\in \mathbb{R}^m}    \left[ \mathcal{D}(y)\equiv -\Psi^*(-B^\top y)- \sum_{j={1}}^{m} \Phi^*_j (y_j)\right],
	\end{equation}
	where $B=:\begin{pmatrix}
	B_1 \\ \vdots\\ B_m
	\end{pmatrix}$.
	In this case a computable upper bound is given by $\digamma(x^s)-\mathcal{D}(y^s)$
	where $y^s$ is a dual feasible solution constructed from $x^s$.

	\subsection{KKT Solution}\label{sec:kkt}
	The convergence of ALM can also be measured through the KKT residual. 
	Recall that a solution is said to be an $\epsilon$-KKT solution if there exists $(u,v)\in \partial L(x,\lambda)$ such that $\|u\|\leq \epsilon$ and $\|v\|\leq \epsilon$, see e.g.~\cite{lu18}. Due to the possible randomness of the iterates in our algorithm, we shall measure the expected distance of the partial gradient of the Lagrangian to 0.  
	\begin{algorithm}[H]
		\caption{IPALM\_KKT($ \cA$)}
		\label{ipALM_m_kkt}
		\begin{algorithmic}[1]
			\Require   $ \beta_0>0$, $\rho\in(1/2,1)$, $\eta\in (0,\rho^3]$, $m_0\in\bN_{++}$, $\{L_s\}_{s\geq 0}$ satisfying~\eqref{a:etfggtt}
			\Ensure $x^{-1}\in \dom(g)$, $\lambda^0\in \dom(h^*)$ 
			\State $\tilde x^{0}\leftarrow  \cA( x^{-1}, m_0, H_0 )$
			\State $\epsilon^0\geq H_0(x^0)-H_0^*$ 
			\For {$s=0,1, 2, \ldots$}
			\State $ x^{s}\leftarrow\arg\min_{y\in \R^n} \left\{\< \nabla \phi_s (\tilde x^{s}), y-\tilde x^{s}>+\frac{L_{s}}{2} \|y-\tilde x^{s}\|^2+\frac{\beta_s}{2}\|y-x^{s-1}\|^2+ g(y) \right\}$
			\State $\lambda^{s+ 1}\leftarrow \Lambda(p(x^{s});\lambda^s,\beta_s)$
			\State $\beta_{s+1}=\rho\beta_s$
			\State $\epsilon_{s+1}=\eta\epsilon_s$
			\State choose $m_{s+1}$ to be the smallest integer satisfying~\eqref{eq:ms2}
			\State $\tilde x^{s+1}\leftarrow  \cA( x^{s}, m_{s+1}, H_{s+1} )$ 
			%\State $\tilde \lambda^{s+1}:=\Lambda(p(\tilde x);\lambda^s,\beta_s)$
			\EndFor
		\end{algorithmic}
	\end{algorithm}
	
	For simplicity we restrict the discussion for the case when for any outer iteration $s$ there is a constant $L_s>0$ such that
	\begin{align}\label{a:etfggtt}
		D_{\phi_s}(y;x)\leq \frac{L_s}{2}\|x-y\|^2,\enspace \forall x,y\in \dom(g).
	\end{align}
	We modify slightly Algorithm~\ref{ipALM_m} by adding one additional proximal gradient step (Line 5 in Algorithm~\ref{ipALM_m_kkt}) into each outer iteration. In addition, in Algorithm~\ref{ipALM_m_kkt} we require $\eta$ to be smaller than $\rho^3$.      	Since the proximal gradient step is guaranteed to decrease the objective value, we have
	\begin{align}\label{a:rttddd}
		\bE[H_s(x^s)-H_s^\star]\leq \bE[H_s(\tilde x^s)-H_s^\star]\leq \epsilon_s,\enspace \forall s\geq 0.
	\end{align}
	Hence Algorithm~\ref{ipALM_m_kkt} falls into the class of Algorithm~\ref{ipALM} and all the results in Section~\ref{sec:prox_ALM} can be applied. Moreover,
	in analogue to Theorem~\ref{rock2}, we have the following  bounds for the KKT residual.
	\begin{theorem}\label{thm:kkt2} Consider Algorithm~\ref{ipALM_m_kkt}.
		For any $s\geq 0$ we have
		\begin{align}\label{a:kkt1}
			&	\dist(0,\partial_x L(x^s, \lambda^{s+1})) \leq  \sqrt{16 L_{s}\left(H_s(\tilde x^s)-H_s^\star\right)+2\beta_s^2\| x^s-x^{s-1}\|^2}\enspace,\\ \label{a:kkt2}
			&\dist(0,\partial_{\lambda} L(x^s,  \lambda^{s+1})) \leq  \beta_s \| \lambda^{s+1}-\lambda^s\|.
		\end{align}
	\end{theorem}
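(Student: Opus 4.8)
The plan is to derive both KKT residual bounds from the optimality conditions of the proximal gradient step (Line 5 of Algorithm~\ref{ipALM_m_kkt}) and the dual update, using the smooth approximation machinery from Lemma~\ref{l:erdfgt}. The second bound~\eqref{a:kkt2} should come essentially for free: since $\lambda^{s+1}=\Lambda(p(x^s);\lambda^s,\beta_s)$, the optimality condition~\eqref{a:optimalitycondition} gives $p(x^s)-\beta_s(\lambda^{s+1}-\lambda^s)\in \partial h^*(\lambda^{s+1})$. Recalling that $\partial_\lambda L(x,\lambda)=p(x)-\partial h^*(\lambda)$, this means $\beta_s(\lambda^{s+1}-\lambda^s)\in \partial_\lambda L(x^s,\lambda^{s+1})$, so $\dist(0,\partial_\lambda L(x^s,\lambda^{s+1}))\le \beta_s\|\lambda^{s+1}-\lambda^s\|$, which is exactly~\eqref{a:kkt2}.

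\textbf{For the harder bound~\eqref{a:kkt1},} first I would write down the first-order optimality condition for the proximal step defining $x^s$. Since $x^s$ minimizes a strongly convex function, there exists a subgradient $\zeta\in\partial g(x^s)$ such that
$$
\nabla\phi_s(\tilde x^s)+L_s(x^s-\tilde x^s)+\beta_s(x^s-x^{s-1})+\zeta=0.
$$
On the other hand, $\partial_x L(x^s,\lambda^{s+1})=\nabla f(x^s)+\nabla p(x^s)\lambda^{s+1}+\partial g(x^s)$, and by Lemma~\ref{l:rterrr} together with $\lambda^{s+1}=\Lambda(p(x^s);\lambda^s,\beta_s)$ we have $\nabla\phi_s(x^s)=\nabla f(x^s)+\nabla p(x^s)\lambda^{s+1}$. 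Hence $\nabla\phi_s(x^s)+\zeta\in\partial_x L(x^s,\lambda^{s+1})$. Substituting $\zeta$ from the optimality condition, the candidate element of $\partial_x L(x^s,\lambda^{s+1})$ becomes
$$
\nabla\phi_s(x^s)-\nabla\phi_s(\tilde x^s)-L_s(x^s-\tilde x^s)-\beta_s(x^s-x^{s-1}).
$$
The plan is then to bound its norm: the term $\beta_s(x^s-x^{s-1})$ contributes the second summand under the square root in~\eqref{a:kkt1}, while the first three terms combine to give the $16L_s(H_s(\tilde x^s)-H_s^\star)$ contribution.

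\textbf{The main obstacle} will be controlling $\|\nabla\phi_s(x^s)-\nabla\phi_s(\tilde x^s)-L_s(x^s-\tilde x^s)\|$ in terms of the inner optimality gap $H_s(\tilde x^s)-H_s^\star$. Here I would exploit the relative smoothness encoded in~\eqref{a:etfggtt}, which gives $\|\nabla\phi_s(x^s)-\nabla\phi_s(\tilde x^s)\|\le L_s\|x^s-\tilde x^s\|$, so the whole expression is bounded by $2L_s\|x^s-\tilde x^s\|$. It then remains to control the distance $\|x^s-\tilde x^s\|$ between the proximal-gradient iterate and its input by the inner optimality gap at $\tilde x^s$. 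This is where the standard proximal-gradient descent lemma under~\eqref{a:etfggtt} enters: the guaranteed one-step decrease $H_s(\tilde x^s)-H_s(x^s)\ge \tfrac{L_s}{2}\|x^s-\tilde x^s\|^2$ (together with $H_s(x^s)\ge H_s^\star$) yields $\|x^s-\tilde x^s\|^2\le \tfrac{2}{L_s}(H_s(\tilde x^s)-H_s^\star)$, so $4L_s^2\|x^s-\tilde x^s\|^2\le 8L_s(H_s(\tilde x^s)-H_s^\star)$. Combining with the $\beta_s(x^s-x^{s-1})$ term via $\|a+b\|^2\le 2\|a\|^2+2\|b\|^2$ produces the stated factor $16L_s$ and the $2\beta_s^2\|x^s-x^{s-1}\|^2$ term. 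I expect the delicate bookkeeping to be matching the exact numerical constants, but the structural steps—optimality condition, relative-smoothness gradient bound, and proximal descent lemma—are the essential ingredients.
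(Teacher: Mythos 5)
Your proposal is correct and follows essentially the same route as the paper's proof: the same optimality conditions for the proximal step and the dual update, the same Lipschitz-gradient bound from~\eqref{a:etfggtt}, and the same squaring via $\|a+b\|^2\le 2\|a\|^2+2\|b\|^2$ to get the constants $16L_s$ and $2\beta_s^2$. The only cosmetic difference is that you derive the bound $\|x^s-\tilde x^s\|^2\le \tfrac{2}{L_s}\left(H_s(\tilde x^s)-H_s^\star\right)$ from the proximal-gradient descent lemma explicitly, whereas the paper cites it as a known property of the proximal gradient step.
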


	\begin{coro}\label{coro:kkt}
		Consider Algorithm~\ref{ipALM_m_kkt}.
		Assume that there is $\gamma>0$ such that $L_s\leq \gamma \beta_s^{-1}$. Then to obtain a solution such that
		\begin{align}\label{a:tdff}
			\bE\left[\dist(0,\partial_x L(x^s, \lambda^{s+1})) \right]\leq \epsilon,\enspace \bE\left[\dist(0,\partial_{\lambda}L(x^s, \lambda^{s+1})) \right]\leq \epsilon
		\end{align}
		it suffices to run Algorithm~\ref{ipALM_m_kkt} for 
		\begin{align}\label{a:kktsbound}
			s\geq \frac{\ln(c_4/\epsilon)}{\ln( 1/\rho)}
		\end{align}
		number of outer iterations where 
		$$
		c_4:=\max\left(\sqrt{16\gamma\epsilon_0/\beta_0+8c_0\beta_0}, \beta_0\sqrt{c_0}\right).
		$$
	\end{coro}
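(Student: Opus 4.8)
The plan is to combine the two one-step residual bounds of Theorem~\ref{thm:kkt2} with the uniform moment bounds of Corollary~\ref{coro:bound}, and then pick $s$ large enough that the resulting geometrically decaying quantities fall below $\epsilon$. First I would note that Algorithm~\ref{ipALM_m_kkt} belongs to the class of Algorithm~\ref{ipALM} (see the discussion around~\eqref{a:rttddd}) and is run with $\beta_s=\beta_0\rho^s$, $\epsilon_s=\epsilon_0\eta^s$ where $\eta\leq\rho^3<\rho$; hence Corollary~\ref{coro:bound} applies and gives, for every $s$, both $\bE[\|\lambda^{s+1}-\lambda^s\|]\leq\sqrt{c_0}$ and $\bE[\|x^s-x^\star\|^2]\leq c_0$. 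I would also record from~\eqref{a:rttddd} that the inner stopping rule yields $\bE[H_s(\tilde x^s)-H_s^\star]\leq\epsilon_s$.

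The dual residual is immediate. Taking expectations in~\eqref{a:kkt2} and invoking Corollary~\ref{coro:bound},
$$\bE[\dist(0,\partial_{\lambda}L(x^s,\lambda^{s+1}))]\leq\beta_s\,\bE[\|\lambda^{s+1}-\lambda^s\|]\leq\beta_0\sqrt{c_0}\,\rho^s,$$
which is at most $\epsilon$ as soon as $s\geq\ln(\beta_0\sqrt{c_0}/\epsilon)/\ln(1/\rho)$. This accounts for the second argument of the maximum defining $c_4$.

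The main work is the primal residual~\eqref{a:kkt1}, whose right-hand side is a square root. The key step is to pass the expectation inside by Jensen's inequality (concavity of $\sqrt{\cdot}$) and then bound the expected radicand term by term: using $L_s\leq\gamma\beta_s^{-1}$, the stopping bound $\bE[H_s(\tilde x^s)-H_s^\star]\leq\epsilon_s$, and $\bE[\|x^s-x^{s-1}\|^2]\leq 2\bE[\|x^s-x^\star\|^2]+2\bE[\|x^{s-1}-x^\star\|^2]\leq 4c_0$ from Corollary~\ref{coro:bound}. The leading term then decays like $\gamma\beta_s^{-1}\epsilon_s\propto(\eta/\rho)^s$ while the proximal-gradient term decays like $\beta_s^2\propto\rho^{2s}$. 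The crucial observation — and the reason the algorithm imposes $\eta\leq\rho^3$ rather than merely $\eta<\rho$ — is that $\eta\leq\rho^3$ forces $(\eta/\rho)^s\leq\rho^{2s}$, so both contributions decay at the common rate $\rho^{2s}$. Bounding the whole radicand by a constant multiple of $\rho^{2s}$ and factoring out $\rho^s$ produces $\bE[\dist(0,\partial_x L(x^s,\lambda^{s+1}))]\leq C\,\rho^s$ with $C$ equal to the first entry of the maximum in $c_4$; this is $\leq\epsilon$ precisely when $s$ exceeds the same logarithmic threshold.

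Finally I would take $c_4$ to be the maximum of the two constants, so that $s\geq\ln(c_4/\epsilon)/\ln(1/\rho)$ simultaneously drives both residuals below $\epsilon$, establishing~\eqref{a:tdff}. I expect the only genuine obstacle to be reconciling the two mismatched decay rates inside the primal square root — the $\gamma\beta_s^{-1}\epsilon_s$ term against the $\beta_s^2\|x^s-x^{s-1}\|^2$ term — which is exactly what the restriction $\eta\leq\rho^3$ is engineered to handle; everything else is a routine application of Jensen's inequality and the uniform bounds already established in Corollary~\ref{coro:bound}.
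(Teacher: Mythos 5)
Your proposal is correct and follows essentially the same route as the paper's proof: both take expectations in the two bounds of Theorem~\ref{thm:kkt2} (using Jensen's inequality for the square root), invoke the uniform moment bounds of Corollary~\ref{coro:bound} via~\eqref{a:rttddd}, and use $L_s\leq\gamma\beta_s^{-1}$ together with $\eta\leq\rho^3$ and $\beta_s=\beta_0\rho^s$, $\epsilon_s=\epsilon_0\eta^s$ to reduce both residuals to a constant times $\rho^s$, with $c_4$ the maximum of the two constants. The only difference is presentational: you spell out the Jensen step, the bound $\bE[\|x^s-x^{s-1}\|^2]\leq 4c_0$, and the role of $\eta\leq\rho^3$ in matching the decay rates, all of which the paper compresses into a single chain of inequalities.
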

	Note that the outer iteration bound~\eqref{a:kktsbound} for the KKT convergence~\eqref{a:tdff} only differs from the bound for the objective value convergence~\eqref{a:sbound} by a constant in the logarithm term.
	For each outer iteration, Algorithm~\ref{ipALM_m_kkt} has one more proximal gradient step to execute than Algorithm~\ref{ipALM_m} and this will only add a term with logarithm dependence with respect to $\epsilon$ into the total complexity bound. In particular, we can derive $\tilde O(1/\epsilon)$ complexity bound to obtain $\epsilon$-KKT convergence in the sense of~\eqref{a:tdff}, and  $\tilde O(1/\sqrt{\epsilon})$ if the function $g$ is strongly convex. For brevity we omit the details which are highly similar to Section~\ref{sec:rterr}.

	\subsection{Bounded Primal and Dual Domain}\label{sec:bound}
	The bound  $\tilde O(1/\epsilon^\ell)$ can be improved to  $O(1/\epsilon^\ell)$ if both the primal and dual domain are bounded.  Indeed, we require $\eta<\rho$ in Algorithm~\ref{ipALM_m} to ensure the boundedness (in expectation) of the sequence $\{(x^s,\lambda^{s})\}$.  If the domain of $g$ is bounded and there is no constraint, i.e., $\cK= \R^{d_2}$, then $\{(x^s,\lambda^{s})\}$ of Algorithm~\ref{ipALM} is bounded for any choice of $\{\epsilon_s\}$ and $\{\beta_s\}$. In this case we can let $\eta=\rho$ and the bound in~\eqref{a:mainnbi} can be improved to
	$$
	\sum_{t=1}^s \bE[m_t]\leq s+c_2\sum_{t=1}^s K_t.
	$$
	Consequently, the bound in~\eqref{a:wrrrrr} can be improved to
	$$
	\sum_{t=0}^s\bE[m_t]\leq m_0+ \frac{c^\ell_1}{\epsilon^\ell \rho^\ell \ell\ln(1/\rho)}+ c_2\left( \frac{ \varsigma c^\ell_1}{\rho^\ell \ell\ln(1/\rho)}+ \frac{\omega c_1^\ell}{\beta_0^\ell(1-\rho^\ell)} \right)\frac{1}{\epsilon^\ell},
	$$
	and we get the $O(1/\epsilon^\ell)$ iteration complexity bound for an $\epsilon$-solution in the sense of~\eqref{eq:eps}. However, for the $\epsilon$-KKT solution in the sense of~\eqref{a:tdff} we still only have $\tilde O(1/\epsilon^\ell)$ iteration complexity bound.
	%\iffalse	
	\section{Numerical Experiments}\label{sec::num}
	We will test the performance of Algorithm~\ref{ipALM_m} with APPROX and L-Katyusha as inner solver, 
	which are referred to as IPALM-APPROX and IPALM-Katyusha. We mainly compare with first-order primal dual solvers
	ASGARG-DL \cite{tran18ada} and SMART-CD \cite{ala17}. 
	Note that  comparison with linearized ADMM~\cite{cham11} and ASGARD \cite{tran18} are not included as they were compared with ASGARG-DL in~\cite{tran18ada}.  
	Since all the three algorithms depends on the choice of $\beta_0$, we test $\beta_0\in\{10^{-2}, 10^{-1}, 1,10,100\}$ and choose the best result to compare. (Note that  the problem data used are all scaled so that the  row vectors all have norm 1.) We also run CVX  so as to obtain a good approximation of the optimal value $F^\star$, which is needed in the computation of the error term:
	\begin{align}\label{a:eraaa}
		\log_{10}\left|\frac{F(x)-F^\star}{F^\star}\right|.
	\end{align}
	However, due to the large-scale problem that we solve, CVX  may return inaccurate solution or even fail. 
	To solve the issue on unknown $F^\star$, note that either CVX or our algorithm can provide a lower bound $F_l$ and an upper bound $F_u$ so that $F^\star\in [F_{l}, F_{u}]$.  Define $$\epsilon_{c}:= (F_{u}- F_{l})/F_{l}$$
	as the confidence error level.  Then for any $x$ such that $F(x)=(1+ \epsilon)F_{u} $ for some $1> \epsilon> \epsilon_c$, we have
	$$\epsilon= \frac{F(x)- F_{u}}{F_{u}}\le \frac{F(x)- F^\star}{F^\star}\le \frac{F(x)- F_{l}}{F_{l}}= \epsilon_{c}+ (1+ \epsilon_{c})\epsilon< 3\epsilon.$$
	So we use $ \frac{F(x)- F_{u}}{F_{u}} $ as an approximation of $\frac{F(x)- F^\star}{F^\star}$ for those $x$ such that $F(x)> (1+ \epsilon_c)F_{u}$.

	\subsection{Least Absolute Deviation }
	The first problem we solve is of the form:
	\begin{align*}
		\min_{x\in\mathbb{R}^n} \norm{Ax- b}_1+ \lambda \norm{x}_1
	\end{align*}
	which is also know as Least Absolute Deviation (LAD) problem~\cite{wang2007robust}. We use training data of three different datasets from libsvm \cite{chang2011libsvm} as $A$ and modify $b$ such that $Ax= b$ has a sparse solution. We set $\lambda= 0.01$. The details about the datasets are given in Table $\ref{tab_1}$. 
	%To fit these three algorithms, we set $f(x)= 0, g(x)= \lambda \norm{x}_1, h(x)= \norm{x- b}_1, p(x)= Ax$. %We test different initial $\beta_0$ from $\{10^{-3},10^{-2},10^{-1},1,10,10^2\}$ for  ASGARG-DL, SMART-CD and IPALM-APPROX and choose the best result. 
	The result is shown in Figure $\ref{fig_1}$. We also compare the time of CVX with IPALM-APPROX to get a mid-level accurate solution in Table $\ref{tab_2}$.
	
	As we can see form Figure $\ref{fig_1}$, IPALM-APPROX has the best performance after accuracy $10^{-3}$. ASGARD-DL works with full dimensional variables and therefore has slow convergence in time. SMART-CD has similar performance as IPALM-APPROX but tends to be slower for obtaining more accurate solution. From Table $\ref{tab_2}$ we can see to get a mid-level accurate solution, IPALM-APPROX significantly outperforms CVX for these three datasets.	\begin{table}[H]
		\centering
		\begin{tabular}{l|r|r}
			\hline
			Dataset & Training size ($m$) & Number of features ($n$) \\
			\hline
			news20scale & 15,935 & 62,061 \\
			rcv1 & 20,242 & 47,236 \\
			rcv1mc & 15,564 & 47,236\\
			\hline
		\end{tabular}
		\caption{Datasets from libsvm}
		\label{tab_1}
	\end{table}
	
	\begin{figure}
		\centering
		\begin{subfigure}[t]{0.32\linewidth}
			\centering
			\includegraphics[scale=0.5]{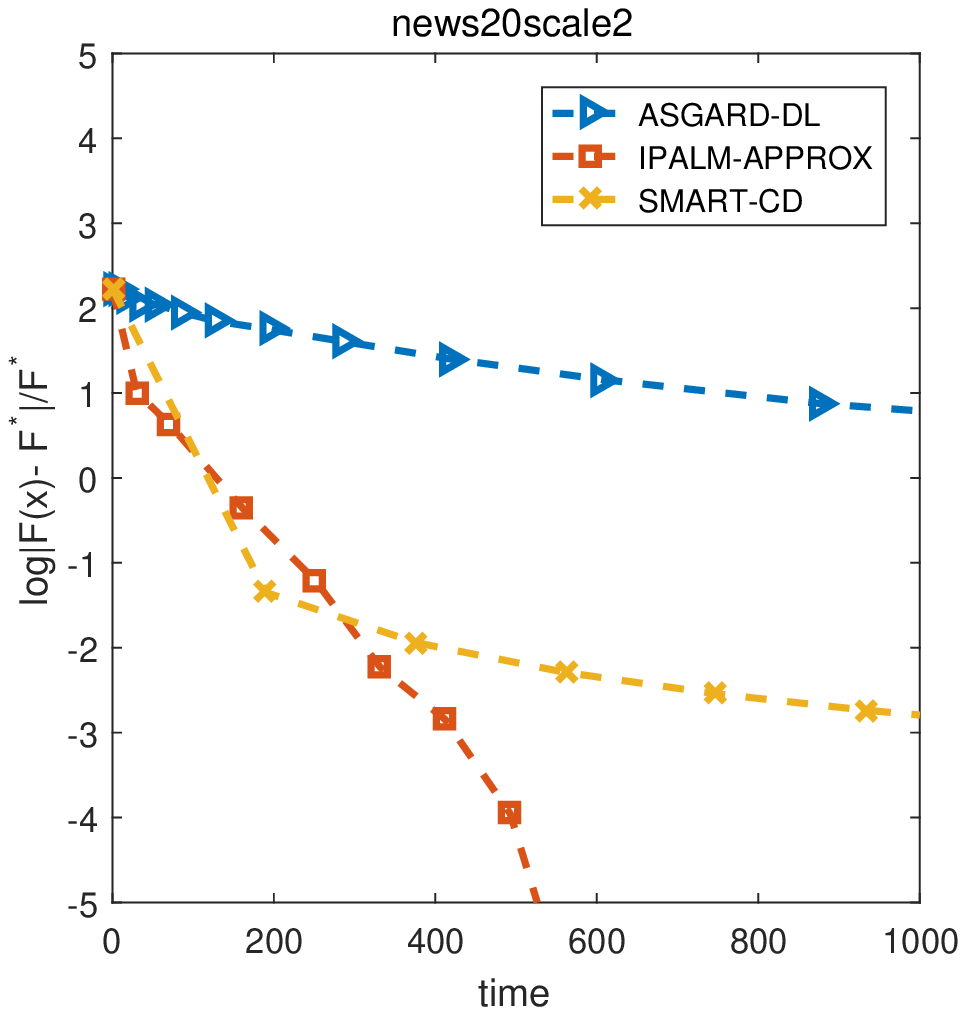}
			\caption{news20scale}
		\end{subfigure}
		\begin{subfigure}[t]{0.32\linewidth}
			\centering
			\includegraphics[scale=0.5]{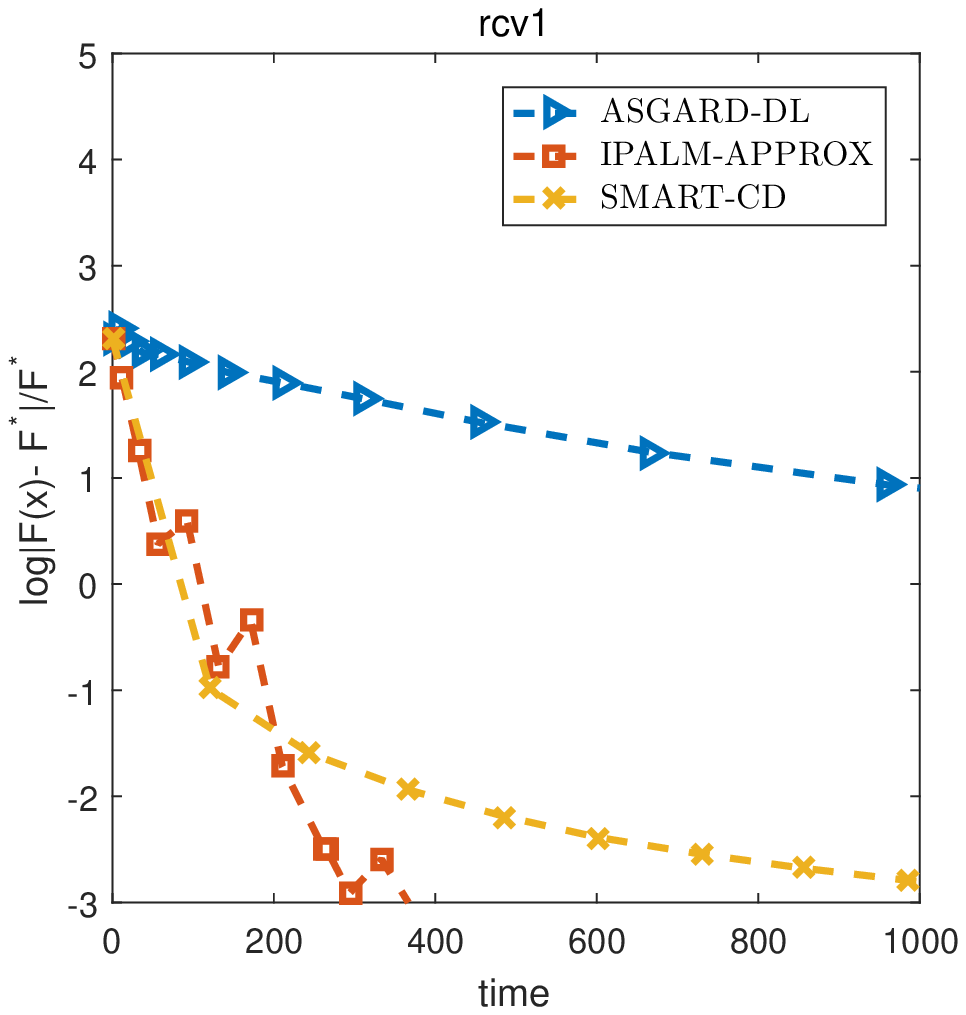}
			\caption{rcv1}
		\end{subfigure}
		\begin{subfigure}[t]{0.32\linewidth}
			\centering
			\includegraphics[scale=0.5]{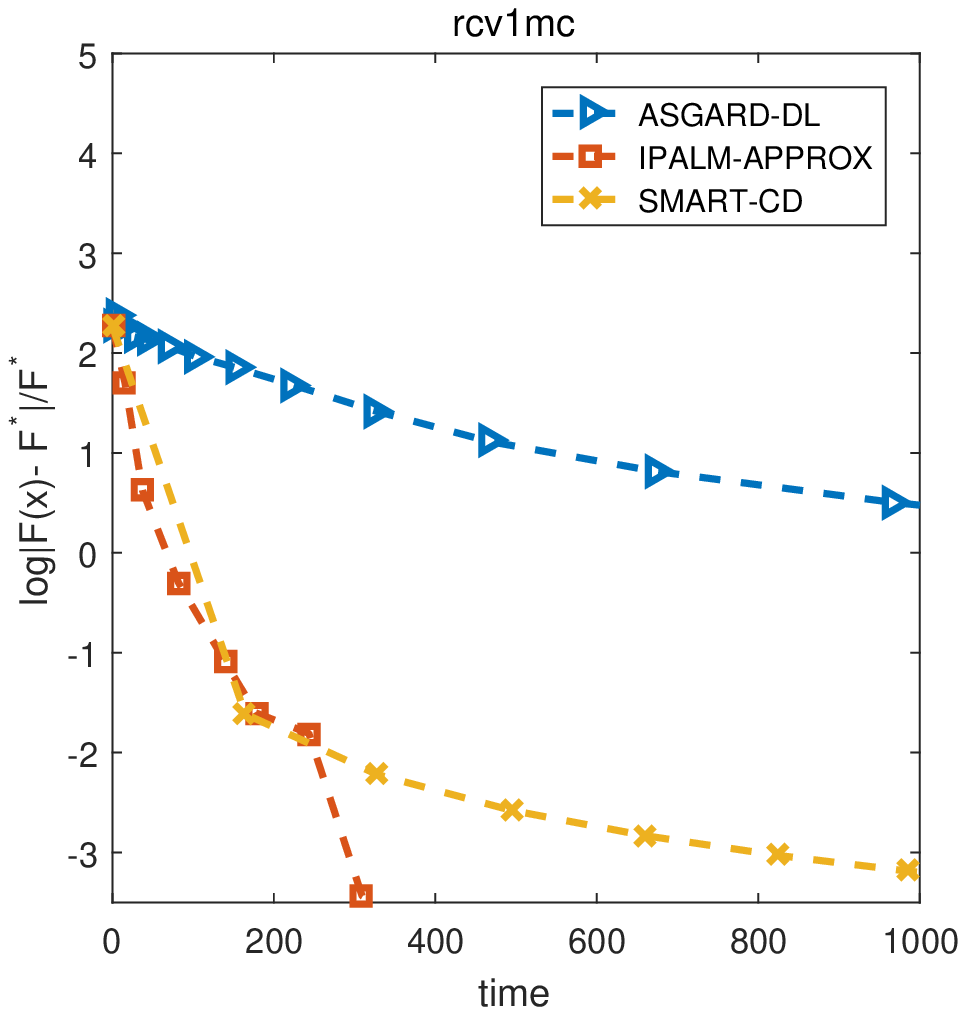}
			\caption{rcv1mc}
		\end{subfigure}
		\caption{Comparison of three algorithms for LAD problem on three datasets. The $x$-axis is time and $y$-axis is $\log((F(x)- F^\star)/F^\star)$. Here we use the result of CVX as an approximation of  $F^\star$.}
		\label{fig_1}
	\end{figure}
	
	\begin{table}[H]
		\centering
		\begin{tabular}{l|l|r|r}
			\hline
			Dataset & accuracy~\eqref{a:eraaa} & CVX time & IPALM-APPROX time \\
			\hline
			news20scale & $10^{-5}$ & $\sim$4200s & $\sim 500$s \\
			rcv1 & $10^{-3}$ & $\sim$4000s & $\sim$400s \\
			rcv1mc & $10^{-3}$ & $\sim$1800s & $\sim$300s \\
			\hline
		\end{tabular}
		\caption{Running time of CVX and IPALM-APPROX for Least absolute deviation problem on three datasets.}
		\label{tab_2}
	\end{table}
	
	\subsection{Basis Pursuit}
	The second problem we solve is of the form:
	\begin{align*}
		\min_{x\in\mathbb{R}^n} \quad&\norm{x}_1 \\
		s.t. \quad& Ax= b
	\end{align*}
	which is known as basis pursuit problem~\cite{ChenDonohoSaubders}.
	The datasets used are shown in Table $\ref{tab_1}$ and we modify $b$ for each dataset to make sure that the problem is feasible.
	%To fit these three algorithms, we set $f(x)= 0, g(x)= \norm{x}_1, h(x)= \delta_{b}(x), p(x)= Ax$. 
	The results are shown in Figure~\ref{fig_2} for the objective value gap and in  Figure~\ref{fig_2_inf} for the infeasibility gap. We also compare the time of CVX with IPALM-APPROX to get a mid-level accurate solution in Table $\ref{tab_3}$.
	
	As we can see from Figure~\ref{fig_2} and \ref{fig_2_inf}, IPALM-APPROX works well both in objective value and feasibility. Since SMART-CD reduces $\beta$ much faster than IPALM-APPROX, it has fast convergence at the beginning, but small $\beta$ leading to small stepsize and slow convergence in objective value for high accuracy. From Table~\ref{tab_3}, we see the difference between IPALM-APPROX and CVX if only medium accuracy is required.	\begin{figure}[!h]
		\centering
		\begin{subfigure}[t]{0.3\linewidth}
			\centering
			\includegraphics[scale=0.5]{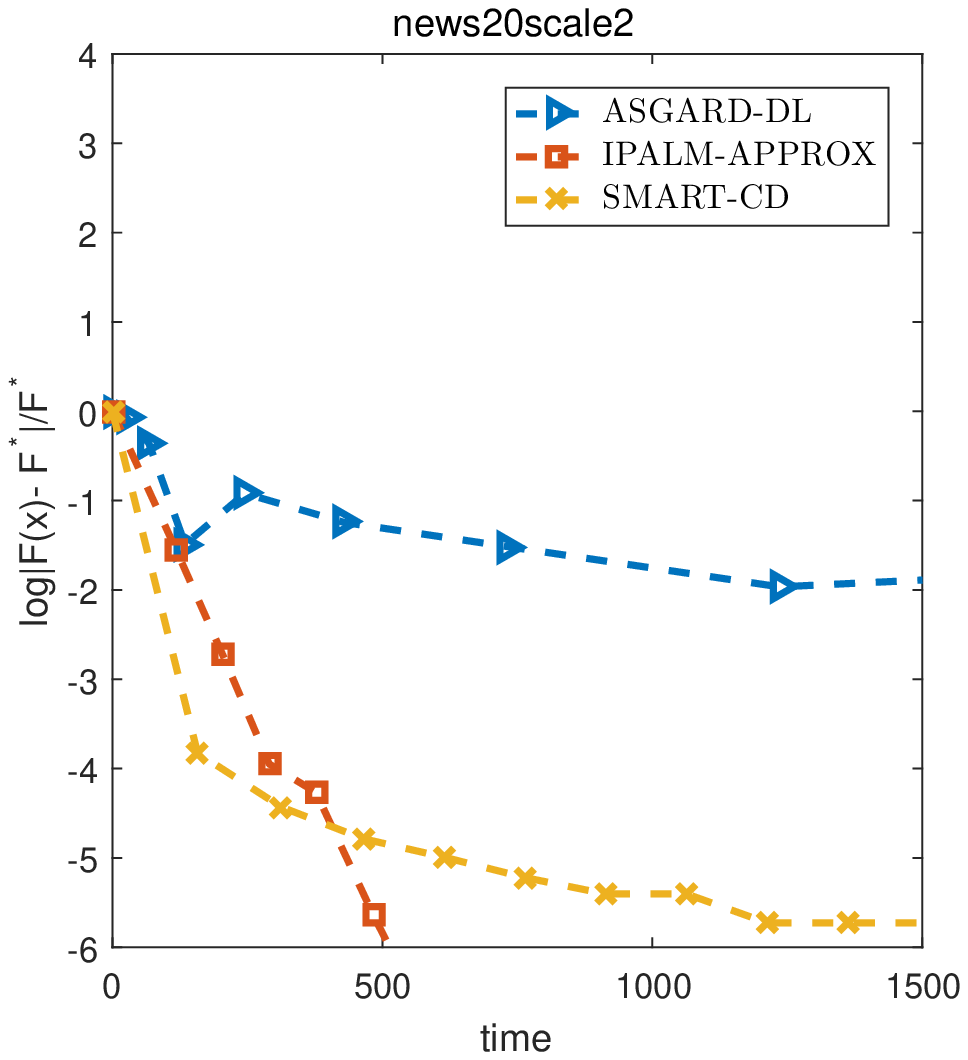}
			\caption{news20scale}
		\end{subfigure}
		\begin{subfigure}[t]{0.3\linewidth}
			\centering
			\includegraphics[scale=0.5]{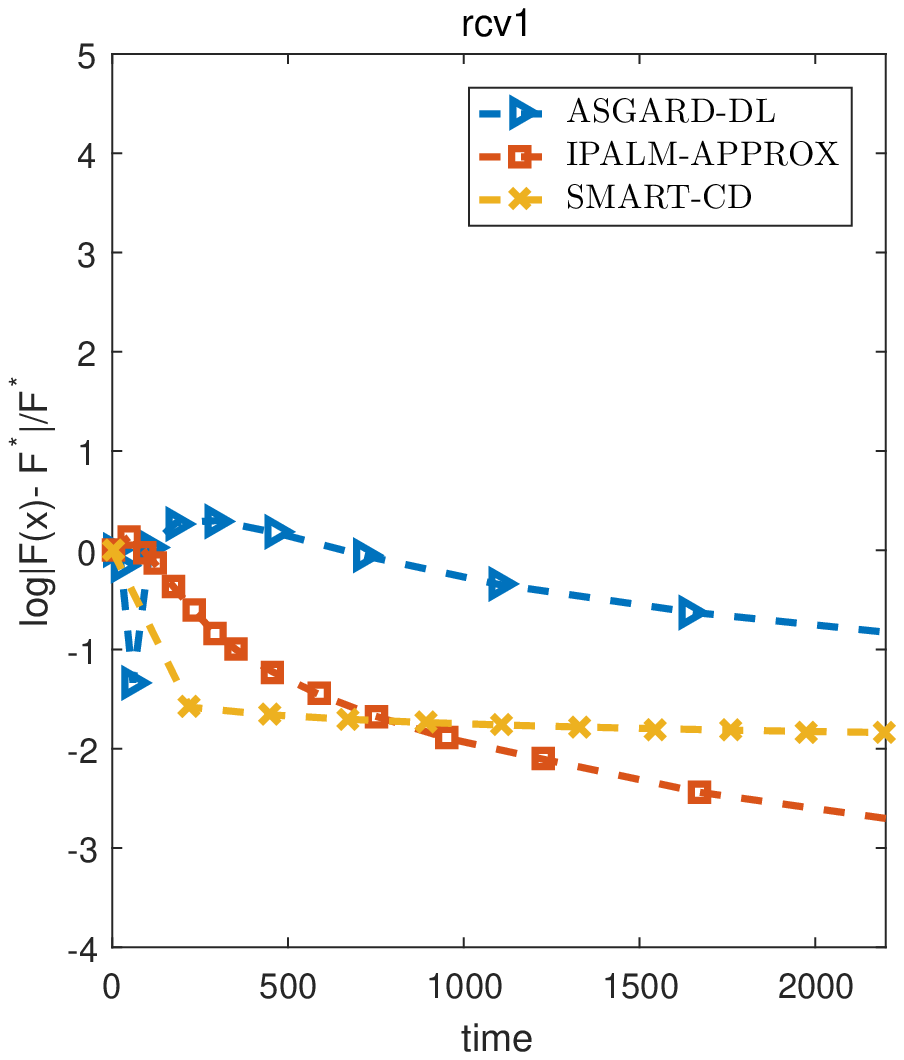}
			\caption{rcv1}
		\end{subfigure}
		\begin{subfigure}[t]{0.3\linewidth}
			\centering
			\includegraphics[scale=0.5]{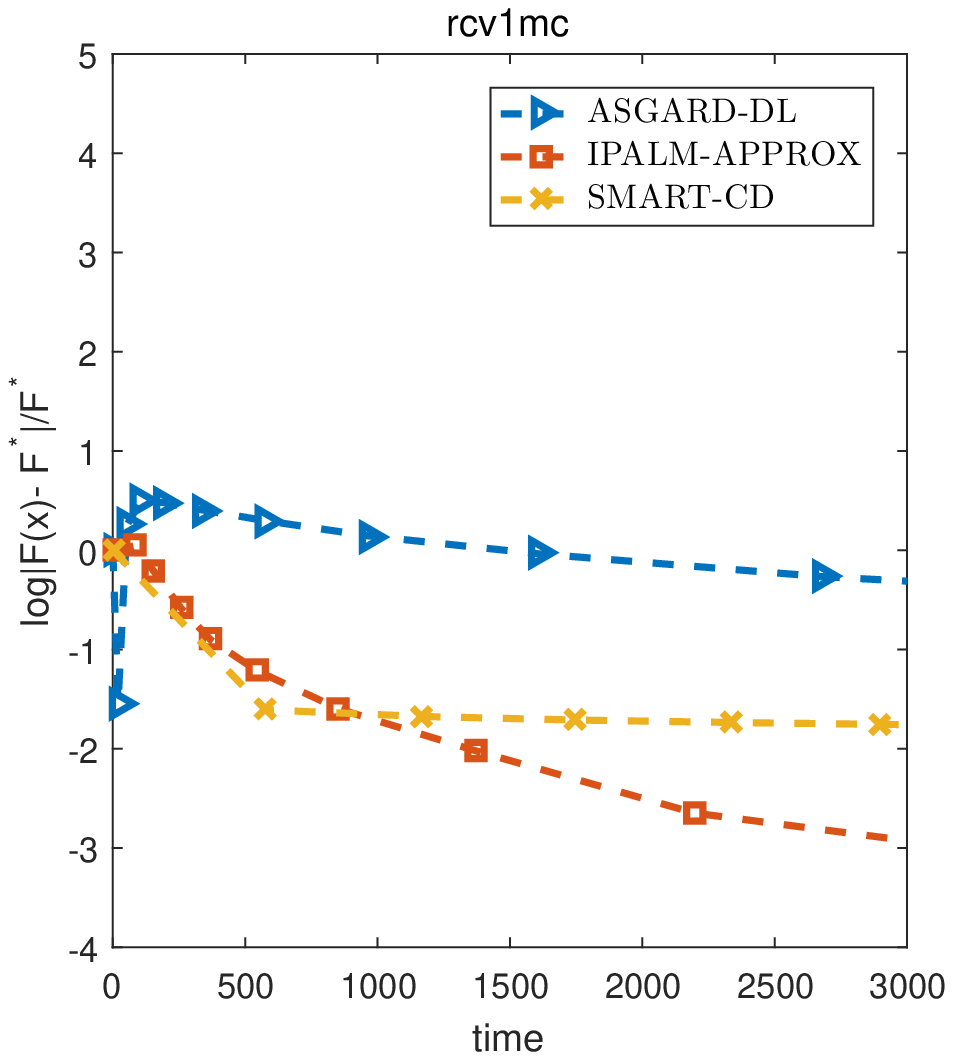}
			\caption{rcv1mc}
		\end{subfigure}
		\caption{Comparison of three algorithms for basis pursuit problem on three datasets. The $x$-axis is time and $y$-axis is $\log((F(x)- F^\star)/F^\star)$. Here we use the result of CVX as an approximation of  $F^\star$.}
		\label{fig_2}
	\end{figure}
	
	\begin{figure}[!h]
		\centering
		\begin{subfigure}[t]{0.3\linewidth}
			\centering
			\includegraphics[scale=0.5]{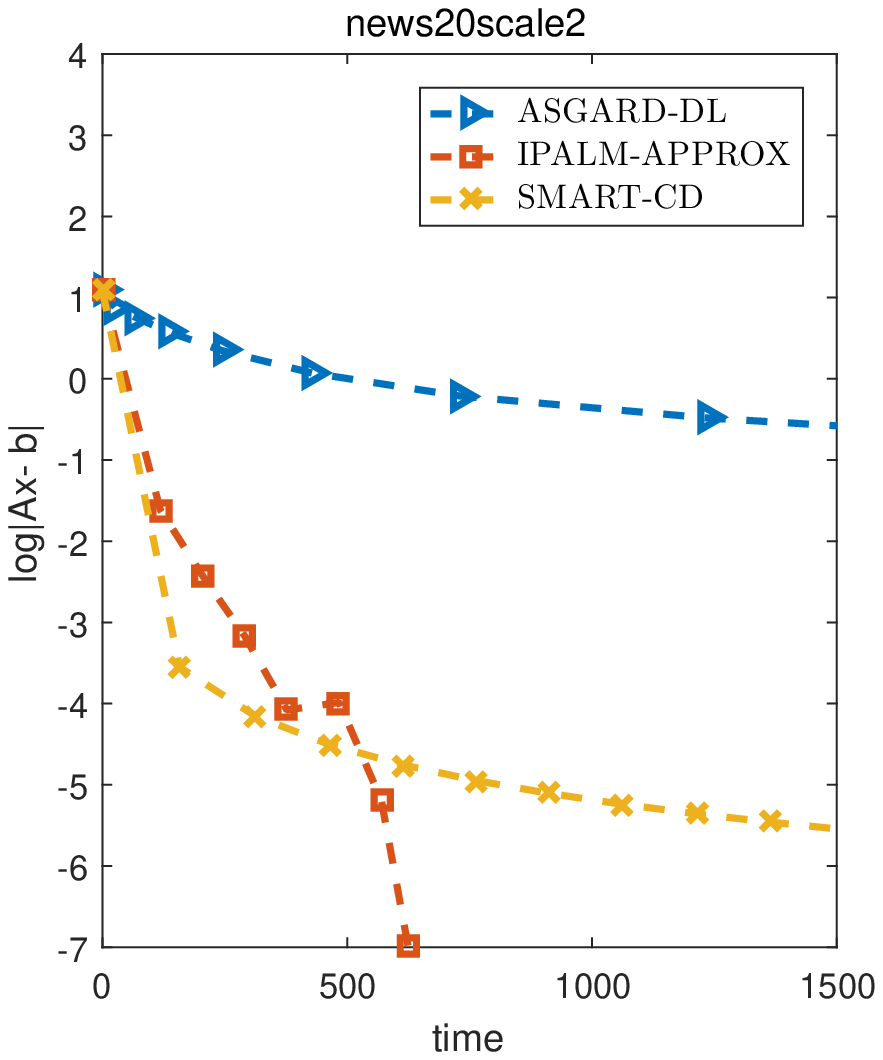}
			\caption{news20scale}
		\end{subfigure}
		\begin{subfigure}[t]{0.3\linewidth}
			\centering
			\includegraphics[scale=0.5]{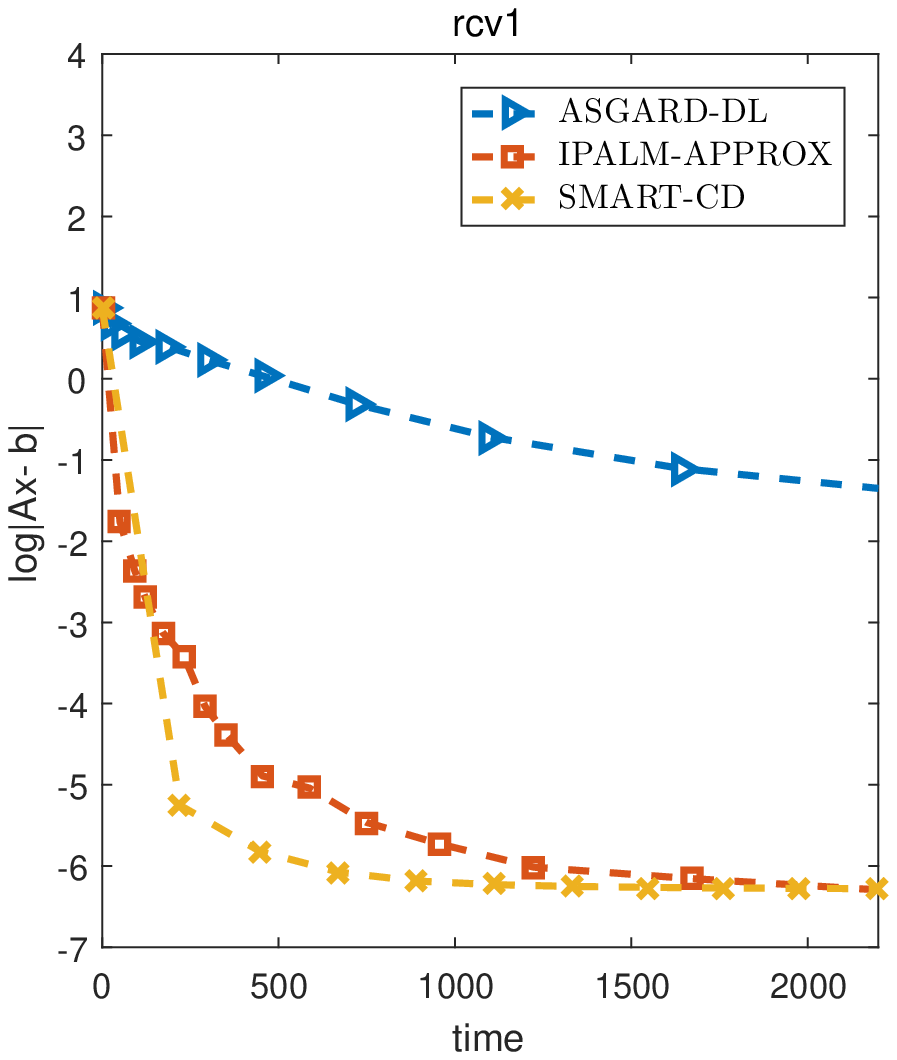}
			\caption{rcv1}
		\end{subfigure}
		\begin{subfigure}[t]{0.3\linewidth}
			\centering
			\includegraphics[scale=0.5]{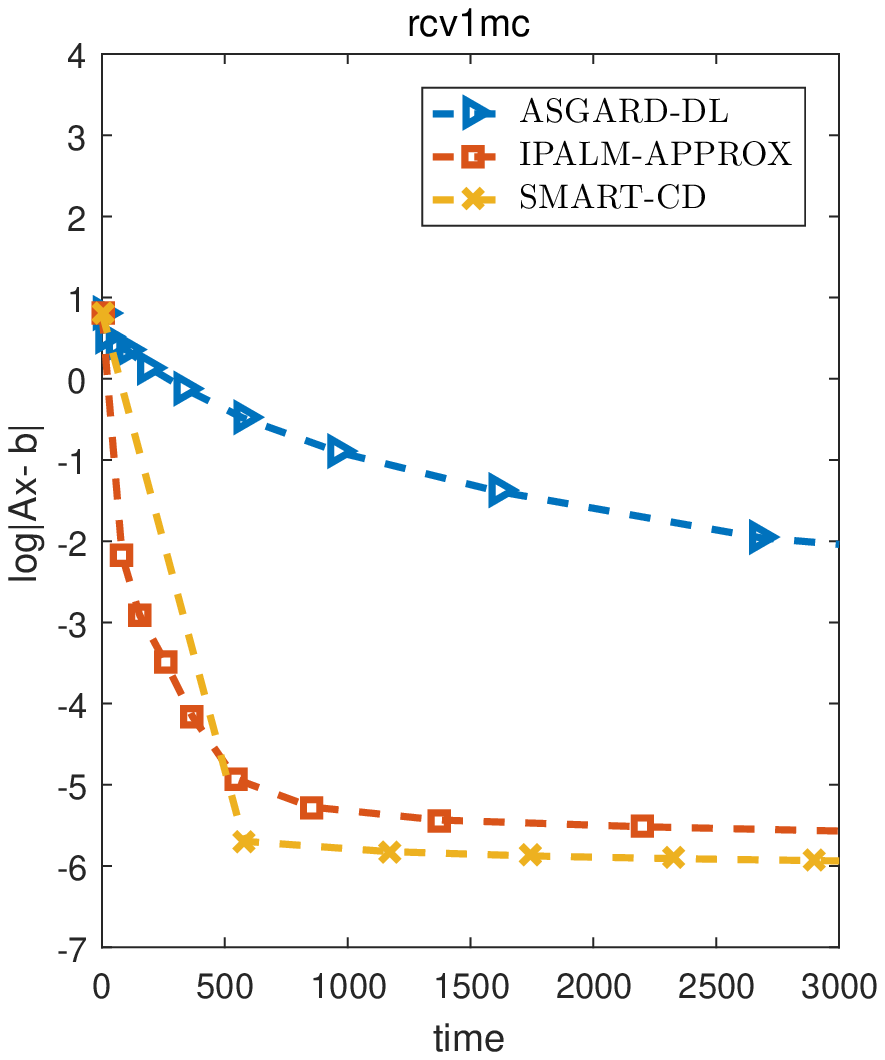}
			\caption{rcv1mc}
		\end{subfigure}
		\caption{Comparison of three algorithms for basis pursuit problem on three datasets. The $x$-axis is time and $y$-axis is infeasibility error $\log\|Ax-b\|$. }
		\label{fig_2_inf}
	\end{figure}
	\begin{table}[H]
		\centering
		\begin{tabular}{l|l|r|r}
			\hline
			Dataset & accuracy~\eqref{a:eraaa}& CVX time & IPALM-APPROX time \\
			\hline
			news20scale & $10^{-6}$ & $\sim$4200s & $\sim$600s \\
			rcv1 & $10^{-2}$ & $\sim$2700s & $\sim$1000s \\
			rcv1mc & $10^{-2}$ &  $\sim$1500s & $\sim$700s \\
			\hline
		\end{tabular}
		\caption{Running time  of CVX and IPALM-APPROX for basis pursuit problem on three datasets.}
		\label{tab_3}
	\end{table}
	
	\subsection{Fused Lasso}
	
	The third problem we solve is of the form:
	\begin{align*}
		\min_{x\in \mathbb{R}^n} \frac{1}{2}\norm{Ax- b}_2^2+ \lambda r\norm{x}_1+ \lambda (1- r)\sum_{i} |x_i- x_{i+ 1}|
	\end{align*}
	which is known as Fused Lasso problem~\cite{Tibshirani05sparsityand}. The datasets used are shown in Table~\ref{tab_1} and we set $\lambda r= \lambda (1- r)= 0.01$.
	
	%To fit these three algorithms, we set  $f(x)= \frac{1}{2}\norm{Ax- b}_2^2, g(x)= \lambda r\norm{x}_1, h(x)= \lambda (1- r)\norm{x}_1, p(x)= Mx$ where $M$ is the discretized 1D gradient.
	The results are shown in Figure $\ref{fig_3}$ and Table $\ref{tab_4}$. For this problem, we tested both IPALM-APPROX and IPALM-Katyusha. Note that for the datasets in Table~\ref{tab_1}, we have  $n\leq m\leq 2n$ where $m$ is the problem size in~\eqref{prime:special1}. According to Table~\ref{tab_0}, we should expect IPALM-Katyusha to work similarly as IPALM-APPROX, which is indeed observed in practice. Note that in our implementation we used $\tau=\sqrt{m}$ with single processor.
	Hence the computational time of IPALM-Katyusha can be further reduced when multi-processor and parallel implementation is used.	
	\begin{figure}
		\centering
		\begin{subfigure}[t]{0.32\linewidth}
			\centering
			\includegraphics[scale=0.5]{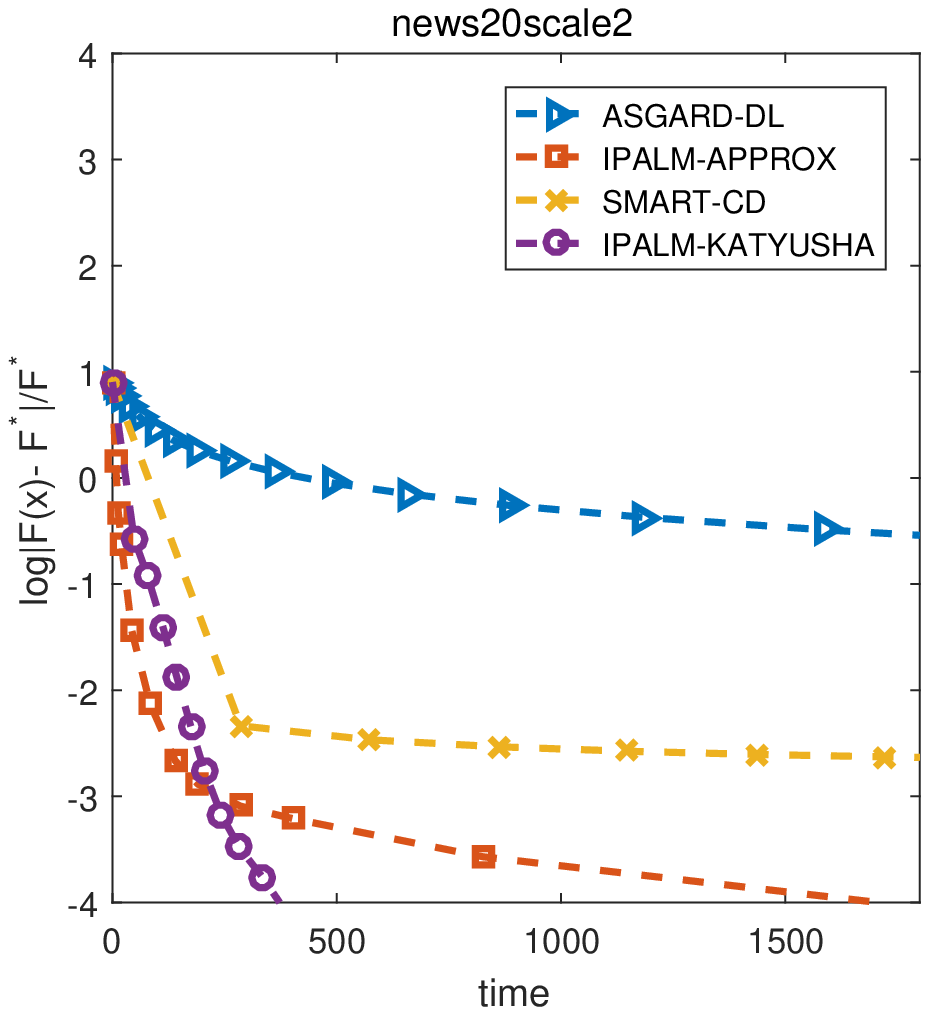}
			\caption{news20scale}
		\end{subfigure}
		\begin{subfigure}[t]{0.32\linewidth}
			\centering
			\includegraphics[scale=0.5]{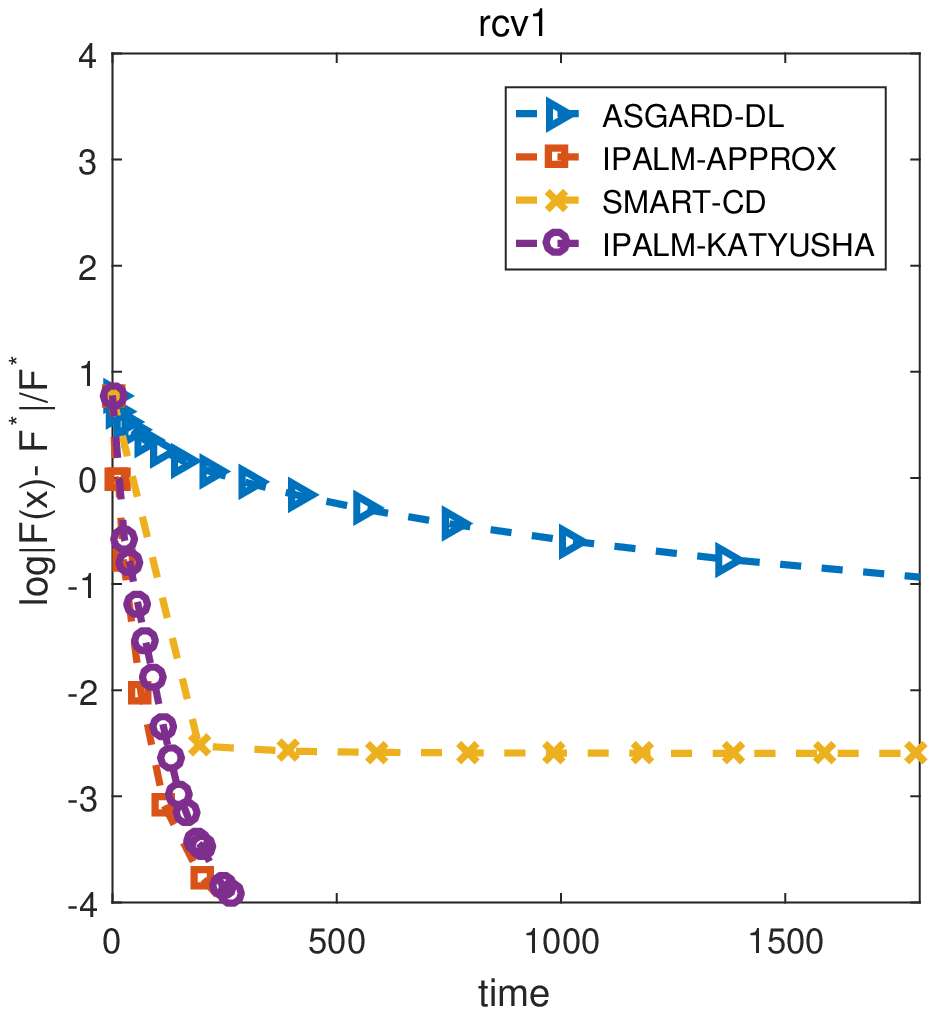}
			\caption{rcv1}
		\end{subfigure}
		\begin{subfigure}[t]{0.32\linewidth}
			\centering
			\includegraphics[scale=0.5]{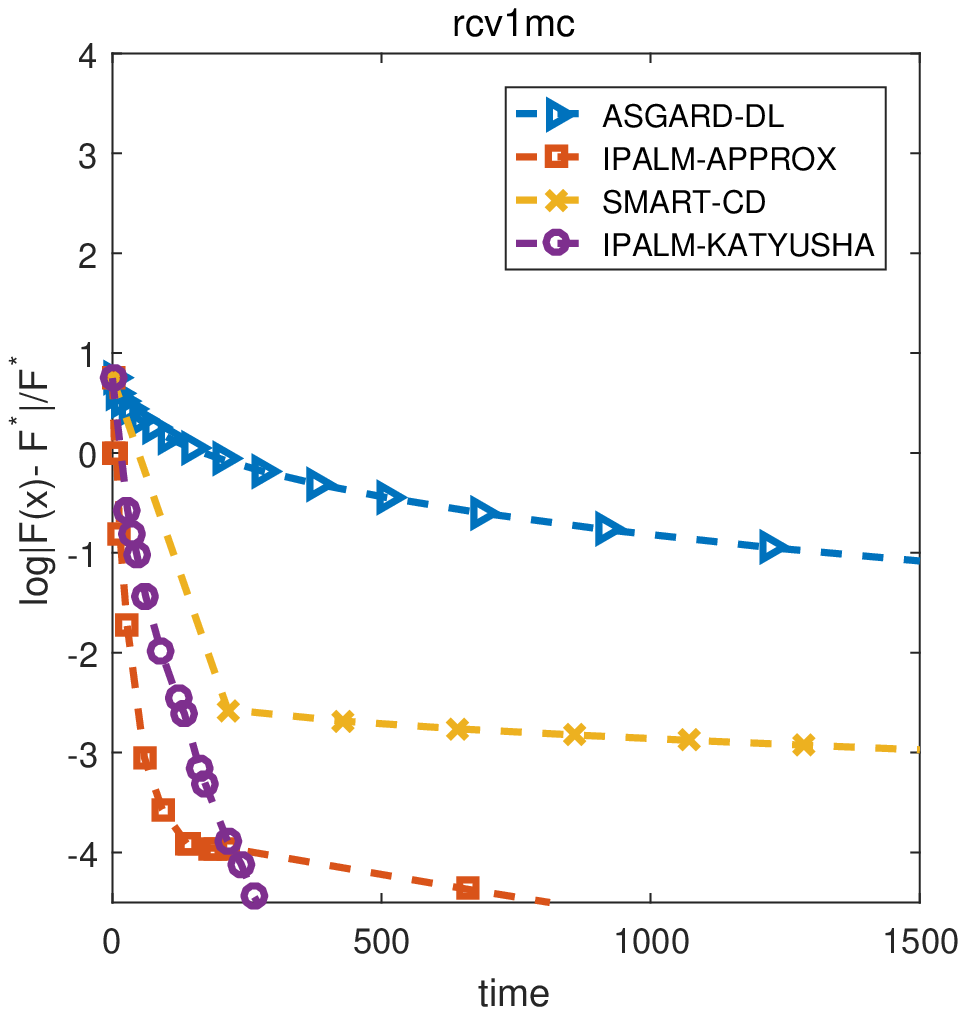}
			\caption{rcv1mc}
		\end{subfigure}
		\caption{Comparison of four algorithms for Fused Lasso problem on three datasets. The $x$-axis is time and $y$-axis is $\log((F(x)- F^\star)/F^\star)$. Here we use the result of CVX as an approximation of  $F^\star$.}
		\label{fig_3}
	\end{figure}
	
	\begin{table}[H]
		\centering
		\begin{tabular}{l|l|r|r|r}
			\hline
			Dataset & accuracy & CVX time & IPALM-APPROX time & IPALM-Katyusha time \\
			\hline
			news20scale & $10^{-4}$ & $\sim$1600s & $\sim$1700s & $\sim400s$\\
			rcv1 & $10^{-4}$ & $\sim$7000s & $\sim$300s & $\sim300$s\\
			rcv1mc & $10^{-4}$ &  $\sim$5500s & $\sim$400s & $\sim300$s\\
			\hline
		\end{tabular}
		\caption{Running time  of CVX,  IPALM-APPROX and IPALM-Katyusha for Fused Lasso on three datasets.}
		\label{tab_4}
	\end{table}
	As we can see form Figure $\ref{fig_3}$, IPALM-APPROX and IPALM-Katyusha both perform better than ASGARD and SMART-CD. From Table $\ref{tab_4}$,  IPALM-Katyusha  significantly outperforms CVX to get a mid-level accurate solution for these three datasets.
	\subsection{Soft Margin SVM}
	The last problem we solve is of the form:
	\begin{align*}
		\min_{x\in\mathbb{R}^n, \omega\in\mathbb{R}} \lambda\norm{x}_1+ \frac{1}{m} \sum_{i= 1}^{m} \max\left(0, 1- b_i\left(\langle a_i, x\rangle- \omega\right)\right)  
	\end{align*}
	which is known as $l_1$ regularized soft margin support vector machine problem~\cite{Zhu:2003:SVM:2981345.2981352}. Here $a_i\in \mathbb{R}^n$ are feature vectors and $b_i\in \{-1, 1\}$ are labels for $i= 1,\ldots,m$. We use three different datasets from libsvm \cite{chang2011libsvm}. The details about the datasets are given in Table $\ref{tab_5}$.
	\begin{table}[H]
		\centering
		\begin{tabular}{l|r|r}
			\hline
			Dataset & Training size ($m$) & Number of features ($n$)\\
			\hline
			w4a & 7366 & 300 \\
			w8a & 49479 & 300 \\
			real-sim & 72309 & 20958\\
			\hline
		\end{tabular}
		\caption{Datasets from libsvm}
		\label{tab_5}
	\end{table}
	%To fit these three algorithms, we denote $u= \begin{pmatrix}
	%x \\ \omega
	%\end{pmatrix}$ and set $f(u)= 0, g(u)= \lambda\norm{x}_1, h(u)= \frac{1}{m} \sum_{i= 1}^{m} \left\{0, 1- u_i\right\}, p(u)= Mu$ where $M= \left[ \begin{pmatrix}
	%b_1a_1 \\ -b_1
	%\end{pmatrix},\ldots, \begin{pmatrix}
	%b_ma_m \\ -b_m
	%\end{pmatrix}\right]^T\in \mathbb{R}^{m\times (n+ 1)}$.
	\begin{figure}[H]
		\centering
		\begin{subfigure}[t]{0.32\linewidth}
			\centering
			\includegraphics[scale=0.5]{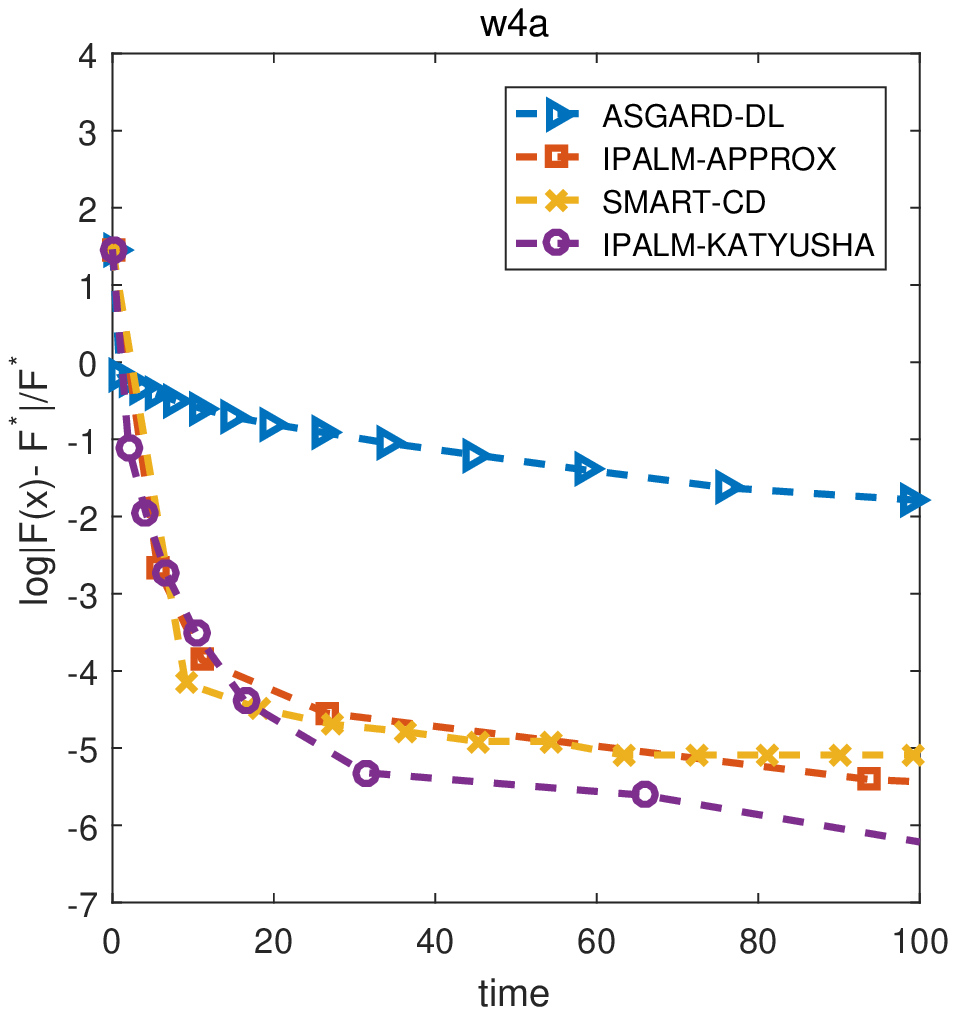}
			\caption{w4a}
		\end{subfigure}
		\begin{subfigure}[t]{0.32\linewidth}
			\centering
			\includegraphics[scale=0.5]{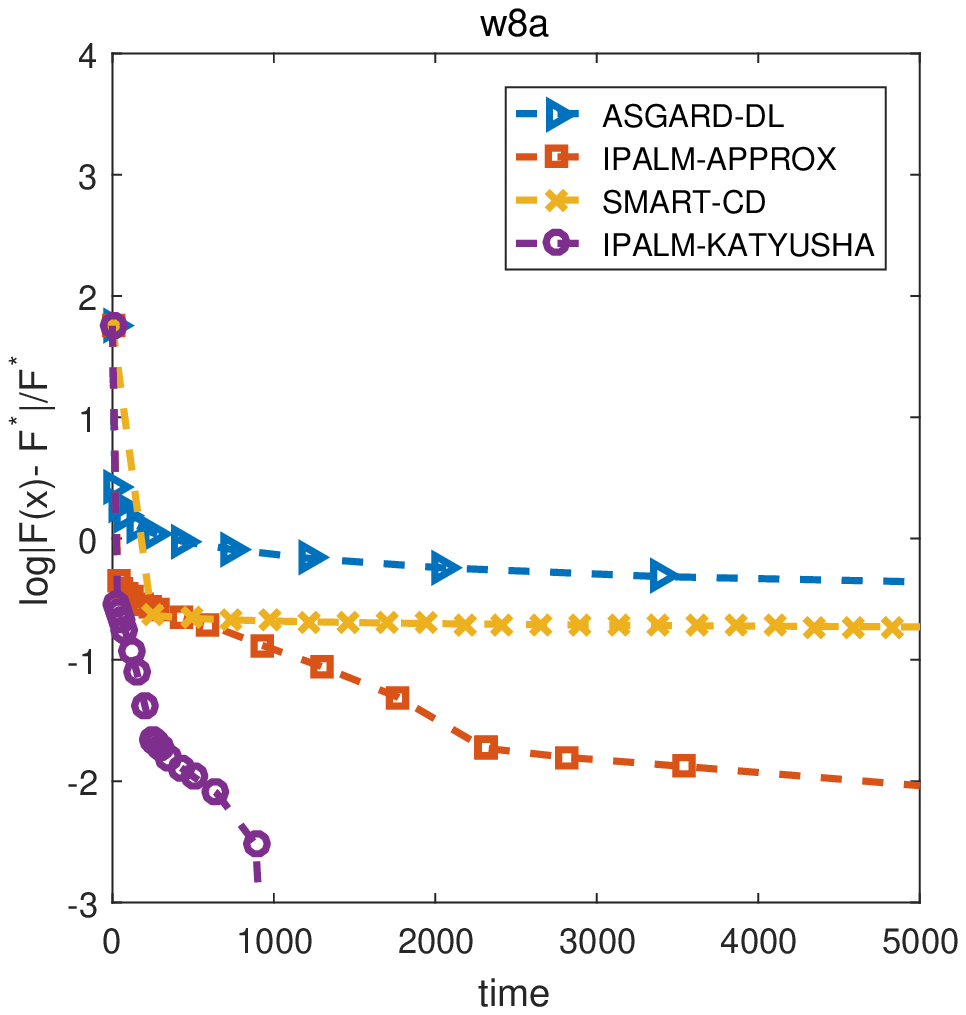}
			\caption{w8a}
		\end{subfigure}
		\begin{subfigure}[t]{0.32\linewidth}
			\centering
			\includegraphics[scale=0.5]{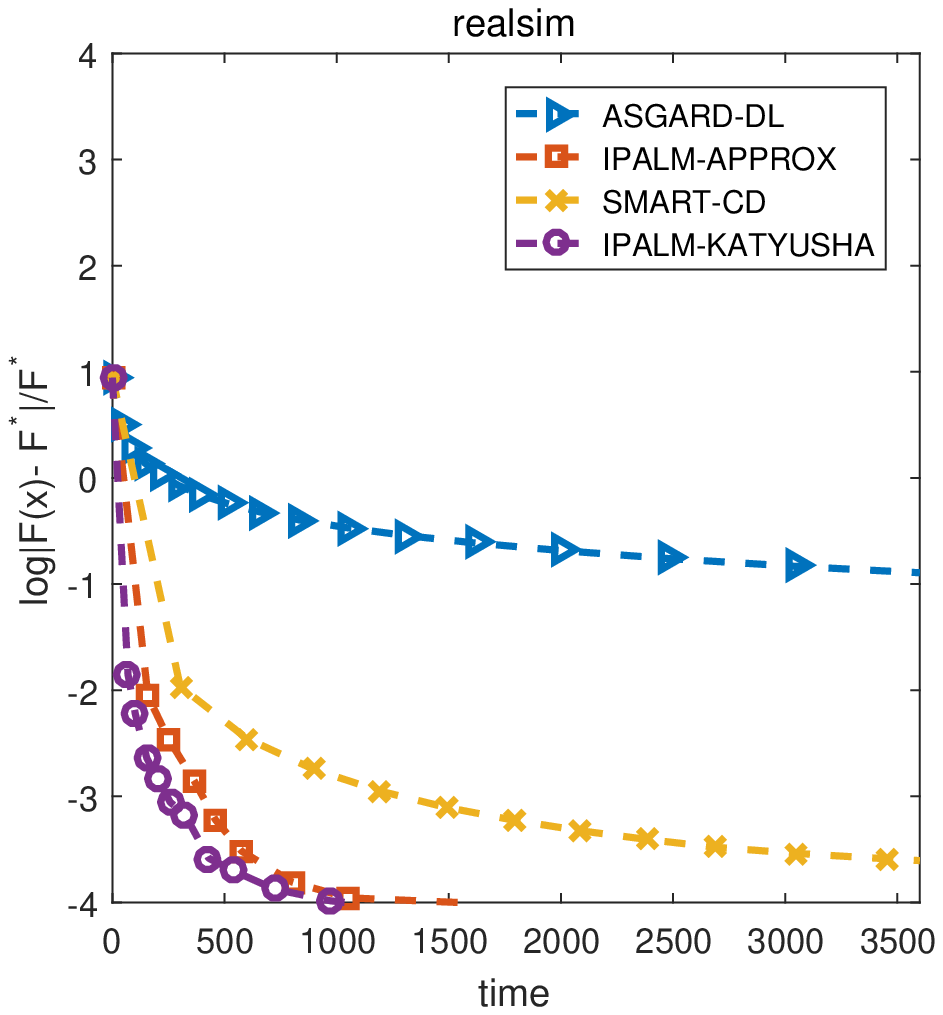}
			\caption{real-sim}
		\end{subfigure}
		\caption{Comparison of four algorithms for SVM problem on three datasets. The $x$-axis is time and $y$-axis is $\log((F(x)- F^\star)/F^\star)$. Here we use the best result of these four algorithms as an approximation of $F^\star$.}
		\label{fig_7}
	\end{figure}	
	Since here $m\geq n$, we expect IPALM-Katyusha to converge faster than IPALM-APPROX, as indicated by our theoretical bounds given in Table~\ref{tab_0}. Indeed, as we observe from Figure $\ref{fig_7}$, IPALM-Katyusha has the best performance for all datasets.   For w4a and real-sim, the difference of IPALM-Katyusha and IPALM-APPROX is small. But for w8a with $m\gg n$, IPALM-Katyusha significantly outperforms IPALM-APPROX, as well as SMART-CD and ASGARD-DL.  Note that for this problem CVX fails so we cannot compare the running time with CVX.

	%\subsection{The confidence level}
	%In this subsection, we use the result in Figure~\ref{fig_7}(a) to explain why and how to cut the plot. Since CVX fails to work for this problem, to have a guess about $F^\star$, we compute the objective value of the dual problem as lower bound, which is $245.8380384000021$. We use the best value obtained by these 4 algorithms as upper bound, which is $245.8380413740000$. By computation, $\epsilon_{c}= (F_{u}- F_{l})/F_{l}= 1.2097\times 10^{8}$. For any $F(x)$ such that $F(x)=(1+ \epsilon)F_{u} \ge (1+ \epsilon_{c})F_{u}$, we have:
	%$$\epsilon= \frac{F(x)- F_{u}}{F_{u}}\le \frac{F(x)- F^\star}{F^\star}\le \frac{F(x)- F_{l}}{F_{l}}= \epsilon_{c}+ (1+ \epsilon_{c})\epsilon$$
	%	When $\epsilon\ge 10^{-7}$, the order of $\epsilon$ and $\epsilon_{c}+ (1+ \epsilon_{c})\epsilon$ are the same. That is, if we only consider the part above $-7$ in plot, the difference of using $F_{u}$ or $F_{l}$ or $F^\star$ as optimal value is very small. So this plot is meaningful even though we do not know the exact $F^\star$.
	%\fi 	
	\section{Conclusion and Future Research}\label{sec::con}
	In this paper we consider a class of structured convex minimization problem and develop an inexact proximal augmented Lagrangian  method with explicit inner termination rule. Our framework allows arbitrary linearly convergent inner solver, including in particular many randomized first-order methods. %Not like many existing methods~\cite{ber14,cham11} considering $h_1\equiv 0$ or $h_2\equiv 0$ separately, 
	%we treat problem~\eqref{prime} in a unified way. 
	When $p(\cdot)$ is linear, under the same assumptions as~\cite{NecoaraPatrascuGlineur,Lan:2016:IFA:2874819.2874858,NedelcuNecoaraTran,PatrascuNecoara15,LiuLiuMa19MoR,lu18} but without the boundedness of $\dom(g)$, we obtain nearly optimal $\tilde O(1/\epsilon)$ and  $\tilde O(1/\sqrt{\epsilon})$ complexity bound respectively for the non-strongly convex and strongly convex case. 
	The flexible inner solver choice allows us to deal with large-scale constrained problem more efficiently, with the aid of recent advances in randomized first-order methods for unconstrained problem.
	We provide numerical evidence showing the 
	efficiency of our approach compared with existing ones when the problem dimension is high.
	
	There are several interesting directions to exploit in the future.
	\begin{enumerate}
		\item The complexity bound established in this paper for non-strongly convex problem is  $\tilde O(1/\epsilon)$.
		Throughout the paper we only rely on the fact that the sequence generated by PPA is bounded, whereas it is known that PPA can be linearly convergent if certain metric sub-regularity is satisfied, see e.g.~\cite{Yuan18LinearconvergenceofADMM}. We expect to obtain a linearly convergent rate under these conditions, see e.g.~\cite{li2018complexity}.
		
		\item In numerical experiments, the choice of $\beta_0$ does influence the performance. Can a reasonable guess on $\beta_0$ be derived from the analysis?
		
		\item When $f$ is only relatively smooth (see Section~\ref{sec:LBPG}), we only obtained $\tilde O(1/\epsilon^2)$ and $\tilde O(1/\epsilon)$ complexity bound for non-strongly convex and strongly convex case. Can we improve to $\tilde O(1/\epsilon)$ and $\tilde O(1/\sqrt{\epsilon})$?
		
		\item Can this work be extended to saddle point problem? In particular, \cite{hien2017inexact} discussed an inexact primal-dual method for nonbilinear saddle point problems with  bounded $dom(g)$. Can we get rid of the boundedness assumption for saddle point problem?
		
		\item  Can this work be extended to weakly convex case as in~\cite{dru18,rafique2018non}?
		
	\end{enumerate}

	\begin{appendix}

		\section{Inexact Proximal Point algorithm and inexact Augmented Lagrangian method}
		\subsection{Inexact Proximal Point Method}
		Let $\mathcal{T}:\mathbb{R}^{n+d}\to \mathbb{R}^{n+d}$ be a maximal monotone operator and $\mathcal{J}_{\rho}= (\mathcal{I}+ \rho\mathcal{T})^{-1}$ be the resolvent of $\cT$, where $\mathcal{I}$ denotes the identity operator. Then for any $z^*$ such that $0\in \mathcal{T}(z^*)$ \cite{rock76ppa}, 
		\begin{align}
			\label{PPA}
			\Mnorm{\mathcal{J}_{\rho}(z)- z^*}^2+ \Mnorm{\mathcal{J}_{\rho}(z)- z}^2\le \Mnorm{z- z^*}^2.
		\end{align}
		\begin{algorithm}[H]
			\caption{PPA}
			\label{ppa}
			\begin{algorithmic}
				\State \textbf{Input:} $z_0$, $\{\varepsilon_s\}, \{\rho_s\}$.
				\For {$k= 0,1,\ldots$}
				\State Compute $z^{s+1}\approx \mathcal{J}_{\rho_s}(z^{s})$ such that $\Mnorm{z^{s+1}- \mathcal{J}_{\rho_s}(z^{s})}\le \varepsilon_s$;
				\EndFor
			\end{algorithmic}
		\end{algorithm}
		\begin{lemma}\cite{rock76ppa}
			\label{lu}
			Let $\{z^{s}\}$ be the sequence generated by Algorithm $\ref{ppa}$. Then for any $z^*$ such that $0\in \cT(z^*)$,
			\begin{align*}
				\Mnorm{z^{s+1}- z^*}\le \Mnorm{z_0- z^*}+ \sum_{i= 0}^{s}\varepsilon_i \\
				\Mnorm{z^{s+1}- z^{s}}\le \Mnorm{z_0- z^*}+ \sum_{i= 0}^{s}\varepsilon_i
			\end{align*}
		\end{lemma}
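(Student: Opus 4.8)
The plan is to prove both inequalities by peeling off the inexactness error with the triangle inequality and then invoking the firm nonexpansiveness estimate~\eqref{PPA} of the resolvent, which already encodes the one crucial fact needed. First I would bound the distance to $z^*$. Writing $\mathcal{J}_{\rho_s}(z^s)$ for the exact resolvent step, the triangle inequality gives
\begin{align*}
\Mnorm{z^{s+1}-z^*}\leq \Mnorm{z^{s+1}-\mathcal{J}_{\rho_s}(z^s)}+\Mnorm{\mathcal{J}_{\rho_s}(z^s)-z^*}.
\end{align*}
The first term is at most $\varepsilon_s$ by the stopping criterion in Algorithm~\ref{ppa}, while the second is controlled by~\eqref{PPA} applied with $z=z^s$ and $\rho=\rho_s$: discarding the nonnegative term $\Mnorm{\mathcal{J}_{\rho_s}(z^s)-z^s}^2$ yields $\Mnorm{\mathcal{J}_{\rho_s}(z^s)-z^*}\leq \Mnorm{z^s-z^*}$. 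Hence the one-step recursion $\Mnorm{z^{s+1}-z^*}\leq \Mnorm{z^s-z^*}+\varepsilon_s$ holds, and unrolling it down to the initial index (a trivial induction, using $z^0=z_0$) gives the first bound $\Mnorm{z^{s+1}-z^*}\leq \Mnorm{z_0-z^*}+\sum_{i=0}^s \varepsilon_i$.

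For the second inequality I would again split via the triangle inequality,
\begin{align*}
\Mnorm{z^{s+1}-z^s}\leq \Mnorm{z^{s+1}-\mathcal{J}_{\rho_s}(z^s)}+\Mnorm{\mathcal{J}_{\rho_s}(z^s)-z^s},
\end{align*}
bounding the first term by $\varepsilon_s$ as before. For the second term I would use the complementary consequence of~\eqref{PPA}, this time discarding $\Mnorm{\mathcal{J}_{\rho_s}(z^s)-z^*}^2$ to obtain $\Mnorm{\mathcal{J}_{\rho_s}(z^s)-z^s}\leq \Mnorm{z^s-z^*}$. This produces $\Mnorm{z^{s+1}-z^s}\leq \varepsilon_s+\Mnorm{z^s-z^*}$, and substituting the already-established bound $\Mnorm{z^s-z^*}\leq \Mnorm{z_0-z^*}+\sum_{i=0}^{s-1}\varepsilon_i$ finishes the proof, since $\varepsilon_s+\sum_{i=0}^{s-1}\varepsilon_i=\sum_{i=0}^s\varepsilon_i$.

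There is no serious obstacle here; this is the classical Rockafellar estimate for the inexact proximal point method, and~\eqref{PPA} does all the analytic work. The only points requiring minor care are the bookkeeping of the summation indices when unrolling the recursion, and the observation that~\eqref{PPA} must be exploited twice, discarding a \emph{different} nonnegative left-hand-side term each time so as to feed the two separate estimates.
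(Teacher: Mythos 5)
Your proposal is correct and follows essentially the same route as the source the paper relies on: the paper does not reproduce a proof of this lemma but cites Rockafellar, and the classical argument there is precisely yours—drop one of the two nonnegative terms in \eqref{PPA} to get $\Mnorm{\mathcal{J}_{\rho_s}(z^{s})-z^*}\le\Mnorm{z^{s}-z^*}$ (respectively $\Mnorm{\mathcal{J}_{\rho_s}(z^{s})-z^{s}}\le\Mnorm{z^{s}-z^*}$), combine with the triangle inequality and the stopping criterion $\Mnorm{z^{s+1}-\mathcal{J}_{\rho_s}(z^{s})}\le\varepsilon_s$, and unroll the recursion. Both inequalities and the index bookkeeping (including the $s=0$ base case via $z^0=z_0$) check out.
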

		We now give a stochastic generalization of Algorithm~\ref{ppa}.
		\begin{algorithm}[H]
			\caption{sPPA}
			\label{sPPA}
			\begin{algorithmic}
				\State \textbf{Input:} $z_0$, $\{\varepsilon_s\}, \{\rho_s\}$.
				\For {$k= 0,1,\ldots$}
				\State Compute $z^{s+1}\approx \mathcal{J}_{\rho_s}(z^{s})$ such that $\mathbb{E}\left[\Mnorm{z^{s+1}- \mathcal{J}_{\rho_s}(z^{s})}^2\right]\le \varepsilon_s^2$;
				\EndFor
			\end{algorithmic}
		\end{algorithm}	
		We then extend Lemma $\ref{lu}$ for Algorithm~\ref{sPPA}.
		\begin{lemma}
			\label{li}
			Let $\{z^{s}\}$ be the sequence generated by Algorithm $\ref{sPPA}$. Then for any $z^*$ such that $0\in \cT(z^*)$,
			\begin{align*}
				\mathbb{E}\left[\Mnorm{z^{s+1}- z^*}\right]\le \Mnorm{z_0- z^*}+ \sum_{i= 0}^{s}\varepsilon_i \\
				\mathbb{E}\left[\Mnorm{z^{s+1}- z^{s}}\right]\le \Mnorm{z_0- z^*}+ \sum_{i= 0}^{s}\varepsilon_i \\
				\left(\mathbb{E}\left[\Mnorm{z^{s+1}- z^*}^2\right]\right)^{1/2}\le \Mnorm{z_0- z^*}+ \sum_{i= 0}^{s}\varepsilon_i 
			\end{align*}		
		\end{lemma}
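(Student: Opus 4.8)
The plan is to mimic the deterministic argument behind Lemma~\ref{lu}, replacing the ordinary Euclidean norm by the $L^2$-norm of a random vector, namely $\left(\bE\left[\Mnorm{X}^2\right]\right)^{1/2}$. The essential observation is that this quantity is itself a norm and hence satisfies the triangle (Minkowski) inequality, which serves as the stochastic replacement for the two triangle-inequality steps used in the proof of Lemma~\ref{lu}. I would first establish the third (strongest) bound, then deduce the first from it by Jensen's inequality, and finally obtain the second by a variant of the same argument.

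For the third bound I would write $z^{s+1}-z^*=(z^{s+1}-\cJ_{\rho_s}(z^s))+(\cJ_{\rho_s}(z^s)-z^*)$ and apply Minkowski's inequality to get
$$\left(\bE\left[\Mnorm{z^{s+1}-z^*}^2\right]\right)^{1/2}\le \left(\bE\left[\Mnorm{z^{s+1}-\cJ_{\rho_s}(z^s)}^2\right]\right)^{1/2}+\left(\bE\left[\Mnorm{\cJ_{\rho_s}(z^s)-z^*}^2\right]\right)^{1/2}.$$
The first term on the right is at most $\varepsilon_s$ directly from the stopping criterion of Algorithm~\ref{sPPA}. For the second term, inequality~\eqref{PPA} holds pointwise (almost surely) and yields $\Mnorm{\cJ_{\rho_s}(z^s)-z^*}\le \Mnorm{z^s-z^*}$; squaring and taking expectations gives $\left(\bE\left[\Mnorm{\cJ_{\rho_s}(z^s)-z^*}^2\right]\right)^{1/2}\le \left(\bE\left[\Mnorm{z^s-z^*}^2\right]\right)^{1/2}$. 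Combining these produces the one-step recursion, and since $z_0$ is deterministic a straightforward induction delivers the third inequality.

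The first inequality is then immediate from Jensen's inequality, since $\bE\left[\Mnorm{z^{s+1}-z^*}\right]\le \left(\bE\left[\Mnorm{z^{s+1}-z^*}^2\right]\right)^{1/2}$. For the second inequality I would instead decompose $z^{s+1}-z^s=(z^{s+1}-\cJ_{\rho_s}(z^s))+(\cJ_{\rho_s}(z^s)-z^s)$ and again apply Minkowski. The first term is bounded by $\varepsilon_s$ as before, while for the second I would invoke the other consequence of~\eqref{PPA}, namely $\Mnorm{\cJ_{\rho_s}(z^s)-z^s}\le \Mnorm{z^s-z^*}$ almost surely; passing to the $L^2$-norm and using the already-established third inequality to bound $\left(\bE\left[\Mnorm{z^s-z^*}^2\right]\right)^{1/2}\le \Mnorm{z_0-z^*}+\sum_{i=0}^{s-1}\varepsilon_i$ gives the claimed estimate, and Jensen's inequality once more converts this into the stated bound on $\bE\left[\Mnorm{z^{s+1}-z^s}\right]$.

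There is no serious obstacle here: the entire content is that working with the $L^2$-norm turns the deterministic triangle-inequality steps into Minkowski steps, while the contraction estimates from~\eqref{PPA} continue to hold almost surely and therefore survive passage to the $L^2$-norm. The only points requiring mild care are that the stopping criterion is stated directly in terms of the second moment, so it plugs into the $L^2$-norm with no further manipulation, and that $z_0$ being deterministic supplies the correct base case for the induction.
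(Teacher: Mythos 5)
Your proof is correct, but it is organized differently from the paper's. The paper proves the first two estimates by applying the deterministic Lemma~\ref{lu} pathwise---with the realized errors $\Mnorm{z^{i+1}-\cJ_{\rho_i}(z^{i})}$ in place of $\varepsilon_i$---and then taking expectations, using $\bE\left[\Mnorm{z^{i+1}-\cJ_{\rho_i}(z^{i})}\right]\le \left(\bE\left[\Mnorm{z^{i+1}-\cJ_{\rho_i}(z^{i})}^2\right]\right)^{1/2}\le\varepsilon_i$; only the third estimate is handled in $L^2$, via an expansion of~\eqref{PPA} combined with $\bE[XY]\le(\bE[X^2])^{1/2}(\bE[Y^2])^{1/2}$, which is in effect an inlined proof of the Minkowski step you invoke. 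You instead run the entire argument in $L^2(\Omega)$: Minkowski plus the pointwise contraction $\Mnorm{\cJ_{\rho_s}(z^{s})-z^*}\le\Mnorm{z^{s}-z^*}$ gives the recursion $\left(\bE\left[\Mnorm{z^{s+1}-z^*}^2\right]\right)^{1/2}\le\left(\bE\left[\Mnorm{z^{s}-z^*}^2\right]\right)^{1/2}+\varepsilon_s$, and the remaining two bounds follow by Jensen and by the decomposition $z^{s+1}-z^{s}=(z^{s+1}-\cJ_{\rho_s}(z^{s}))+(\cJ_{\rho_s}(z^{s})-z^{s})$ together with the almost-sure bound $\Mnorm{\cJ_{\rho_s}(z^{s})-z^{s}}\le\Mnorm{z^{s}-z^*}$, which indeed follows from~\eqref{PPA} by dropping a nonnegative term. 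Your route is more unified and self-contained (it never needs the deterministic lemma), and it delivers something slightly stronger along the way: a second-moment bound on $z^{s+1}-z^{s}$, whereas the statement and the paper's argument only give the first-moment bound for that quantity. What the paper's organization buys is brevity given that Lemma~\ref{lu} is already proved: the first two estimates come essentially for free from it, and only the recursion for second moments requires new work.
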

		\begin{proof}
			By the definition of $z^{s}$, we have $\left(\mathbb{E}\Mnorm{z^{s+1}- \mathcal{J}_{\rho_s}(z^{s})}\right)^2\le \mathbb{E}\Mnorm{z^{s+1}- \mathcal{J}_{\rho_s}(z^{s})}^2\le \varepsilon_s^2$. The first and second estimates can be obtained by taking expectation on both sides of the result of Lemma $\ref{lu}$. The third estimate is derived from $\eqref{PPA}$:
			\begin{align*}
				0&\le \Mnorm{\mathcal{J}_{\rho_s}- z^{s}}^2\le \Mnorm{z^{s}- z^*}^2- \Mnorm{\mathcal{J}_{\rho_s}(z^{s})- z^*}^2 \\
				&= \Mnorm{z^{s}- z^*}^2- \Mnorm{\mathcal{J}_{\rho_s}(z^{s})- z^{s+1}+ z^{s+1}- z^*}^2  \\
				&\le \Mnorm{z^{s}- z^*}^2- \Mnorm{z^{s+1}- z^*}^2-  \Mnorm{\mathcal{J}_{\rho_s}(z^{s})- z^{s+1}}^2+ 2 \Mnorm{\mathcal{J}_{\rho_s}(z^{s})- z^{s+1}}\Mnorm{z^{s+1}- z^*}
			\end{align*}
			Taking expectation on both sides we have:
			\begin{align*}
				0&\le \mathbb{E}\left[\Mnorm{z^{s}- z^*}^2\right]- \mathbb{E}\left[\Mnorm{z^{s+1}- z^*}^2\right]-  \mathbb{E}\left[\Mnorm{\mathcal{J}_{\rho_s}(z^{s})- z^{s+1}}^2\right]+ 2 \mathbb{E}\left[\Mnorm{\mathcal{J}_{\rho_s}(z^{s})- z^{s+1}}\Mnorm{z^{s+1}- z^*}\right] \\
				&\le \mathbb{E}\left[\Mnorm{z^{s}- z^*}^2\right]- \mathbb{E}\left[\Mnorm{z^{s+1}- z^*}^2\right]-  \mathbb{E}\left[\Mnorm{\mathcal{J}_{\rho_s}(z^{s})- z^{s+1}}^2\right]+ 2\left(\mathbb{E}\left[\Mnorm{\mathcal{J}_{\rho_s}(z^{s})- z^{s+1}}^2\right]\mathbb{E}\left[\Mnorm{z^{s+1}- z^*}^2\right]\right)^{1/2}  \\
				&= \mathbb{E}\left[\Mnorm{z^{s}- z^*}^2\right]- \left(\left(\mathbb{E}\left[\Mnorm{z^{s+1}- z^*}^2\right]\right)^{1/2}- \left(\mathbb{E}\left[\Mnorm{\mathcal{J}_{\rho_s}(z^{s})- z^{s+1}}^2\right]\right)^{1/2}\right)^2
			\end{align*}		
			where the second inequality we use $\mathbb{E}[XY]\le (\bE[X^2])^{1/2}(\bE[Y^2])^{1/2}$. Therefore
			\begin{align*}
				\left(\mathbb{E}\left[\Mnorm{z^{s+1}- z^*}^2\right]\right)^{1/2}- \varepsilon_s\le \left(\mathbb{E}\left[\Mnorm{z^{s+1}- z^*}^2\right]\right)^{1/2}- \left(\mathbb{E}\left[\Mnorm{\mathcal{J}_{\rho_s}(z^{s})- z^{s+1}}^2\right]\right)^{1/2}\le \left(\mathbb{E}\left[\Mnorm{z^{s}- z^*}^2\right]\right)^{1/2}
			\end{align*}
			Then summing up the latter inequalities from $s= 0$ we obtain the third inequality.
		\end{proof}
		
		\subsection{Inexact ALM}
		We define the maximal monotone operator $\cT_{l}$ as follows.
		\begin{align*}
			\cT_l(x;\lambda)&=\left\{(v;u): (v;-u)\in \partial L(x;\lambda)\right\}\\
			&=\left\{\begin{pmatrix}
				\nabla f(x)+\partial g(x)+\nabla p(x)\lambda\\
				-p(x)+\partial h^*(\lambda)
			\end{pmatrix}\right \}
		\end{align*}
		Recall the definitions in~\eqref{a:erdfgtff}. We further let $\Lambda^\star(y,\lambda,\beta):=\Lambda(p^\star(y,\lambda,\beta);\lambda,\beta)$.
		By first order optimality condition and~\eqref{a:nablehu}, we know that
		$$0\in \nabla f(x^\star(y,\lambda,\beta))+\partial g(x^\star(y,\lambda,\beta))+\nabla p(x^\star(y, \lambda,\beta)) \Lambda^\star(y,\lambda,\beta)+\beta(x^\star(y,\lambda,\beta)-y)$$
		Secondly we know from~\eqref{a:optimalitycondition} that
		\begin{align*}
			p^\star(y,\lambda,\beta)-\beta(\Lambda^\star(y,\lambda, \beta)-\lambda)\in \partial h^*(\Lambda^\star(y,\lambda,\beta)).
		\end{align*}
		It follows that
		\begin{align}\label{a:dfertsd}
			( \cI+\beta^{-1}\cT_l)^{-1}(y;\lambda)=(x^\star(y,\lambda,\beta);\Lambda^\star(y,\lambda,\beta))
		\end{align}
		We can then establish the following well known link between inexact ALM and inexact PPA.
		\begin{proposition}[compare with~\cite{rock76ppa}]\label{prop:3fgrtrt}
			Algorithm~\ref{ipALM} is a special case of Algorithm~\ref{sPPA} with $\cT=\cT_l$,  $\rho_s=1/\beta_s$ and $\varepsilon_s= \sqrt{2\epsilon_s/\beta_s}$.
		\end{proposition}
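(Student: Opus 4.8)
The plan is to exhibit IPALM as an instance of sPPA under the stated substitution, with the crucial bookkeeping being an index shift that pairs the primal iterate lagging one step behind the dual iterate. Concretely, set $z^s:=(x^{s-1};\lambda^s)$, so that the initialization $z^0=(x^{-1};\lambda^0)$ matches the input of Algorithm~\ref{sPPA} and the iterate produced at outer step $s$ is $z^{s+1}=(x^s;\lambda^{s+1})$. With $\cT=\cT_l$ and $\rho_s=1/\beta_s$, identity~\eqref{a:dfertsd} gives
\begin{align*}
\cJ_{\rho_s}(z^s)=(\cI+\beta_s^{-1}\cT_l)^{-1}(x^{s-1};\lambda^s)=\left(x^\star(x^{s-1},\lambda^s,\beta_s);\;\Lambda^\star(x^{s-1},\lambda^s,\beta_s)\right).
\end{align*}
Since $H_s(\cdot)=L(\cdot;x^{s-1},\lambda^s,\beta_s)$, the point $x^\star(x^{s-1},\lambda^s,\beta_s)$ is exactly the minimizer of $H_s$, while $x^s$ is its approximate minimizer; and the dual update $\lambda^{s+1}=\Lambda(p(x^s);\lambda^s,\beta_s)$ is the image under $\Lambda$ of the approximate primal iterate. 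Thus $z^{s+1}$ is precisely the approximate resolvent, and it remains only to verify that its accuracy matches $\varepsilon_s=\sqrt{2\epsilon_s/\beta_s}$.

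To this end I would write out the squared resolvent error as the sum of its primal and dual parts,
\begin{align*}
\Mnorm{z^{s+1}-\cJ_{\rho_s}(z^s)}^2=\Mnorm{x^s-x^\star(x^{s-1},\lambda^s,\beta_s)}^2+\Mnorm{\Lambda(p(x^s);\lambda^s,\beta_s)-\Lambda(p^\star(x^{s-1},\lambda^s,\beta_s);\lambda^s,\beta_s)}^2,
\end{align*}
and recognize the right-hand side as exactly the quantity controlled by Lemma~\ref{l:sL2}. Applying that lemma with $x=x^s$, $y=x^{s-1}$, $\lambda=\lambda^s$, $\beta=\beta_s$ yields $\Mnorm{z^{s+1}-\cJ_{\rho_s}(z^s)}^2\le \tfrac{2}{\beta_s}\left(H_s(x^s)-H_s^\star\right)$. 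Taking expectation and invoking the stopping criterion $\bE[H_s(x^s)-H_s^\star]\le\epsilon_s$ of Algorithm~\ref{ipALM} gives $\bE[\Mnorm{z^{s+1}-\cJ_{\rho_s}(z^s)}^2]\le 2\epsilon_s/\beta_s=\varepsilon_s^2$, which is precisely the inexactness condition required in Algorithm~\ref{sPPA}.

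I do not expect a serious analytic obstacle here, since the two genuinely technical ingredients are already available: equation~\eqref{a:dfertsd} establishes that the proximal-point resolvent of $\cT_l$ coincides with the exact proximal ALM step, and Lemma~\ref{l:sL2} converts the subproblem optimality gap into control of both the primal and dual distances simultaneously. The one point demanding care is the distinction between the dual component $\Lambda^\star=\Lambda(p^\star;\lambda^s,\beta_s)$ of the exact resolvent and the algorithmic update $\lambda^{s+1}=\Lambda(p(x^s);\lambda^s,\beta_s)$ built from the approximate primal iterate; it is exactly this matching that makes the squared error decompose in the form that Lemma~\ref{l:sL2} bounds, and failing to align the two would break the argument. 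Apart from this bookkeeping and the index shift $z^s=(x^{s-1};\lambda^s)$, the statement is Rockafellar's classical equivalence between inexact ALM and inexact PPA, here adapted to the expected-square inexactness measure of sPPA.
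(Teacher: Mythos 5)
Your proposal is correct and follows exactly the route the paper takes: the paper's own proof is the one-line statement that the result follows from identity~\eqref{a:dfertsd} and Lemma~\ref{l:sL2}, and your write-up is precisely the expansion of that argument, with the correct index shift $z^s=(x^{s-1};\lambda^s)$, the identification of $\cJ_{\rho_s}(z^s)$ via~\eqref{a:dfertsd}, and the application of Lemma~\ref{l:sL2} to convert the expected objective gap $\bE[H_s(x^s)-H_s^\star]\leq\epsilon_s$ into the expected squared resolvent error $2\epsilon_s/\beta_s=\varepsilon_s^2$. No gaps.
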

		\begin{proof}
			This follows from~\eqref{a:dfertsd} and Lemma~\ref{l:sL2}.
		\end{proof}

		\section{Missing proofs}
		\subsection{Proofs in Section~\ref{sec:prox_ALM}}\label{app:pALM}
		\begin{proof}[proof of Lemma~\ref{l:infimalconv}]
			For any $x, y\in \R^n$ and $\alpha\in [0,1]$, let $z= \alpha x+ (1- \alpha)y$.
			By condition~\eqref{a:herffg}, 
			$$h\left(p(z)- \alpha u- (1- \alpha) v\right)\le \alpha h(p(x)- u)+ (1- \alpha)h(p(y)- v),\enspace \forall u,v\in \R^d.$$
			It follows that
			\begin{align*}
				\tilde{\psi}(z)&= \inf_\omega \left\{h(p(z)- \omega)+ \psi(\omega) \right\}= \inf_{u,v} \left\{h\left(p(z)- \alpha u- (1- \alpha)v\right)+ \psi(\alpha u+ (1- \alpha)v) \right\}\\
				&\le \inf_{u,v} \left\{\alpha h(p(x)- u)+ (1- \alpha)h(p(y)- v)+ \alpha\psi(u)+ (1- \alpha)\psi(v) \right\} \\
				&= \alpha\inf_u \left\{h(p(x)- u)+ \psi(u) \right\}+ (1- \alpha)\inf_v \left\{h(p(y)- v)+ \psi(v) \right\} \\
				&= \alpha \tilde{\psi}(x)+ (1- \alpha)\tilde{\psi}(y).
			\end{align*}
		\end{proof}
		\begin{proof}[proof of Lemma~\ref{l:rterrr}]
			The convexity of $\tilde \psi$ follows from~\eqref{a:dualityhbeta} and Lemma~\ref{l:infimalconv} with $\psi(w):=\frac{1}{2\beta}\|w\|^2+\<w,\lambda>$. The gradient formula follows from~\eqref{a:nablehu}.
		\end{proof}
		
		\begin{proof}[proof of Lemma~\ref{bound_y_1.3}]
			This is a direct consequence of Proposition~\ref{prop:3fgrtrt} and Lemma~\ref{li}.
		\end{proof}

		\begin{proof}[proof of Corollary~\ref{coro:bound}]
			By Lemma~\ref{bound_y_1.3}, we have
			$$
			\mathbb{E}\left[\Mnorm{(x^{s},\lambda^{s+ 1})- (x^{s-1},\lambda^{s})}\right]\le \Mnorm{(x^{-1},\lambda^0)- (x^\star,\lambda^\star)}+ \frac{2\sqrt{\epsilon_0/\beta_0}}{1-\sqrt{\eta/\rho}},\enspace \forall s\geq 0,
			$$
			and
			$$
			\bE\left[ \Mnorm{(x^{s},\lambda^{s+ 1})- (x^\star,\lambda^\star)}^2\right] \leq \left(\Mnorm{(x^{-1},\lambda^0)- (x^\star,\lambda^\star)}+ \frac{2\sqrt{\epsilon_0/\beta_0}}{1-\sqrt{\eta/\rho}} \right)^2,\enspace \forall s\geq 0.
			$$
			Consequently,
			$$
			\mathbb{E}\left[\Mnorm{\lambda^{s+ 1}-\lambda^{s}}\right]\le \Mnorm{(x^{-1},\lambda^0)- (x^\star,\lambda^\star)}+ \frac{2\sqrt{\epsilon_0/\beta_0}}{1-\sqrt{\eta/\rho}},\enspace \forall s\geq 0,
			$$
			and
			$$
			\max\left( \bE\left[ \Mnorm{x^{s}-x^\star}^2\right],  \bE\left[ \Mnorm{\lambda^{s+1}-\lambda^\star}^2\right] \right)  \leq \left(\Mnorm{(x^{-1},\lambda^0)- (x^\star,\lambda^\star)}+ \frac{2\sqrt{\epsilon_0/\beta_0}}{1-\sqrt{\eta/\rho}} \right)^2,\enspace \forall s\geq 0.
			$$
			We then conclude.
		\end{proof}
		
		\begin{proof}[proof of Theorem~\ref{rock2}]
			First,
			\begin{equation}\label{eq:fsr}
			\begin{array}{ll}
			h_1(p_1(x^s))- h(p(x^s);\lambda^s,\beta_s) &\overset{\eqref{a:hbeta}}{=} h_1(p_1(x^s))-h_1(p_1(x^s)-\beta_s(\lambda_1^{s+1}-\lambda_1^s))-\frac{\beta_s}{2}(\| \lambda^{s+1}\|^2-\|\lambda^s\|^2)\\&\leq L_{h_1} \beta_s \|\lambda^{s+1}-\lambda^s\|+\frac{\beta_s}{2}(\| \lambda^{s}\|^2-\|\lambda^{s+1}\|^2).
			\end{array}
			\end{equation}
			Then we know that
			\begin{align*}
				F(x^s)-L(x^s; x^{s-1}, \lambda^s, \beta_s)&= h_1(p_1(x^s))- h(p(x^s);\lambda^s,\beta_s)-\frac{\beta_s}{2}\|x^{s}-x^{s-1}\|^2\\& \overset{\eqref{eq:fsr}}{\leq} L_{h_1} \beta_s \|\lambda^{s+1}-\lambda^s\|+\frac{\beta_s}{2}(\| \lambda^{s}\|^2-\|\lambda^{s+1}\|^2)-\frac{\beta_s}{2}\|x^{s}-x^{s-1}\|^2.
			\end{align*}
			Since $H_s(\cdot)$ is $\beta_s$-strongly convex, we know that \begin{align*}
				L^\star(x^{s-1}, \lambda^s,\beta_s)&\leq L(x^\star; x^{s-1}, \lambda^s, \beta^s)-\frac{\beta_s}{2}\|x^\star-x^\star(x^{s-1}, \lambda^s, \beta_s)\|^2 \\&  \overset{\eqref{a:dualityhbeta}}{\leq} F^\star +\frac{\beta_s}{2}\|x^\star-x^{s-1}\|^2-\frac{\beta_s}{2}\|x^\star-x^\star(x^{s-1}, \lambda^s, \beta_s)\|^2.
			\end{align*}
			Combining the latter two bounds we get
			\begin{align*}
				F(x^s)-F^\star&\leq L(x^s; x^{s-1}, \lambda^s, \beta_s)-L^\star(x^{s-1}, \lambda^s,\beta_s)+ L_{h_1} \beta_s \|\lambda^{s+1}-\lambda^s\|+\frac{\beta_s}{2}(\| \lambda^{s}\|^2-\|\lambda^{s+1}\|^2)\\
				&\quad +\frac{\beta_s}{2}\|x^\star-x^{s-1}\|^2-\frac{\beta_s}{2}\|x^\star-x^\star(x^{s-1}, \lambda^s, \beta_s)\|^2
				-\frac{\beta_s}{2}\|x^{s}-x^{s-1}\|^2.
			\end{align*}
			Furthermore, by convexity of $h_1(\cdot)$, 
			$$  \inf_x F(x)+ \<\lambda_2^\star, p_2(x)> -h_2^*(\lambda_2^\star) \geq \inf_x f(x)+g(x)+\<\lambda^\star, p(x)>-h^*(\lambda^\star)=D(\lambda^\star).
			$$
			Now we apply the strong duality assumption~\eqref{a:strd} to obtain:
			$$
			F(x^s)+ \<\lambda_2^\star, p_2(x^s)> -h_2^*(\lambda_2^\star)\geq  \inf_x F(x)+ \<\lambda_2^\star, p_2(x)> -h_2^*(\lambda_2^\star) \geq F^\star.
			$$
			Consequently,
			$$
			F(x^s)-F^\star\geq  \<\lambda_2^\star, -p_2(x^s)> + h_2^*(\lambda_2^\star) \geq 
			\sup_v \<\lambda_2^\star, v-p_2(x^s)>-h_2(v) \geq 
			-\|\lambda_2^\star\|\dist(p_2(x^s),\cK).
			$$
			From~\eqref{a:optimalitycondition} we know 
			$$
			p_2(x^s)-\beta_s(\lambda_2^{s+1}-\lambda_2^s)\in \cK,
			$$
			and thus
			$$
			\dist(p_2(x^s), \cK)\leq \beta_s\| \lambda_2^{s+1}-\lambda_2^s\|.
			$$
		\end{proof}
		
		\begin{proof}[proof of Corollary~\ref{coro:them1c}]
			Using Corollary~\ref{coro:bound},  the bounds in Theorem~\ref{rock2} can be relaxed as:
			\begin{align*}
				& \bE[ F(x^s)-F^\star] \leq \epsilon_s+ 2L_{h_1}^2 \beta_s +c_0\beta_s	\\
				&
				\bE[F(x^s)-F^\star]	\geq -\beta_s\|\lambda_2^\star\| \sqrt{c_0},\\
				&	 \bE[\dist(p_2(x^s), K)]\leq \beta_s \sqrt{c_0}.	\end{align*}
			We then conclude by noting that~\eqref{a:sbound} guarantees
			$$
			\max(\epsilon_0+ 2L_{h_1}^2 \beta_0 +c_0\beta_0, \beta_0\|\lambda_2^\star\| \sqrt{c_0}, \beta_0 \sqrt{c_0})\leq \epsilon \rho^{-s}.
			$$
		\end{proof}

		\subsection{Proofs in Section~\ref{sec:overall}}\label{sec:proofoverall}
		
		\begin{proof}[proof of Lemma~\ref{l:erer2}]
			We first bound
			\begin{align*}
				&\bE[ \left((\beta_s+\beta_{s+1})L_{h_1}+\|\beta_s \lambda_1^s-\beta_{s+1}\lambda_1^{s+1}\| \right)^2
				+\|\beta_s \lambda_2^s-\beta_{s+1}\lambda_2^{s+1}\|^2 ]\\&
				\leq 2(\beta_s+\beta_{s+1})^2 L^2_{h_1}+2\bE[ \|\beta_s \lambda^s-\beta_{s+1}\lambda^{s+1}\|^2]\\
				& \leq 2(\beta_s+\beta_{s+1})^2 L^2_{h_1}+4(\beta_s^2+\beta_{s+1}^2) c\\
				&\leq 4(\beta_s^2+\beta_{s+1}^2)(L^2_{h_1}+c).
			\end{align*}
			Since
			$$
			\lambda^{s+1}=\Lambda(p(x^s);\lambda^{s},\beta_{s}),
			$$
			by Lemma~\ref{l::boundbetabetap} we have
			\begin{align}
				&\| \beta_{s+1}\left(\Lambda(p(x^s);\lambda^{s+1},\beta_{s+1})-\lambda^{s+1}\right)-\beta_s(\lambda^{s+1}-\lambda^{s})\| \\&\leq \sqrt{ \left((\beta_s+\beta_{s+1})L_{h_1}+\|\beta_s \lambda_1^s-\beta_{s+1}\lambda_1^{s+1}\| \right)^2
					+\|\beta_s \lambda_2^s-\beta_{s+1}\lambda_2^{s+1}\|^2 }\enspace.
			\end{align}
			Therefore,
			\begin{align*}&
				\| \Lambda(p(x^s);\lambda^{s+1},\beta_{s+1})-\lambda^{s+1}\| \\&\leq  \beta_{s+1}^{-1} \beta_s \|\lambda^{s+1}-\lambda^s\|+  \beta_{s+1}^{-1}\sqrt{ \left((\beta_s+\beta_{s+1})L_{h_1}+\|\beta_s \lambda_1^s-\beta_{s+1}\lambda_1^{s+1}\| \right)^2
					+\|\beta_s \lambda_2^s-\beta_{s+1}\lambda_2^{s+1}\|^2 }.
			\end{align*}
			If follows that
			\begin{align}\label{a:erwe2}
				&\bE[ \| \Lambda(p(x^s);\lambda^{s+1},\beta_{s+1})-\lambda^{s+1}\|^2]\leq  2 \beta_{s+1}^{-2} \beta^2_s c+8\beta_{s+1}^{-2 } (\beta_s^2+\beta_{s+1}^2)(L^2_{h_1}+c)
			\end{align}
			By $\mathbb{E}[XY]\le (\bE[X^2])^{1/2}(\bE[Y^2])^{1/2}$, we get
			\begin{align}\label{a:fer}
				\bE\left[\|\lambda^{s+1}-\lambda^{s}\|\sqrt{ \left((\beta_s+\beta_{s+1})L_{h_1}+\|\beta_s \lambda_1^s-\beta_{s+1}\lambda_1^{s+1}\| \right)^2
					+\|\beta_s \lambda_2^s-\beta_{s+1}\lambda_2^{s+1}\|^2 }\right]\leq   \sqrt{4c(\beta_s^2+\beta_{s+1}^2)(L^2_{h_1}+c)}.
			\end{align}
			Combining~\eqref{a:fgert},~\eqref{a:erwe2} and~\eqref{a:fer}, we then get an upper bound for $\bE[M_{s}]$:
			\begin{equation*}
				\begin{array}{ll}
					\bE[M_{s+1}]&\leq {{\beta_s}}c+
					\frac{\beta_s-\beta_{s+1}}{2}\left( 2 \beta_{s+1}^{-2} \beta^2_s c+8\beta_{s+1}^{-2 } (\beta_s^2+\beta_{s+1}^2)(L^2_{h_1}+c)\right) + \frac{\beta_s^2}{2\beta_{s+1}-\beta_s}c
					+ \sqrt{4c(\beta_s^2+\beta_{s+1}^2)(L^2_{h_1}+c)} \\
					&\leq {{\beta_s}}c+
					\beta_s\left(  \beta_{s+1}^{-2} \beta^2_s c+4\beta_{s+1}^{-2 } (\beta_s^2+\beta_{s+1}^2)(L^2_{h_1}+c)\right) + \frac{\beta_s^2}{2\beta_{s+1}-\beta_s}c
					+ 2 \beta_s \sqrt{c(1+\beta_{s+1}^2\beta_s^{-2})(L^2_{h_1}+c)} \\
					& \leq 2{{\beta_s}}c  +  \beta_s\left(  \beta_{s+1}^{-2} \beta^2_s c+(5+4\beta_{s+1}^{-2 } \beta_s^2+\beta_{s+1}^2 \beta_s^{-2})(L^2_{h_1}+c)\right) + \frac{\beta_s^2}{2\beta_{s+1}-\beta_s}c
				\end{array},
			\end{equation*}
			where the last inequality used $2\sqrt{ab}\leq a+b $ for any $a,b>0$.  Next we plug in $\beta_s=\beta_0\rho^s$ to obtain
			$$
			\bE[M_s]\leq \beta_s \left(2c+\rho^{-2}c+(9+\rho^{-2})(L^2_{h_1}+c)+(2\rho-1)^{-1}c\right) \leq \beta_s \left((11+2\rho^{-2})(L^2_{h_1}+c)+(2\rho-1)^{-1}c\right).
			$$
		\end{proof}
		
		\begin{proof}[proof of Proposition~\ref{prop:maincomplexity}]
			Since Algorithm~\ref{ipALM_m} is a special case of Algorithm~\ref{ipALM} with $\beta_s=\beta_0 \rho^s$ and $\epsilon_s=\epsilon_0 \eta^s$, we know  from  Corollary~\ref{coro:bound} that~\eqref{a:fgert} holds with $c=4c_0$. Applying Lemma~\ref{l:erer2} we know that  
			$$
			\bE[M_s]\leq C\beta_s ,
			$$
			with $C=(11+2\rho^{-2})(L^2_{h_1}+4c_0)+4(2\rho-1)^{-1}c_0$.
			If $m_{s+1}$ is the smallest integer satisfying~\eqref{eq:ms2}, then
			\begin{align}\label{a:ms2}
				m_{s+1}\leq  K_{s+1}\left(\log_2 \left(4\epsilon_s \epsilon^{-1}_{s+1} +  2M_s    \epsilon^{-1}_{s+1}\right)+1\right)+1.
			\end{align}
			By the concavity of $\log_2$ function we get
			$$
			\bE[m_{s+1}] \leq K_{s+1}\left(\log_2 \left(4\epsilon_s \epsilon^{-1}_{s+1} +  2C\beta_s   \epsilon^{-1}_{s+1}\right)+1\right)+1=K_{s+1}\left(\log_2 \left(4 \eta^{-1} +  2C\beta_0  \epsilon_0^{-1}\eta^{-1}\rho^s\eta^{-s}\right)+1\right)+1.
			$$
			Since $\rho>\eta$, we get 
			\begin{align*}
				\bE[m_{s+1}] &\leq K_{s+1}\left(\log_2 \left( \left(4 \eta^{-1} +  2C\beta_0  \epsilon_0^{-1}\eta^{-1}\right)\rho^s\eta^{-s}\right)+1\right)+1\\&=K_{s+1}\left(\log_2 \left( 4 \eta^{-1} +  2C\beta_0 \epsilon_0^{-1}\eta^{-1}\right)+1+\log_2 \left( \rho^s\eta^{-s}\right) \right)+1\\&=K_{s+1}\left(s\log_2\left(\rho\eta^{-1}\right)+c_2\right)+ 1.
			\end{align*}
		\end{proof}

		\subsection{Proofs in Section~\ref{sec:H}}\label{app:pse}
		
		We first state a lemma similar to Lemma~\ref{l:infimalconv}.
		\begin{lemma}\label{l:infimalconv2}
			Let $\psi(\cdot):\R^n\rightarrow\R \cup\{+\infty\}$ 
			be a convex function.  Define:
			$$
			\tilde \psi(w):=\inf_x\{h(p(x)-w)+\psi(x)\},
			$$
			Then condition~\eqref{a:herffg} ensures the convexity of   $\tilde \psi$.
		\end{lemma}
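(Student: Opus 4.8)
The plan is to mirror the argument used for Lemma~\ref{l:infimalconv}, but exchanging the roles of the two variables: here the convex combination must be taken in the $w$-argument rather than inside $p(\cdot)$. First I would fix $w,w'\in\R^d$ and $\alpha\in(0,1)$ (the endpoints $\alpha\in\{0,1\}$ being trivial) and expand $\tilde\psi(\alpha w+(1-\alpha)w')$ straight from its definition, namely as $\inf_z\{h(p(z)-\alpha w-(1-\alpha)w')+\psi(z)\}$.

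The crucial step is a reparametrization of this infimum. Since $(x,y)\mapsto \alpha x+(1-\alpha)y$ is surjective onto $\R^n$ for $\alpha\in(0,1)$, I can rewrite $\inf_z$ as the double infimum $\inf_{x,y}$ evaluated at $z=\alpha x+(1-\alpha)y$, without changing the value. This is exactly the move that makes condition~\eqref{a:herffg} applicable: taking $u=w$ and $v=w'$ there yields $h\bigl(p(\alpha x+(1-\alpha)y)-\alpha w-(1-\alpha)w'\bigr)\le \alpha h(p(x)-w)+(1-\alpha)h(p(y)-w')$, while convexity of $\psi$ gives $\psi(\alpha x+(1-\alpha)y)\le \alpha\psi(x)+(1-\alpha)\psi(y)$.

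Combining the two inequalities inside the double infimum, the integrand is bounded by a sum of a term depending only on $(x,w)$ and a term depending only on $(y,w')$. Because these two groups of variables are independent, the infimum over $(x,y)$ separates, giving $\alpha\inf_x\{h(p(x)-w)+\psi(x)\}+(1-\alpha)\inf_y\{h(p(y)-w')+\psi(y)\}=\alpha\tilde\psi(w)+(1-\alpha)\tilde\psi(w')$, which is the desired convexity inequality. The only point that deserves an explicit word—and the closest thing to an obstacle—is the reparametrization of the infimum, i.e.\ checking that ranging over all decompositions $z=\alpha x+(1-\alpha)y$ leaves the infimum unchanged; this follows at once from surjectivity of the averaging map, so the whole argument is a direct transposition of the proof of Lemma~\ref{l:infimalconv}.
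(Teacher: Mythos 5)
Your proposal is correct and is precisely the transposition of the proof of Lemma~\ref{l:infimalconv} that the paper itself invokes (its proof of Lemma~\ref{l:infimalconv2} simply says it is "similar to Lemma~\ref{l:infimalconv}"): you fix $u=w$, $v=w'$ in condition~\eqref{a:herffg}, use convexity of $\psi$ in the $x$-variable, and split the infimum. The only remark worth adding is that the reparametrization step needs even less than surjectivity: since each point $\alpha x+(1-\alpha)y$ is an admissible $z$ in the infimum defining $\tilde\psi(\alpha w+(1-\alpha)w')$, the one-sided bound $\tilde\psi(\alpha w+(1-\alpha)w')\le h\bigl(p(\alpha x+(1-\alpha)y)-\alpha w-(1-\alpha)w'\bigr)+\psi(\alpha x+(1-\alpha)y)$ already suffices before taking infima over $x$ and $y$.
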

		\begin{proof}
			The proof is similar to Lemma~\ref{l:infimalconv} in Appendix~\ref{app:pALM}.  
		\end{proof}  
		
		\begin{proof}[proof of Lemma~\ref{l:Lxbound2}]
			By the definitions~\eqref{a:pAL},~\eqref{a:hbetadef} and~\eqref{a:Lambda}, we have
			\begin{align*}
				&L(x;y, \lambda,\beta)-L(x;y',\lambda',\beta')+\frac{\beta}{2}\| \Lambda(p(x);\lambda,\beta)-\lambda\|^2-\frac{\beta'}{2}\| \Lambda(p(x);\lambda',\beta')-\lambda'\|^2
				\\&=\< \Lambda(p(x);\lambda,\beta)-\Lambda(p(x);\lambda',\beta'), p(x) >- h^*( \Lambda(p(x);\lambda,\beta))+
				h^*( \Lambda(p(x);\lambda',\beta'))+\frac{\beta}{2}\|x-y\|^2-\frac{\beta'}{2}\|x-y'\|^2.
			\end{align*}
			Next we apply~\eqref{a:optimalitycondition} to get
			$$
			h^*( \Lambda(p(x);\lambda',\beta')) \geq h^*( \Lambda(p(x);\lambda,\beta))+\<\Lambda(p(x);\lambda,\beta)-\Lambda(p(x);\lambda',\beta') ,\beta(\Lambda(p(x);\lambda, \beta)-\lambda)-p(x)>,
			$$
			and
			$$
			h^*( \Lambda(p(x);\lambda,\beta)) \geq h^*( \Lambda(p(x);\lambda',\beta'))+\<\Lambda(p(x);\lambda,\beta)-\Lambda(p(x);\lambda',\beta') ,p(x)-\beta'(\Lambda(p(x);\lambda', \beta')-\lambda')>.
			$$
		\end{proof}

		\begin{proof}[proof of Lemma~\ref{l:sL2}]
			In this proof we fix $y\in \R^n$, $\lambda\in \R^d$ and $\beta>0$. Recall the definitions in~\eqref{a:erdfgtff}.
			Define
			$$
			L(x, w; y, \lambda,\beta):=f(x)+g(x)+ h(p(x)-w)+\frac{1}{2\beta}\|w\|^2+\<w,\lambda>+\frac{\beta}{2}\|x-y\|^2-\frac{\beta}{2}\|x-x^\star(y,\lambda,\beta)\|^2.
			$$
			Then by~\eqref{a:dualityhbeta}, 
			\begin{align}\label{a:wrfff}
				\min_w L(x, w; y, \lambda,\beta)=
				L(x;y,\lambda,\beta)-\frac{\beta}{2}\|x-x^\star(y,\lambda,\beta)\|^2.
			\end{align}
			Since $L(x;y,\lambda,\beta)-\frac{\beta}{2}\|x-x^\star(y,\lambda,\beta)\|^2$ is a convex function with $x^\star(y,\lambda,\beta)$ being a critical point, 
			it follows that
			\begin{align}\label{a:poi}
				\min_x \min_w L(x, w; y, \lambda,\beta)=L^\star(y, \lambda,\beta).
			\end{align}
			Denote
			\begin{align}\label{a:Hdef}
				H(w;y, \lambda,\beta):=\min_x L(x, w;y, \lambda,\beta).
			\end{align}
			In view of~\eqref{a:wstar},
			\begin{align}\label{a:rtdffrtr}
				L(x;y, \lambda,\beta)-\frac{\beta}{2}\|x-x^\star(y,\lambda,\beta)\|^2=L(x, \beta(\Lambda(p(x); \lambda,\beta)-\lambda); y,\lambda,\beta)\overset{\eqref{a:Hdef}}{\geq} H(\beta(\Lambda(p(x);\lambda,\beta)-\lambda); y,\lambda,\beta).
			\end{align}
			Note that
			\begin{align}\label{a:efwq}
				\min_w H(w;y,\lambda,\beta)=\min_w \min_x L(x,w;y,\lambda,\beta)=\min_x \min_w L(x,w;y,\lambda,\beta)\overset{\eqref{a:poi}}{=}L^\star(y,\lambda,\beta).
			\end{align}
			Denote $\Lambda^\star(y,\lambda,\beta)=\Lambda(p^\star(y,\lambda,\beta);  \lambda,\beta)$. It follows that,
			$$
			H( \beta(\Lambda^\star(y,\lambda,\beta)-\lambda);y,\lambda,\beta) \geq \min_w H(w;y,\lambda,\beta)\overset{\eqref{a:efwq}}{=}L^\star(y,\lambda,\beta)=L(x^\star( y,\lambda,\beta); y,\lambda,\beta).
			$$
			Using again~\eqref{a:rtdffrtr} with $x=x^\star( y,\lambda,\beta)$ we deduce 
			\begin{align}\label{a:rqqsf}
				H( \beta(\Lambda^\star(y,\lambda,\beta)-\lambda);y,\lambda,\beta) = \min_w H(w;y,\lambda,\beta).
			\end{align}
			Moreover, it follows  from Lemma~\ref{l:infimalconv2} that $H( w;y, \lambda,\beta)$ is $1/\beta$-strongly convex with respect to $w$. Thus, 
			\begin{align*}
				L(x;y,\lambda,\beta)-L^\star(y,\lambda,\beta)-\frac{\beta}{2}\|x-x^\star(y,\lambda,\beta)\|^2
				&\overset{\eqref{a:rtdffrtr}+\eqref{a:efwq}}{\geq} H(\beta(\Lambda(p(x);\lambda,\beta)-\lambda);y,\lambda,\beta)-\min_w H(w;y,\lambda,\beta) \\
				&\overset{\eqref{a:rqqsf}}{\geq} \frac{1}{2\beta}\|\beta(\Lambda(p(x);\lambda,\beta)-\lambda)-\beta(\Lambda^\star(y,\lambda,\beta)-\lambda) \|^2\\
				&=\frac{\beta}{2} \| \Lambda(p(x);\lambda,\beta)-
				\Lambda^\star(y,\lambda,\beta)\|^2.
			\end{align*}
		\end{proof}
		\begin{proof}[proof of Lemma~\ref{l::boundbetabetap}]
			Denote
			\begin{align}\label{a:Lambdai}
				\Lambda_i(u_i;\lambda_i, \beta):=\arg\max_{\xi_i}\left\{\<\xi_i, u_i>-h_i^*(\xi_i)- \frac{\beta}{2}\|\xi_i-\lambda_i\|^2 \right\},
				\enspace i=1,2,
			\end{align}
			so that $\Lambda(u;\lambda, \beta)=\left(\Lambda_1(u_1;\lambda_1, \beta); \Lambda_{2}(u_{2};\lambda_{2}, \beta)\right)$.
			We can then decompose~\eqref{a:optimalitycondition} into two independent conditions:
			\begin{align}\label{a:optimalityconditionblock}
				\Lambda_i(u_i;\lambda_i,\beta)\in\partial h_i(u_i-\beta(\Lambda_i(u_i;\lambda_i, \beta)-\lambda_i)),\enspace i=1,2.
			\end{align}
			By condition~\ref{ass:hp1} in Assumption~\ref{ass:handp}, 
			\begin{align}\label{aseff}
				\| \Lambda_1(u_1;\lambda_1, \beta)\|\leq L_{h_1} \end{align}
			which yields directly
			\begin{align}\label{a:lff}
				\|\beta(\Lambda_1(u_1;\lambda_1, \beta)-\lambda_1)-\beta'(\Lambda_1(u_1;\lambda_1', \beta')-\lambda_1')\|
				\leq (\beta+\beta')L_{h_1}+\|\beta \lambda_1-\beta'\lambda_1'\|.
			\end{align}
			On the other hand,  since $h_2$ is an indicator function, $\partial h_2$ is a cone and~\eqref{a:optimalityconditionblock}
			implies
			\begin{align}\label{a:optimalityconditionblockcone}
				\beta\Lambda_2(u_2;\lambda_2,\beta)\in\partial h_2(u_2-\beta(\Lambda_2(u_2;\lambda_2, \beta)-\lambda_2)).
			\end{align}
			The latter condition further leads to
			$$
			\<\beta\Lambda_2(u_2;\lambda_2,\beta)-\beta'\Lambda_2(u_2;\lambda'_2,\beta'),\beta(\Lambda_2(u_2;\lambda_2, \beta)-\lambda_2)-\beta'(\Lambda_2(u_2;\lambda_2', \beta')-\lambda_2')>\leq 0,
			$$
			which  by Cauchy-Schwartz inequality implies
			$$
			\|\beta(\Lambda_2(u_2;\lambda_2,\beta)-\lambda_2)-\beta'(\Lambda_2(u_2;\lambda'_2,\beta')-\lambda_2')\|
			\leq \|\beta \lambda_2-\beta'\lambda_2'\|.
			$$
			Then~\eqref{a:boundbetabetap} is obtained by simple algebra.	
		\end{proof}
		
		\begin{proof}[proof of Lemma~\ref{l:erfgrte}]
			We first recall the following basic inequality:
			\begin{align}\label{a:bascc}
				\| u+v\|^2\leq (1+a)\|u\|^2+(1+1/a)\|v\|^2,\enspace \forall u,v\in \R^n, a>0.
			\end{align}
			In view of~\eqref{a:bascc} and the fact that $\beta'>\beta/2$, we know that
			\begin{align*}
				&	-\frac{\beta}{2}\| w'-w\|^2\leq \frac{\beta}{2}\|w-y'\|^2-\frac{\beta}{4}\|w'-y'\|^2,\\
				&  -\frac{\beta'+\beta/2}{2}\|w'-y'\|^2\leq  \frac{\beta(2\beta'+\beta)}{2(2\beta'-\beta)}\|y-y'\|^2 -\frac{\beta}{2}\|w'-y\|^2.
			\end{align*}
			Combining the latter two inequalities we get~\eqref{a:dger}.
		\end{proof}

		\subsection{Proof in Section~\ref{sec:li} }
		\begin{proof}[proof of Corollary~\ref{coro:lAPG}]
			If $K_s$ satisfies~\eqref{a:KsAPG}, then 
			$$
			K_s\leq 2\sqrt{\frac{2(L\beta_0+\|A\|^2)}{\mu_g\beta_s+\beta^2_s}}+1\leq
			\left\{ \begin{array}{ll}
			\frac{2\sqrt{2(L\beta_0+\|A\|^2)/\mu_g}}{ \sqrt{\beta_s}}+1  & \mathrm{if} ~\mu_g>0 \\
			\frac{2\sqrt{2(L\beta_0+\|A\|^2)}}{{\beta_s}}+1  & \mathrm{if} ~\mu_g=0
			\end{array}\right. 
			$$
			We then apply Corollary~\ref{coro:main}. 
		\end{proof}
		
		The proof of Corollary~\ref{coro:LL} and \ref{coro:BPG} are similar.
		\subsection{Proofs in Section~\ref{sec:nonlinear}}
		We first state a useful Lemma.
		\begin{lemma}
			For any $u,\lambda\in\R^d$, $\beta>0$,
			\begin{align}\label{a:ewssss}
				\|\Lambda(u;\lambda,\beta)\|\leq L_{h_1}+ \beta^{-1}\dist(u_2+\beta\lambda_2, \cK)
			\end{align}
		\end{lemma}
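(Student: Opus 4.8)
The plan is to exploit the block-separable structure of $h=h_1+h_2$. Since $h((u_1;u_2))=h_1(u_1)+h_2(u_2)$, the Fenchel conjugate splits as $h^*=h_1^*+h_2^*$ and the maximization defining $\Lambda(u;\lambda,\beta)$ in~\eqref{a:Lambda} decouples into the two independent blocks $\Lambda_1(u_1;\lambda_1,\beta)$ and $\Lambda_2(u_2;\lambda_2,\beta)$ of~\eqref{a:Lambdai}, so that $\Norm{\Lambda(u;\lambda,\beta)}^2=\Norm{\Lambda_1(u_1;\lambda_1,\beta)}^2+\Norm{\Lambda_2(u_2;\lambda_2,\beta)}^2$. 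First I would bound the two blocks separately and then recombine.

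For the first block, the optimality condition~\eqref{a:optimalityconditionblock} gives $\Lambda_1(u_1;\lambda_1,\beta)\in\partial h_1(\cdot)$; since $h_1$ is $L_{h_1}$-Lipschitz by condition~\ref{ass:hp1} in Assumption~\ref{ass:handp}, every subgradient of $h_1$ has norm at most $L_{h_1}$, hence $\Norm{\Lambda_1(u_1;\lambda_1,\beta)}\le L_{h_1}$, which is exactly~\eqref{aseff}. For the second block I would instead compute $\Lambda_2$ in closed form. Applying the primal representation~\eqref{a:dualityhbeta} to $h_2=\iota_\cK$ (the indicator of $\cK$), the second-block inner problem is $\min_{w_2}\{\iota_\cK(u_2-w_2)+\tfrac{1}{2\beta}\Norm{w_2}^2+\langle w_2,\lambda_2\rangle\}$; substituting $z=u_2-w_2$ and completing the square turns this into $\min_{z\in\cK}\tfrac{1}{2\beta}\Norm{z-(u_2+\beta\lambda_2)}^2$, whose minimizer is the Euclidean projection $z^\star=P_\cK(u_2+\beta\lambda_2)$. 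By~\eqref{a:wstar} the optimal $w_2^\star=u_2-z^\star$ equals $\beta(\Lambda_2(u_2;\lambda_2,\beta)-\lambda_2)$, which rearranges to $\Lambda_2(u_2;\lambda_2,\beta)=\beta^{-1}\bigl((u_2+\beta\lambda_2)-P_\cK(u_2+\beta\lambda_2)\bigr)$, and taking norms yields $\Norm{\Lambda_2(u_2;\lambda_2,\beta)}=\beta^{-1}\dist(u_2+\beta\lambda_2,\cK)$.

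Finally I would combine the two block bounds through the elementary inequality $\sqrt{a^2+b^2}\le a+b$ valid for $a,b\ge 0$, giving $\Norm{\Lambda(u;\lambda,\beta)}=\sqrt{\Norm{\Lambda_1}^2+\Norm{\Lambda_2}^2}\le\sqrt{L_{h_1}^2+\beta^{-2}\dist(u_2+\beta\lambda_2,\cK)^2}\le L_{h_1}+\beta^{-1}\dist(u_2+\beta\lambda_2,\cK)$, which is precisely~\eqref{a:ewssss}. The only mildly delicate step is recognizing that the second-block inner minimization is a projection onto $\cK$; once the block decomposition and this identification are in place, the rest is routine. An alternative to the projection computation would be to invoke the optimality condition~\eqref{a:optimalityconditionblockcone} for the cone-valued subdifferential of $h_2^*$ directly, but the projection viewpoint gives the exact value of $\Norm{\Lambda_2}$ with the least effort.
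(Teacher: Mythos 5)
Your proposal is correct and follows essentially the same route as the paper: block-decompose $\Lambda=(\Lambda_1;\Lambda_2)$, bound $\|\Lambda_1\|\le L_{h_1}$ via~\eqref{aseff}, identify the second-block problem in~\eqref{a:dualityhbeta} (after the change of variables $z=u_2-w_2$) as a Euclidean projection onto $\cK$ so that $\beta\|\Lambda_2\|=\dist(u_2+\beta\lambda_2,\cK)$, and combine. The paper's proof is just a terser version of this same argument, stating the shifted form of~\eqref{a:dualityhbeta} with optimal $z^*=u+\beta\lambda-\beta\Lambda(u;\lambda,\beta)$ and reading off the projection identity directly.
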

		\begin{proof}
			From~\eqref{a:dualityhbeta}, 
			\begin{align}\label{a:dualityhbeta34}
				& h(u;\lambda,\beta)=\min_z\left\{ h(z)+\frac{1}{2\beta}\|u+\beta \lambda-z\|^2-\frac{\beta}{2}\|\lambda\|^2 \right\} 
			\end{align}
			with optimal solution 
			$$
			z^*=u+\beta\lambda-\beta \Lambda(u;\lambda,\beta).
			$$
			In particular, $\dist(u_2+\beta \lambda_2, \cK)^2=\beta^2\|\Lambda_2(u_2;\lambda_2,\beta)\|^2$.   Together with~\eqref{aseff} we obtain the desired bound.
		\end{proof}
		\begin{proof}[proof of Lemma~\ref{l:ersdgg}]
			\begin{align*}&
				\| \nabla p(x) \Lambda(p(x);\lambda^s,\beta_s)-\nabla p(y) \Lambda(p(y);\lambda^s,\beta_s) \|
				\\& \leq \|\nabla p(x)-\nabla p(y)\|\|  \Lambda(p(x);\lambda^s,\beta_s)\|
				+\|\nabla p(y)\|\|\Lambda(p(x);\lambda^s,\beta_s)-\Lambda(p(y);\lambda^s,\beta_s) \|
				\\& \overset{\eqref{a:ewssss}+\eqref{a:nablahLip}}{\leq} L_{\nabla p} \|x-y\| \left(L_{h_1}+ \beta_s^{-1}\dist(p_2(x)+\beta_s\lambda^s_2, \cK)\right)+ M_{\nabla p}\| p(x)-p(y)\|\beta_s^{-1}
				\\&  \leq \left(L_{\nabla p} \left(L_{h_1}+ \beta_s^{-1}\dist(p_2(x)+\beta_s\lambda^s_2, \cK)\right)+ M^2_{\nabla p}\beta_s^{-1}\right) \|x-y\|.
			\end{align*}
			Note that by~\eqref{a:ererrrrewr} and the definition of $d_s$,
			$$
			\dist(p_2(x)+\beta_s\lambda_2^s, \cK)\leq d_s.
			$$
		\end{proof}
		
		\subsection{Proofs in Section~\ref{sec:kkt}}
		
		\begin{proof}[proof of Theorem~\ref{thm:kkt2}]
			We know from the basic property of proximal gradient step~\cite{ne07} that
			$$
			\|x^{s} -\tilde x^s\|^2  \leq 2\left(H_s(\tilde x^s)-H_s^\star\right)/L_{s}.
			$$
			By Line 4 in Algorithm~\ref{ipALM_m_kkt},
			$$
			0\in \nabla \phi_s(\tilde x^s)+L_{s}(x^s-\tilde x^s)+{\beta_s}( x^s-x^{s-1})+\partial g(x^s).
			$$
			Therefore,
			\begin{align*}
				\dist(0, \nabla \phi_s( x^s)+\partial g(x^s))&\leq L_{s} \|\tilde x^s-x^s\|+\|\nabla \phi_s(x^s)-\nabla \phi_s(\tilde x^s)\|+\beta_s \| x^s-x^{s-1} \| \\&\leq 2L_{s}\|\tilde x^s-x^s\|+\beta_s \|x^s-x^{s-1} \|
			\end{align*}
			Combining the last two bounds and~\eqref{a:nablehu} we get $\nabla \phi_s( x^s)= \nabla f( x^s)+\nabla p(x^s) \lambda^{s+1}$ and	$$
			\dist(0, \nabla f( x^s)+\nabla p(x^s) \lambda^{s+1}+\partial g( x^{s}))^2\leq 16 L_{s} \left(H_s(\tilde x^s)-H_s^\star\right)+2\beta_s^2\|x^s-x^{s-1}\|^2.
			$$
			Secondly we know from~\eqref{a:optimalitycondition} that
			\begin{align*}
				p(x^s)-\beta_s(\lambda^{s+1}-\lambda^s)\in \partial h^*(\lambda^{s+1}).
			\end{align*}
			It follows that 
			$$
			\dist(0, p(x^s)-\partial h^*(\lambda^{s+1}))\leq \beta_s \| \lambda^{s+1}-\lambda^s\|.
			$$
		\end{proof}

		\begin{proof}[proof of Corollary~\ref{coro:kkt}]
			Due to~\eqref{a:rttddd}, we can have the same  bound (in expectation) of the sequence $\{(\tilde x^s, x^s, \lambda^s)\}$ as Corollary~\ref{coro:bound}. Hence,
			\begin{align*}
				&\bE\left[\dist(0,\partial_x L(x^s, \lambda^{s+1}))  \right]\leq \sqrt{16 L_{s}\epsilon_s+8c_0\beta_s^2}\leq \sqrt{16\gamma\epsilon_0/\beta_0+8c_0\beta_0}\rho^s,\\
				&\bE\left[\dist(0,\partial_{\lambda}L(x^s, \lambda^{s+1}))  \right] \leq \beta_0\sqrt{c_0} \rho^s.
			\end{align*}
		\end{proof}

	\end{appendix}

	\bibliographystyle{abbrv}
	\bibliography{paper_ref}

\end{document}